\newtheoremstyle{plain2}{\topsep}{\topsep}%
     {\itshape}
     {}
     {\bfseries}
     {.}
     {.5em}
     {\thmnumber{(#2)}\thmname{ #1}\thmnote{ #3}}
\theoremstyle{plain2}
\newtheorem{theorem}{Theorem}[section]
\newtheorem{proposition}[theorem]{Proposition}
\newtheorem{corollary}[theorem]{Corollary}
\newtheorem{lemma}[theorem]{Lemma}
\newtheoremstyle{definition2}{\topsep}{\topsep}%
     {}
     {}
     {\bfseries}
     {.}
     {.5em}
     {\thmnumber{(#2)}\thmname{ #1}\thmnote{ #3}}
\theoremstyle{definition2}
\newtheorem{remark}[theorem]{Remark}
\newtheorem{definition}[theorem]{Definition}
\def\R{\mathbb{R}}
\def\C{\mathbb{C}}
\def\spann{\mathrm{span}}
\def\Ker{\mathrm{Ker}}
\def\a{\alpha}
\def\m{\mu}
\def\l{\lambda}
\def\Om{\Omega}
\def\eps{\varepsilon}
\def\vphi{\varphi}
\def\spc{H^1_0(\Omega)}
\def\Kwong{Z_{N,p}}
\def\gslev{{{e}}}
\def\gsset{{\mathcal{P}}}
\def\Lcal{{\mathcal{L}}}
\title{Existence and orbital stability of the ground states with prescribed mass for the $L^2$-critical and supercritical NLS on bounded domains
}
\author{Benedetta Noris, Hugo Tavares and Gianmaria Verzini}
\begin{document}

\maketitle

\begin{abstract}
Given $\rho>0$, we study the elliptic problem
\[
\text{find } (U,\lambda)\in H^1_0(B_1)\times \R
\text{ such that }
\begin{cases}
-\Delta U+\lambda U=U^p\\
\int_{B_1} U^2\, dx=\rho,\quad U>0,
\end{cases}
\]
where $B_1\subset\R^N$ is the unitary ball and $p$ is Sobolev-subcritical.
Such problem arises in the search of solitary wave solutions for nonlinear Schr\"odinger equations
(NLS) with power nonlinearity on bounded domains. Necessary and sufficient conditions (about
$\rho$, $N$ and $p$) are provided for the existence of solutions. Moreover, we show that standing
waves associated to least energy solutions are orbitally stable for every $\rho$ (in the existence range) when $p$ is
$L^2$-critical and subcritical, i.e. $1<p\leq1+4/N$,  while they are stable for almost every $\rho$
in the $L^2$-supercritical regime $1+4/N<p<2^*-1$. The proofs
are obtained in connection with the study of a variational problem with two constraints, of
independent interest: to maximize the $L^{p+1}$-norm among functions having prescribed $L^2$ and
$H^1_0$-norm.
\end{abstract}

\section{Introduction}

In this paper we study standing wave solutions of the following nonlinear Schr\"o\-dinger equation (NLS)
\begin{equation}\label{eq:Schrodinger_intro}
\left\{
\begin{array}{ll}
i\frac{\partial \Phi}{\partial t}+\Delta \Phi+ |\Phi|^{p-1}\Phi=0  &
(t,x)\in \R\times B_1\\
\Phi(t,x)=0 &(t,x)\in \R\times \partial B_1,
\end{array}
\right.
\end{equation}
$B_1$ being the unitary ball of $\R^N$, $N\geq1$, and $1<p<2^*-1$, where $2^*=\infty$ if $N=1,2$,
and $2^*=2N/(N-2)$ otherwise. In what follows, $p$ is always subcritical for the Sobolev immersion,
while criticality will be understood in the $L^2$-sense, see below.

NLS on bounded domains appear in different physical contexts. For instance, in nonlinear optics,
with $N=2$, $p=3$ they describe the propagation of laser beams in hollow-core fibers
\cite{agrawal2000, FibichMerle2001}. In Bose-Einstein condensation, when $N\leq 3$ and $p=3$, they
model the presence of an infinite well trapping potential \cite{BartschParnet}. When considered in
the whole space $\R^N$, this equation admits the $L^2$-critical exponent $p=1+4/N$; indeed, in the
subcritical case $1<p<1+4/N$, ground state solutions are orbitally stable, while in the critical and
supercritical one they are always unstable \cite{CazenaveLions1982,Cazenave2003}. Notice that the
exponent $p=3$ is subcritical when $N=1$, critical when $N=2$ and supercritical when $N=3$. In the
case of a bounded domain, few papers analyze the effect of  boundary conditions on stability, namely
the one by Fibich and Merle \cite{FibichMerle2001}, and the more recent \cite{Fukuizumi2012} by
Fukuizumi, Selem and Kikuchi. In these papers, it is proved that also in the critical and
supercritical cases there exist standing waves which are orbitally stable (even though a full
classification is not provided, even in the subcritical range). This shows that the presence of the
boundary has a stabilizing effect.

As it is well known, two quantities are conserved along trajectories of \eqref{eq:Schrodinger_intro}: the energy
\[
\mathcal{E}(\Phi)=\int_{B_1} \left(\frac{1}{2}|\nabla \Phi|^2-\frac{1}{p+1}|\Phi|^{p+1}\right)\, dx
\]
and the mass
\[
\mathcal{Q}(\Phi)=\int_{B_1} |\Phi|^2\, dx.
\]
A standing wave is a solution of the form $\Phi(t,x)=e^{i\lambda t}U(x)$, where the real valued function $U$ solves the elliptic problem
\begin{equation}\label{eq:stationary_intro}
\left\{
\begin{array}{ll}
-\Delta U+\lambda U=|U|^{p-1}U& \text{ in }  B_1\\
U=0 &\text{ on } \partial B_1.
\end{array}
\right.
\end{equation}
In \eqref{eq:stationary_intro}, one can either consider the chemical potential $\lambda\in\R$ to be given, or to be an unknown of the problem. In the latter case, it is natural to prescribe the value of the mass, so that $\lambda$ can be interpreted as a Lagrange multiplier.

Among all possible standing waves, typically the most relevant are ground state solutions. In the literature, the two points of view mentioned above lead to different definitions of ground state, see for instance \cite{AdamiNojaVisciglia2012}. When $\lambda$ is prescribed, ground states can be defined as minimizers of the action functional
\[
\mathcal{A}_\lambda(U)=\mathcal{E}(U)+\lambda \mathcal{Q}(U)
\]
among its nontrivial critical points (recall that $\mathcal{A}_\lambda$ is not bounded from below), see for instance \cite[p. 316]{BerestyckiLions1983}. Equivalently, they can be defined as minimizers of $\mathcal{A}_\lambda$ on the associated Nehari manifold. Even though these solutions of \eqref{eq:stationary_intro} are sometimes called least energy solutions, we will refer to them as \emph{least action solutions}. In case $\lambda$ is not given, one may define the ground states as the minimizers of $\mathcal{E}$ under the mass constraint $\mathcal{Q}(U)=\rho$, for some prescribed $\rho>0$ \cite[p. 555]{CazenaveLions1982}.
It is worth noticing that this second definition is fully consistent only in the subcritical case
\[
p<1+\frac{4}{N}
\]
since in the supercritical case $\mathcal{E}|_{\{\mathcal{Q}=\rho\}}$ is unbounded from below
\cite{Cazenave2003}, see also Appendix \ref{app:A}.
\begin{remark}\label{rem:kwong}
When working on the whole space $\R^N$, the two points of view above are in some sense equivalent.
Indeed, in such situation it is well known \cite{Kwong1989} that the problem
\begin{equation*}
-\Delta Z+Z=Z^p, \qquad Z\in H^1(\R^N),\ Z>0,
\end{equation*}
admits a solution $\Kwong$ which is unique (up to translations), radial and decreasing in $r$.
Therefore both the problem with fixed mass and the one with given chemical potential can be
uniquely solved in terms of a suitable scaling of $\Kwong$.
\end{remark}

When working on bounded domains, the two papers \cite{FibichMerle2001,Fukuizumi2012} mentioned above
deal with least action solutions. In this paper, we make a first attempt to study the case of
prescribed mass. Since we consider $p$ also in the critical and supercritical ranges, we have to
restrict the minimization process to constrained critical points of $\mathcal{E}$.

\begin{definition}\label{defi:les}
Let $\rho>0$. A positive solution of \eqref{eq:stationary_intro} with prescribed $L^2$-mass $\rho$ is a positive critical point of $\mathcal{E}|_{\{\mathcal{Q}=\rho\}}$, that is an element of the set
\[
\mathcal{P}_\rho=\left\{ U\in H^1_0(B_1):\ \mathcal{Q}(U)=\rho, \ U>0, \ \exists\lambda:\ -\Delta U+\lambda U=U^p \right\}.
\]
A positive \emph{least energy solution} is a minimizer of the problem
\[
e_\rho = \inf_{\mathcal{P}_\rho} \mathcal{E}.
\]
\end{definition}

\begin{remark}
When $p$ is subcritical, as we mentioned, the above procedure is equivalent to the minimization
of $\mathcal{E}|_{\{\mathcal{Q}=\rho\}}$ with no further constraint.
On the other hand, when $p$ is supercritical, the set $\mathcal{P}_\rho$ on which the
minimization is settled may be strongly irregular.
Contrarily to what happens for least action solutions, no natural Nehari manifold seems to be
associated to least energy solutions. Furthermore, since we work on a bounded
domain, the dependence of $\mathcal{P}_\rho$ on $\rho$ can not be understood in terms of dilations.
As a consequence, no regularized version of the minimization problem defined above seems available.
\end{remark}
\begin{remark}
Since $\mathcal{A}_\lambda$ and the corresponding Nehari manifold are even, it is immediate to see
that least action solutions do not change sign, so that they can be chosen to be positive.
On the other hand, since $U\in\mathcal{P}_\rho$ does not necessarily imply $|U|\in\mathcal{P}_\rho$,
in the previous definition we require the positivity of $U$. Nonetheless, this condition can be
removed in some cases, for instance when $p$ is subcritical, or when it is critical and $\rho$ is
small (see also Remark \ref{rem:segno_scontato}).
\end{remark}
Our main results deal with the existence and orbital stability of the least energy solutions of
\eqref{eq:stationary_intro}.
\begin{theorem}\label{thm:intro_existence}
Under the above notations, the following holds.
\begin{enumerate}
\item If $1<p<1+4/N$ then, for every $\rho>0$, the set $\mathcal{P}_\rho$ has a unique element, which achieves $e_\rho$;
\item if $p=1+4/N$, for $0<\rho< \|\Kwong\|_{L^2(\R^N)}^2$, the set $\mathcal{P}_\rho$ has a unique element, which achieves $e_\rho$; for $\rho\geq \|\Kwong\|_{L^2(\R^N)}^2$, we have $\mathcal{P}_\rho=\emptyset$ ;
\item if $1+4/N<p<2^*-1$, there exists $\rho^\ast>0$ such that $e_\rho$ is achieved if and only if $0<\rho\leq \rho^\ast$. Moreover, $\mathcal{P}_\rho=\emptyset$ for $\rho>\rho^\ast$, whereas
\[
\# \mathcal{P}_\rho\geq 2 \quad \text{ for } 0<\rho<\rho^\ast.
\]
In this latter case, $\mathcal{P}_\rho$ contains positive solutions of \eqref{eq:stationary_intro} which are not least energy solutions.
\end{enumerate}
\end{theorem}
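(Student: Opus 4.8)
The plan is to reduce the whole statement to the study of a single scalar function — the $L^2$-mass of the (unique) positive solution of \eqref{eq:stationary_intro} as a function of the chemical potential $\lambda$ — and to carry out that study with the help of the doubly constrained maximization announced in the abstract.

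\textbf{Step 1 (reduction to a curve).} Testing $-\Delta U+\lambda U=U^p$ against the first Dirichlet eigenfunction of $B_1$ shows that a positive solution can exist only if $\lambda>-\lambda_1(B_1)$; conversely, for every such $\lambda$ a positive solution exists by standard variational methods, since $p<2^*-1$. By Gidas--Ni--Nirenberg such a solution is radial and radially decreasing, and a uniqueness theorem for positive radial solutions of this semilinear Dirichlet problem on a ball (in the spirit of \cite{Kwong1989}) shows it is unique; call it $U_\lambda$, which depends smoothly on $\lambda$ by elliptic regularity and the implicit function theorem. Consequently
\[
\mathcal{P}_\rho=\bigl\{U_\lambda:\ m(\lambda)=\rho\bigr\},\qquad m(\lambda):=\|U_\lambda\|_{L^2(B_1)}^2,
\]
and $m$ is a continuous (indeed smooth) function on $(-\lambda_1(B_1),+\infty)$ with values in $(0,+\infty)$; the whole theorem becomes a statement about the graph of $m$ and about the behaviour of $\lambda\mapsto\mathcal{E}(U_\lambda)$ along it.

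\textbf{Step 2 (endpoint asymptotics and the doubly constrained problem).} As $\lambda\to-\lambda_1(B_1)^+$, $U_\lambda$ bifurcates from the first eigenfunction and $m(\lambda)\to0$. As $\lambda\to+\infty$, the rescaling $U_\lambda(x)=\lambda^{1/(p-1)}W_\lambda(\sqrt{\lambda}\,x)$ turns \eqref{eq:stationary_intro} into $-\Delta W_\lambda+W_\lambda=W_\lambda^p$ on the dilating ball $B_{\sqrt{\lambda}}$; one shows $W_\lambda\to\Kwong$ and, because $\Kwong$ restricted to $B_{\sqrt{\lambda}}$ is a positive supersolution of that Dirichlet problem, $W_\lambda<\Kwong$ there. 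Since
\[
m(\lambda)=\lambda^{\frac{2}{p-1}-\frac{N}{2}}\int_{B_{\sqrt{\lambda}}}W_\lambda^2\,dx,
\]
it follows that $m(\lambda)\to+\infty$ if $p<1+4/N$, that $m(\lambda)\to\|\Kwong\|_{L^2(\R^N)}^2$ (while $m(\lambda)<\|\Kwong\|_{L^2(\R^N)}^2$ for every $\lambda$) if $p=1+4/N$, and that $m(\lambda)\to0$ if $p>1+4/N$, the trichotomy being exactly the sign of $\frac{2}{p-1}-\frac{N}{2}$. The doubly constrained problem enters both in the construction of solutions and in giving the correct notion of minimum: writing $\Lcal_\rho(s):=\max\{\int_{B_1}|u|^{p+1}:\ \|u\|_{L^2}^2=\rho,\ \|\nabla u\|_{L^2}^2\le s\}$, which is attained thanks to the compact embedding $H^1_0(B_1)\hookrightarrow L^{p+1}(B_1)$ and the weak closedness of the sublevel sets of $\|\nabla\cdot\|_{L^2}$, one checks that $\Lcal_\rho$ is increasing, that its maximizers, after the scalar normalization dictated by the Lagrange multipliers, are positive solutions of \eqref{eq:stationary_intro} and conversely exhaust the family $\{U_\lambda\}$, and that, by Gagliardo--Nirenberg, $\Lcal_\rho(s)$ is sublinear in $s$ precisely when $p<1+4/N$. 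This last fact makes $\mathcal{E}$ a genuine constrained minimum along the relevant one-parameter family even in the supercritical regime, where $\mathcal{E}|_{\{\mathcal{Q}=\rho\}}$ is itself unbounded below (Appendix \ref{app:A}).

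\textbf{Step 3 (shape of $m$ and conclusion).} For $p\le1+4/N$ the decisive point is the strict monotonicity $m'(\lambda)>0$; together with $m(\lambda)\to0$ at the left endpoint and the limits of Step 2, this makes $m$ a homeomorphism of $(-\lambda_1(B_1),+\infty)$ onto $(0,+\infty)$, respectively onto $(0,\|\Kwong\|_{L^2(\R^N)}^2)$, which yields items (1) and (2) at once — uniqueness of the element of $\mathcal{P}_\rho$, the fact that it achieves $e_\rho$ trivially, and $\mathcal{P}_\rho=\emptyset$ for $\rho\ge\|\Kwong\|_{L^2(\R^N)}^2$ in the critical case. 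For $p>1+4/N$, $m$ is continuous, positive and tends to $0$ at both ends of its interval, hence attains a maximum $\rho^\ast:=\max m>0$ at some $\lambda^\ast$. Then $\mathcal{P}_\rho=\emptyset$ for $\rho>\rho^\ast$; for $0<\rho\le\rho^\ast$ the set $\{\lambda:m(\lambda)=\rho\}$ is nonempty by the intermediate value theorem and compact, since it stays away from both endpoints (where $m\to0<\rho$), so $\lambda\mapsto\mathcal{E}(U_\lambda)$ attains its minimum on it and $e_\rho$ is achieved; and for $0<\rho<\rho^\ast$ the intermediate value theorem on $(-\lambda_1(B_1),\lambda^\ast]$ and on $[\lambda^\ast,+\infty)$ produces two distinct potentials $\lambda_a<\lambda^\ast<\lambda_b$ with $m(\lambda_a)=m(\lambda_b)=\rho$, so $\#\mathcal{P}_\rho\ge2$. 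Finally, a computation using that $U_\lambda$ is a critical point of the action gives $\frac{d}{d\lambda}\mathcal{E}(U_\lambda)=-\tfrac12\lambda\,m'(\lambda)$, whence $\mathcal{E}(U_{\lambda_b})-\mathcal{E}(U_{\lambda_a})=\tfrac12\int_{\lambda_a}^{\lambda_b}\bigl(m(\lambda)-\rho\bigr)\,d\lambda$, which is positive once $m$ is known to be unimodal (so that $m\ge\rho$ on $[\lambda_a,\lambda_b]$); the solution $U_{\lambda_b}$ of higher energy is therefore not a least energy solution.

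\textbf{Main obstacle.} The crux is the monotonicity of the mass--potential curve in the $L^2$-subcritical and critical cases — equivalently the Vakhitov--Kolokolov slope condition $m'(\lambda)>0$, the same delicate spectral/ODE information needed later for the stability statements — together with, in the supercritical case, enough qualitative control of $m$ (boundedness, unimodality) to pin down the existence threshold $\rho^\ast$ and the exact multiplicity. A more technical, but still nontrivial, difficulty is the analysis of the doubly constrained maximization: the constraint $\|\nabla u\|_{L^2}^2=s$ is not weakly closed, so attainment on the sphere and the identification of maximizers with the $U_\lambda$'s must be obtained through the compact embedding together with the monotonicity and regularity of $s\mapsto\Lcal_\rho(s)$.
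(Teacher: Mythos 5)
Your reduction is essentially the paper's: positive solutions on the ball form a one-parameter curve (uniqueness on $B_1$), and the theorem is read off from the mass along that curve --- the paper parameterizes by $\alpha=\int|\nabla u|^2$ and studies $\mu(\alpha)$ with $\rho=\mu^{2/(p-1)}$, which is equivalent to your $m(\lambda)$ since $\lambda'(\alpha)>0$. But there is a genuine gap: the strict monotonicity $m'(\lambda)>0$ for $1<p\leq 1+4/N$, which you correctly identify as ``the crux'', is simply not proved in your proposal. This is not a routine step one can wave at: it is the content of Lemma \ref{lem:mu'>0_criticalcase}, whose proof requires (a) differentiability of the solution curve, hence the nondegeneracy of $U_\lambda$ for \emph{every} $\lambda\in(-\lambda_1(B_1),+\infty)$ --- itself nontrivial for $\lambda\leq 0$ and established in Theorem \ref{thm:uniqueness_nondegeneracy} via a perturbed functional and a second uniqueness theorem --- and (b) a delicate argument combining the Pohozaev identity (to express $\mu'$ through the boundary derivatives $u_r(1)v_r(1)$ of $u$ and of $v=\partial_\alpha u$), a sign analysis of $v$, and the Morse index bound from Remark \ref{rem:morse_index_in_B}. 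Without this, items (1) and (2) --- uniqueness of the element of $\mathcal{P}_\rho$ and the critical threshold $\|\Kwong\|_{L^2(\R^N)}^2$ --- are not established. (Your alternative route to $m(\lambda)<\|\Kwong\|_{L^2}^2$ via the pointwise comparison $W_\lambda<\Kwong$ by a ``supersolution'' argument is also suspect, since the nonlinearity is superlinear and no comparison principle applies directly; the paper instead deduces this from the monotonicity together with the limit $\mu\to\|\Kwong\|_{L^2}^{p-1}$.)

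Two smaller points. First, in the supercritical case your energy comparison $\mathcal{E}(U_{\lambda_b})-\mathcal{E}(U_{\lambda_a})=\tfrac12\int_{\lambda_a}^{\lambda_b}(m(\lambda)-\rho)\,d\lambda$ is correct and is the analogue of Lemma \ref{lemma:gs=>mu'>0}, but you invoke unimodality of $m$ to get positivity; unimodality is only numerically suggested (Remark \ref{rem:simulation}) and is not proved anywhere. The fix is to choose $\lambda_a,\lambda_b$ as the crossings of level $\rho$ adjacent to a global maximizer of $m$, so that $m\geq\rho$ on $[\lambda_a,\lambda_b]$ by construction; this is exactly how the hypothesis of Lemma \ref{lemma:gs=>mu'>0} is arranged. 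Second, the endpoint asymptotics you state are correct and correspond to Lemma \ref{lemma:mu_tends_to_zero} (via the blow-up analysis of Lemma \ref{rem:blowup}) and to the Ambrosetti--Prodi analysis of Section \ref{section:Ambrosetti-Prodi}; your sketch of these is plausible but, as with the monotonicity, the actual proofs carry real content (exponential decay uniform in the parameter, bounded Morse index) that your outline does not supply.
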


\begin{remark}
As a consequence, we have that for $p$ and $\rho$ as in point 3. of the previous theorem, the problem
\[
\text{find } (U,\lambda)\in H^1_0(B_1)\times \R:\
\begin{cases}
-\Delta U+\lambda U=U^p\\
\int_{B_1} U^2\, dx=\rho
\end{cases}
\]
admits multiple positive radial solutions.
\end{remark}

Concerning the stability, following \cite{Fukuizumi2012}, we apply the abstract results in \cite{GrillakisShatahStrauss}, which require the local existence for the Cauchy problem associated to \eqref{eq:Schrodinger_intro}.
Since this is not known to hold for all the cases we consider, we take it as an assumption and refer to \cite[Remark 1]{Fukuizumi2012} for further details.

\begin{theorem}\label{thm:intro_stability}
Suppose that for each $\Phi_0\in H^1_0(B_1,\C)$ there exist $t_0>0$, only depending on $\|\Phi_0\|$,
and a unique solution $\Phi(t,x)$ of \eqref{eq:Schrodinger_intro} with initial datum $\Phi_0$, in
the interval $I=[0,t_0)$.

Let $U$ denote a least energy solution of \eqref{eq:stationary_intro} as in Theorem \ref{thm:intro_existence} and let $\Phi(t,x)=e^{i\lambda t}U(x)$.
\begin{enumerate}
\item If $1<p\leq 1+4/N$ then $\Phi$ is orbitally stable;
\item if $1+4/N<p<2^*-1$ then $\Phi$ is orbitally stable for a.e. $\rho \in (0,\rho^\ast]$.
\end{enumerate}
\end{theorem}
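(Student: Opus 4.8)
The plan is to follow the Grillakis–Shatah–Strauss (GSS) framework, exactly as in \cite{Fukuizumi2012}, applied to the least energy solution $U$ obtained in Theorem \ref{thm:intro_existence}. The standing wave $e^{i\lambda t}U$ is orbitally stable provided (i) $U$ is a critical point of the action $\mathcal{A}_\lambda=\mathcal{E}+\lambda\mathcal{Q}$ with a suitable spectral (Morse-index) property for the linearized operator, and (ii) the scalar function $\lambda\mapsto \mathcal{Q}(U_\lambda)$, computed along a branch of solutions, is increasing (the Vakhitov–Kolokolov condition). Rather than verifying the spectral condition directly, the strategy is to reformulate everything in terms of the map $\rho\mapsto e_\rho$: I would prove that $e_\rho$ is monotone and/or convex/concave in the relevant range and that its one-sided derivatives coincide with $-\lambda$ (the Lagrange multiplier), so that the GSS slope condition translates into a monotonicity/strict-convexity statement about $e_\rho$ as a function of $\rho$.

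First I would set up the correspondence between the two constrained problems: any least energy solution $U\in\mathcal{P}_\rho$ solves $-\Delta U+\lambda U=U^p$ for some $\lambda=\lambda(\rho)$, and standard arguments (testing the equation against $U$, Pohozaev-type identities) give $\mathcal{E}(U)$ and $\mathcal{Q}(U)=\rho$ in terms of $\|U\|_{H^1_0}$ and $\|U\|_{L^{p+1}}$. Then I would establish that $e_\rho$ is locally Lipschitz, hence differentiable a.e., and that at every point of differentiability $e_\rho'(\rho)=-\lambda(\rho)$; this is where the a.e.\ restriction in part 2 originates. In the $L^2$-subcritical and critical case (part 1) I expect $\rho\mapsto e_\rho$ to be strictly concave and strictly decreasing on its whole existence interval, which both forces $\lambda(\rho)$ to be well-defined and gives the strict slope condition needed for GSS at every $\rho$; uniqueness of the element of $\mathcal{P}_\rho$ from Theorem \ref{thm:intro_existence} removes any ambiguity about which solution is being linearized, and positivity plus simplicity of the ground-state eigenvalue of $-\Delta+\lambda-pU^{p-1}$ supplies the Morse-index hypothesis.

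For part 2, the $L^2$-supercritical case, the branch $\rho\mapsto U_\rho$ need not be smooth and $\mathcal{P}_\rho$ contains more than one element, so I would argue as follows: $e_\rho$ is still locally Lipschitz and monotone on $(0,\rho^\ast]$, hence differentiable at a.e.\ $\rho$; at each such $\rho$ the derivative equals $-\lambda(\rho)$ and, crucially, the (one-sided) derivative being a genuine derivative lets one run the GSS convexity argument to conclude that the function $d(\lambda)$ built from the action is twice differentiable there with $d''<0$ (equivalently $\partial_\lambda\mathcal{Q}>0$). Combined with the Morse-index information coming from the fact that $U$ minimizes $\mathcal{E}$ over $\mathcal{P}_\rho$ (so the linearized operator has at most one negative direction, transverse to the mass constraint), GSS yields stability at those $\rho$. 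Thus stability holds for a.e.\ $\rho\in(0,\rho^\ast]$, precisely the points of differentiability of $e_\rho$.

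The main obstacle I anticipate is making the identification $e_\rho'(\rho)=-\lambda(\rho)$ and the associated regularity/convexity of $e_\rho$ rigorous without a Nehari-type structure — on a bounded domain there is no scaling to regularize the supercritical problem, so one cannot borrow the usual whole-space arguments (cf.\ Remark \ref{rem:kwong}). Establishing that $e_\rho$ is locally Lipschitz, that its a.e.\ derivative is exactly the Lagrange multiplier, and that at points of differentiability the GSS hypotheses (the strict slope condition and the spectral condition) are met — using only the variational characterization of least energy solutions and elliptic regularity — is the technical heart of the proof; everything else is a more or less direct transcription of the abstract stability theorem of \cite{GrillakisShatahStrauss} as carried out in \cite{Fukuizumi2012}.
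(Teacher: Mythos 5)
Your outline correctly identifies the abstract machinery (Grillakis--Shatah--Strauss as packaged in \cite{Fukuizumi2012}) and the correct heuristic dictionary ($e_\rho'=-\lambda$ up to a constant, slope condition $\Leftrightarrow$ concavity of $e_\rho$), but the two steps that carry the entire mathematical content are missing or would fail. First, for part 1 you only say you ``expect'' $\rho\mapsto e_\rho$ to be strictly concave; that strict concavity is exactly equivalent to the Vakhitov--Kolokolov condition $\partial_\lambda\|R_\lambda\|_{L^2}^2>0$ and is the hard analytic core of the result. The paper proves it as Lemma \ref{lem:mu'>0_criticalcase} ($\mu'(\alpha)>0$ whenever $p\le 1+4/N$, along the smooth curve $\mathcal{S}^+$ parameterized by $\alpha=\int|\nabla u|^2$), via a genuinely nontrivial argument combining the Pohozaev identity on the ball, Hopf's lemma, the sign structure of $v=\frac{d}{d\alpha}u$, and the Morse-index-one property; together with $\lambda'(\alpha)>0$ this yields Corollary \ref{coro:mu'>0=>stab}. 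Without some substitute for this computation your part 1 is an unproved assertion. Relatedly, your source for the spectral hypothesis is unsound: you argue that the linearized operator has at most one negative direction ``because $U$ minimizes $\mathcal{E}$ over $\mathcal{P}_\rho$'', but $\mathcal{P}_\rho$ is a discrete set of constrained critical points (finite for generic $\rho$), so minimality over it carries no second-order information whatsoever. The paper gets the Morse index from the two-constraint maximization problem $M_\alpha$ (Lemma \ref{lemma:morse_index_estimate}) and from the uniqueness/nondegeneracy theory for positive solutions on the ball, not from the definition of least energy solution.

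Second, your mechanism for the ``a.e.\ $\rho$'' in part 2 does not work. Differentiability of the locally Lipschitz function $\rho\mapsto e_\rho$ at a point gives a well-defined one-sided multiplier $\lambda(\rho)$, but it does not give the \emph{strict} inequality $\partial_\lambda\mathcal{Q}(R_\lambda)>0$ that GSS requires: at a point of differentiability the branch can perfectly well have $\partial_\lambda\mathcal{Q}=0$ (an inflection), where the abstract theorem is silent. The paper's ``a.e.'' has a different origin: Sard's lemma applied to the smooth map $\alpha\mapsto\mu(\alpha)$ shows that for a.e.\ $\bar\mu=\rho^{(p-1)/2}$ the level set $\{\mu(\alpha)=\bar\mu\}$ consists of finitely many pairs $\{\alpha_{i,1},\alpha_{i,2}\}$ with $\mu'(\alpha_{i,1})>0>\mu'(\alpha_{i,2})$, and the separate energy comparison of Lemma \ref{lemma:gs=>mu'>0} (based on the identity $M'(\alpha)=\tfrac{p+1}{2\mu(\alpha)}$ and the mean value theorem) shows that the \emph{least energy} solution must sit at an $\alpha_{i,1}$, i.e.\ on the stable side. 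This selection argument is precisely what links ``least energy'' to ``stable'', and your proposal has no analogue of it.
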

\noindent In point 2. of the previous theorem, we expect orbital stability for every $\rho\in (0,\rho^\ast)$,
and instability for $\rho=\rho^\ast$, see Remark \ref{rem:simulation} ahead.

As we mentioned, the authors in \cite{FibichMerle2001,Fukuizumi2012} consider least action solutions, that is minimizers associated to
\[
a_\lambda=\inf\{ \mathcal{A}_\lambda(U):\ U\in H^1_0(B_1),\ U\not\equiv 0, \
\mathcal{A}'_\lambda(U)=0 \}.
\]
In this situation, the existence and positivity of the least energy solution is not an issue. Indeed, it is well known that problem \eqref{eq:stationary_intro} admits a unique positive solution $R_\lambda$ if and only if $\lambda\in (-\lambda_1(B_1),+\infty)$, where $\lambda_1(B_1)$ is the first eigenvalue of the Dirichlet Laplacian. Such a solution achieves $a_\lambda$.
Concerning the stability, in the critical case \cite{FibichMerle2001} and in the subcritical one \cite{Fukuizumi2012} it is proved that $e^{i\lambda t}R_\lambda$ is orbitally stable whenever $\lambda \sim -\lambda_1(B_1)$ and $\lambda \sim +\infty$. Furthermore, stability for all $\lambda\in (-\lambda_1(B_1),+\infty)$ is proved in the second paper in dimension $N=1$ for $1<p\leq 5$, whereas in the first paper numerical evidence of it is provided in the critical case. In this context, our contribution is the following.

\begin{theorem}\label{thm:intro_stab_2}
Let us assume local existence as in Theorem \ref{thm:intro_stability} and let $R_\lambda$ be the
unique positive solution of \eqref{eq:stationary_intro}. If $1<p\leq 1+4/N$, then $e^{i\lambda t}R_
\lambda$ is orbitally stable for every $\lambda\in (-\lambda_1(B_1),+\infty)$.
\end{theorem}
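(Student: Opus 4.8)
The plan is to reduce the stability of $e^{i\lambda t}R_\lambda$ to the monotonicity of a single scalar function, exploiting the rigidity provided by Theorem \ref{thm:intro_existence}. Recall that the abstract theory of \cite{GrillakisShatahStrauss} (as adapted in \cite{Fukuizumi2012}) tells us that the standing wave $e^{i\lambda t}R_\lambda$ is orbitally stable provided the map $\lambda\mapsto \mathcal{Q}(R_\lambda)=\|R_\lambda\|_{L^2(B_1)}^2$ is strictly increasing (the Vakhitov--Kolokolov, or slope, condition), together with the spectral condition that the linearized operator $-\Delta + \lambda - p R_\lambda^{p-1}$ has Morse index exactly one on $H^1_0(B_1)$. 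The latter is classical for the unique positive solution $R_\lambda$, since $R_\lambda$ achieves $a_\lambda$ and is therefore a mountain-pass/Nehari minimizer, hence nondegenerate with index one; so the crux is the slope condition.

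First I would record that, for $\lambda\in(-\lambda_1(B_1),+\infty)$, the unique positive solution $R_\lambda$ depends smoothly on $\lambda$ (by the implicit function theorem, using the nondegeneracy just mentioned), and that $Q(\lambda):=\mathcal{Q}(R_\lambda)$ is a well-defined $C^1$ function with $Q(\lambda)\to 0$ as $\lambda\to -\lambda_1(B_1)^+$. Next comes the key dictionary: because $p$ is $L^2$-subcritical or critical, Theorem \ref{thm:intro_existence}(1)--(2) says that for each admissible $\rho$ the set $\mathcal{P}_\rho$ has exactly one element, which is the least energy solution. Since every $R_\lambda$ is a positive solution of \eqref{eq:stationary_intro} with mass $Q(\lambda)$, uniqueness forces $R_\lambda$ to \emph{be} the least energy solution at mass $\rho=Q(\lambda)$; conversely every least energy solution arises this way. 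Thus $\lambda\mapsto Q(\lambda)$ is a bijection from $(-\lambda_1(B_1),+\infty)$ onto the existence range of masses — which is $(0,+\infty)$ in the subcritical case and $(0,\|Z_{N,p}\|_{L^2(\R^N)}^2)$ in the critical case — hence it is a continuous injection on an interval, therefore strictly monotone. Since $Q\to 0$ at the left endpoint and the range is an interval starting at $0$, the monotonicity must be \emph{increasing}. This is exactly the slope condition, and I would then invoke the Grillakis--Shatah--Strauss criterion (under the standing local-existence assumption) to conclude orbital stability for every $\lambda$ in the range.

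I expect the main obstacle to be the careful verification of the spectral/Morse-index hypothesis of \cite{GrillakisShatahStrauss} for all $\lambda$ simultaneously, rather than the slope condition, which the uniqueness statement of Theorem \ref{thm:intro_existence} essentially hands us for free. One must check that the operator $L_+ = -\Delta + \lambda - p R_\lambda^{p-1}$ restricted to $H^1_0(B_1)$ has exactly one negative eigenvalue and trivial kernel: positivity and radial monotonicity of $R_\lambda$ (the latter by moving planes) give that the kernel contains no radial function, and one argues the non-radial directions cannot lower the energy either, so $L_+$ is nondegenerate; combined with the fact that $R_\lambda$ sits at a mountain-pass level of $\mathcal{A}_\lambda$, the index is precisely one. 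A secondary technical point is to make sure the differentiability of $\lambda\mapsto R_\lambda$ is genuinely available on the whole interval, including a neighborhood of $-\lambda_1(B_1)$ where $R_\lambda$ bifurcates from zero; there one can either use bifurcation theory directly or rescale, but since we only need strict monotonicity of $Q$ and we already have it from the topological argument above, differentiability is a convenience rather than a necessity.
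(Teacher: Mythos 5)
Your reduction of the slope condition to the uniqueness statement of Theorem \ref{thm:intro_existence} is an attractive soft argument and genuinely different from the paper's route: the paper proves the pointwise derivative estimate $\mu'(\alpha)>0$ along the solution curve $\mathcal{S}^+$ (Lemma \ref{lem:mu'>0_criticalcase}, via the Pohozaev-type identity of Lemma \ref{lemma:mu'} and a Morse-index argument), and then converts it into $\partial_\lambda\|R_\lambda\|_{L^2}^2>0$ through $\lambda'(\alpha)>0$ (Corollary \ref{coro:mu'>0=>stab}). However, your argument has a genuine gap at its last step. What ``continuous injection on an interval'' gives you is that $Q(\lambda)=\|R_\lambda\|_{L^2}^2$ is \emph{strictly increasing}; this yields $Q'(\lambda)\ge 0$ and $Q'>0$ on a dense open set, but it does not exclude points $\lambda_0$ with $Q'(\lambda_0)=0$ (a $C^1$ function can be strictly increasing while its derivative vanishes on a nonempty closed set, even one of positive measure). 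The criterion you invoke, namely \cite[Proposition 5]{Fukuizumi2012}, requires the \emph{strict} inequality $\partial_\lambda\|R_\lambda\|_{L^2}^2>0$ and is inconclusive where the derivative vanishes; so your proof only gives stability for those $\lambda$ with $Q'(\lambda)>0$, not for every $\lambda\in(-\lambda_1(B_1),+\infty)$ as the theorem asserts. This is exactly the distinction that forces the ``a.e.\ $\rho$'' in point 2 of Theorem \ref{thm:intro_stability}, where the strict sign is not available everywhere.

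A related structural caveat: the uniqueness in parts 1--2 of Theorem \ref{thm:intro_existence} is itself proved in the paper \emph{from} $\mu'(\alpha)>0$ together with the parameterization of $\mathcal{S}^+$. Your route is not formally circular (you are entitled to cite the stated theorem), but by retaining only the qualitative uniqueness you discard precisely the quantitative information --- strict positivity of the derivative at every point --- that the stability criterion needs. To close the gap you would have to prove $Q'(\lambda)>0$ pointwise, which in this framework is exactly the content of Lemma \ref{lem:mu'>0_criticalcase}; I do not see how to recover it from injectivity alone. (Your discussion of the spectral condition is not the issue: nondegeneracy and Morse index one are supplied by Theorem \ref{thm:uniqueness_nondegeneracy} and are already packaged into \cite[Proposition 5]{Fukuizumi2012}.)
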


\begin{remark}
In \cite{Fukuizumi2012} it is also shown that, in the supercritical case $p>1+4/N$, the standing wave associated to $R_\lambda$ is orbitally \emph{unstable} for $\lambda\sim +\infty$. In view of Theorem \ref{thm:intro_stability}, point 2., this marks a substantial qualitative difference between the two notions of ground state.
\end{remark}

We will prove the above results as a byproduct of the analysis of a different variational problem which we think is of independent interest. The main feature of such a problem lies on the fact that it involves an optimization with two constraints. Let $\Omega\subset \R^N$ be a general bounded domain. For every $\a>\l_1(\Omega)$ fixed, we consider the following maximization problem
\begin{equation}\label{eq:M_intro}
M_\a=\sup \left\{ \int_\Om |u|^{p+1}\,dx:\ u\in\spc, \int_\Om u^2\,dx=1,\ \int_\Om |\nabla u|^2\,dx=\a \right\},
\end{equation}
which is related to the validity of Gagliardo-Nirenberg type inequalities (Appendix \ref{app:A}).

\begin{theorem}\label{thm:intro_M}
Given $\alpha>\lambda_1(\Omega)$, $M_\alpha$ is achieved by a positive function $u\in H^1_0(\Omega)$, and there exist $\mu>0$, $\lambda>-\lambda_1(\Omega)$ such that
\begin{equation}\label{eq:problema_mu_lambda}
-\Delta u+\lambda u=\mu u^p,\quad \int_\Omega u^2\, dx=1,\quad \int_\Omega |\nabla u|^2\, dx=\alpha.
\end{equation}

Moreover, as $\alpha\to \lambda_1(\Omega)^+$,
\[
u\to \varphi_1,\quad \mu\to 0^+,\quad \lambda\to (-\lambda_1(\Omega))^+
\]
($\varphi_1$ denotes the first positive eigenfunction, normalized in $L^2$).

As $\alpha\to +\infty$,
\[\frac{\alpha}{\lambda}\to \frac{N(p-1)}{N+2-p(N-2)},\qquad \lambda\to +\infty,\]
and
\begin{enumerate}
 \item if $1<p<1+\frac{4}{N}$ then $\mu\to +\infty$;
 \item if $p=1+\frac{4}{N}$ then $\m \to \|\Kwong\|_{L^2(\R^N)}^{p-1}$;
 \item if $1+\frac{4}{N}<p<2^*-1$ then $\mu\to 0$;
\end{enumerate}
furthermore, $u$ is either a one-spike or two-spike solution, and a suitable scaling of $u$ approaches $\Kwong$, as defined in Remark \ref{rem:kwong}.
\end{theorem}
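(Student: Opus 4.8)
\emph{Existence and the Euler--Lagrange system.} The plan is to prove the statements about $M_\alpha$ by the direct method and then to read the asymptotics off the resulting Euler--Lagrange system. I would first pass to the relaxed problem
\[
\widetilde M_\alpha=\sup\left\{\int_\Omega|u|^{p+1}\,dx:\ u\in\spc,\ \int_\Omega u^2\,dx=1,\ \int_\Omega|\nabla u|^2\,dx\le\alpha\right\},
\]
whose admissible set is bounded in $\spc$ and weakly closed, and on which $u\mapsto\int_\Omega|u|^{p+1}$ is weakly continuous since $p+1<2^*$; the direct method produces a maximizer $u$, which may be taken nonnegative (replace $u$ by $|u|$). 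If the gradient constraint were inactive at $u$, the Lagrange rule for the single constraint $\int_\Omega u^2=1$ would force $u$ to be constant on $\{u>0\}$, hence $u\equiv0$ since $u\in\spc$, a contradiction; so $\int_\Omega|\nabla u|^2=\alpha$, $u$ achieves $M_\alpha$, and $M_\alpha=\widetilde M_\alpha$. Since a nonnegative eigenfunction must be $\varphi_1$, whose Dirichlet energy $\lambda_1(\Omega)$ is $<\alpha$, the differentials $2u$ and $-2\Delta u$ of the two constraints are linearly independent, so one obtains genuine multipliers with $-\Delta u+\lambda u=\mu u^p$. Here $\mu\ne0$ (else $u=\varphi_1$ once more); testing against $u$ gives $\alpha+\lambda=\mu M_\alpha$, and testing against $\varphi_1$ gives that $\mu$ and $\lambda+\lambda_1(\Omega)$ share their sign. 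Combining this with the monotonicity of $\alpha\mapsto M_\alpha$ — transparent on a ball from the dilation $u\mapsto u(\cdot/r)$ with $r<1$ (which raises the Dirichlet energy and the $L^{p+1}$ norm after $L^2$-renormalization), and in general from the envelope theorem together with a local spike perturbation of the maximizer — yields $\mu>0$, whence $\lambda>-\lambda_1(\Omega)$ and $u>0$ by the strong maximum principle.

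\emph{The limit $\alpha\to\lambda_1(\Omega)^+$.} Let $u_\alpha$ be a maximizer; it is bounded in $\spc$, so along a subsequence $u_\alpha\rightharpoonup u_0$ in $\spc$ and strongly in $L^2$, giving $\|u_0\|_{L^2}=1$, while weak lower semicontinuity and the Poincar\'e inequality force $\lambda_1(\Omega)\le\|\nabla u_0\|_{L^2}^2\le\lambda_1(\Omega)$; thus $u_0$ is extremal for Poincar\'e, $u_0=\varphi_1$, and the convergence is strong in $\spc$. Testing the equation against $\varphi_1$ gives $(\lambda_1(\Omega)+\lambda_\alpha)\int_\Omega u_\alpha\varphi_1=\mu_\alpha\int_\Omega u_\alpha^p\varphi_1$, where $\int_\Omega u_\alpha\varphi_1\to1$ and $\int_\Omega u_\alpha^p\varphi_1\to\int_\Omega\varphi_1^{p+1}>0$; if $\mu_\alpha$ did not tend to $0$ one could pass to the limit in the equation (dividing by $\mu_\alpha$ when $\mu_\alpha\to\infty$) and conclude that $\varphi_1^{p-1}$ is constant, which is false. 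Hence $\mu_\alpha\to0^+$, and then $\lambda_1(\Omega)+\lambda_\alpha\to0$, i.e.\ $\lambda_\alpha\to(-\lambda_1(\Omega))^+$.

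\emph{The limit $\alpha\to+\infty$.} Pohozaev's identity for $-\Delta u_\alpha+\lambda_\alpha u_\alpha=\mu_\alpha u_\alpha^p$, combined with the equation tested against $u_\alpha$ so as to replace $\mu_\alpha\int_\Omega u_\alpha^{p+1}$ by $\alpha+\lambda_\alpha$, gives the exact relation
\[
\frac{N+2-p(N-2)}{2(p+1)}\,\alpha-\frac{N(p-1)}{2(p+1)}\,\lambda_\alpha=\frac12\int_{\partial\Omega}|\partial_\nu u_\alpha|^2\,(x\cdot\nu)\,d\sigma,
\]
so on a star-shaped domain $\alpha/\lambda_\alpha\ge N(p-1)/(N+2-p(N-2))$, and in general $\alpha/\lambda_\alpha\to N(p-1)/(N+2-p(N-2))$ once the boundary term is known to be $o(\alpha)$. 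I would establish this, and the remaining claims, by a concentration--compactness/blow-up analysis. Since $\|u_\alpha\|_{L^2}=1$ but $\|\nabla u_\alpha\|_{L^2}\to\infty$, the maximizer must concentrate (a mere oscillation would waste Dirichlet energy without raising $\int|u_\alpha|^{p+1}$, so it cannot be optimal); rescaling about a concentration point by the scale $\lambda_\alpha^{-1/2}$ and amplitude $(\lambda_\alpha/\mu_\alpha)^{1/(p-1)}$, the profile solves $-\Delta v+v=v^p$, hence by Remark \ref{rem:kwong} converges in $H^1(\R^N)$ to $\Kwong$. Comparing $M_\alpha$ with the value obtained by inserting a narrow rescaled copy of $\Kwong$ (which shows $M_\alpha$ grows like $\alpha^{N(p-1)/4}$) excludes boundary concentration, bounds the number of bubbles — leaving the one- or two-spike alternative — and makes the boundary term $o(\alpha)$, so that $\alpha/\lambda_\alpha$ has the stated limit. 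Finally, distributing mass and Dirichlet energy among the $k\in\{1,2\}$ bubbles forces $\lambda_\alpha\to+\infty$ and, to leading order, $1=k\,(\lambda_\alpha/\mu_\alpha)^{2/(p-1)}\lambda_\alpha^{-N/2}\|\Kwong\|_{L^2(\R^N)}^2$, i.e.\ $\mu_\alpha\sim k^{(p-1)/2}\|\Kwong\|_{L^2(\R^N)}^{p-1}\,\lambda_\alpha^{\,1-N(p-1)/4}$; as $\lambda_\alpha\to+\infty$, the exponent $1-N(p-1)/4$ is positive, zero, or negative in cases 1, 2 and 3, yielding respectively $\mu_\alpha\to+\infty$; $\mu_\alpha\to\|\Kwong\|_{L^2(\R^N)}^{p-1}$ (after ruling out $k=2$ in the critical case, where a single spike strictly wins the energy comparison); and $\mu_\alpha\to0$. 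The main difficulty, predictably, lies in this last blow-up analysis — pinning down the profile $\Kwong$ with the correct scaling rates, excluding concentration on $\partial\Omega$, controlling the number of bubbles, and extracting the sharp constant $\|\Kwong\|_{L^2(\R^N)}^{p-1}$ in the $L^2$-critical case (which forces a single spike there); everything else relies only on the direct method, elliptic regularity, Pohozaev's identity and soft compactness arguments.
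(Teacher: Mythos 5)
Your existence argument and the analysis as $\alpha\to\lambda_1(\Omega)^+$ match the paper's (Lemmas \ref{lemma:U_not_empty_and_compact}--\ref{lemma:m_M_attained} and Lemma \ref{lemma:alpha_to_lambda_1}): relax to $\int_\Omega|\nabla u|^2\,dx\le\alpha$, show the constraint is active, extract multipliers, and pass to the limit using the bounds of Lemma \ref{lemma:case_alpha_n_bounded}. One caveat concerns the sign of $\mu$: your route via ``monotonicity of $\alpha\mapsto M_\alpha$ plus the envelope theorem'' presupposes differentiability of $M_\alpha$ in $\alpha$, which is not available at this stage. The paper's Proposition \ref{prop:sign_of_mu_lambda} avoids this by constructing an explicit admissible curve $w(t)=tu+s(t)\varphi_1$ on the $L^2$-sphere whose Dirichlet energy increases strictly through $\alpha$ at $t=1$ (this is where $\alpha>\lambda_1(\Omega)$ enters), so that $w(t)\in\mathcal{U}_\alpha$ for $t<1$ and the one-sided sign of $\frac{d}{dt}\int_\Omega|w(t)|^{p+1}\,dx$ determines the sign of $\mu$. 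Your idea is the same in spirit and can be repaired along these lines.

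The genuine gap is in the regime $\alpha\to+\infty$, which is the core of the theorem and which you largely defer. The paper's key structural input is that a maximizer of $M_\alpha$ minimizes the action $J_{\mu,\lambda}$ restricted to the codimension-two manifold $\{\int_\Omega u^2\,dx=1,\ \int_\Omega|\nabla u|^2\,dx=\alpha\}$ (Lemma \ref{lemma:M_alpha_minimization}), whence the Morse index of $J''_{\mu,\lambda}(u)$ is at most $2$ (Lemma \ref{lemma:morse_index_estimate}). This uniform Morse index bound is precisely what makes the blow-up analysis run: it permits invoking the results of Esposito--Petralla (Lemma \ref{rem:blowup}), which give convergence of the rescaled solutions to $\Kwong$, exponential decay away from at most (Morse index) many local maxima, and hence the one- or two-spike alternative, the identity $\mu^{2/(p-1)}\lambda^{N/2-2/(p-1)}\to\|\Kwong\|_{L^2(\R^N)}^2$, and the limit of $\alpha/\lambda$. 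Your proposed substitute --- an energy comparison with a single rescaled bubble in order to ``bound the number of bubbles'' --- does not deliver this: at leading order such a comparison either excludes every multi-bump configuration or gives no a priori bound on the number of concentration points, and in particular it cannot produce the specific bound by $2$; nor do you supply any mechanism forcing the maximizer to decompose into finitely many bubbles in the first place. Relatedly, your rescaling at scale $\lambda_\alpha^{-1/2}$ presupposes $\lambda_\alpha\to+\infty$, which you only derive afterwards from the bubble decomposition. Without the Morse-index observation (or an equivalent compactness device), the statements on $\alpha/\lambda$, on the spike structure, and on the three regimes for $\mu$ remain unproved.
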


More detailed asymptotics are provided in Sections \ref{section:Ambrosetti-Prodi} and
\ref{sec:focusing}.
This problem is related to the previous one in the following way. Taking $u>0$ and $\mu>0$ as in
\eqref{eq:problema_mu_lambda},  the function $U=\mu^{1/(p-1)}u$ belongs to $\gsset_\rho$ for $\rho=
\mu^{2/(p-1)}$. Incidentally, if one considers the minimization problem
\[
m_\a=\inf \left\{ \int_\Om |u|^{p+1}\,dx:\ u\in\spc, \int_\Om u^2\,dx=1,\ \int_\Om |\nabla u|^2\,dx=
\a \right\},
\]
then one obtains a solution of \eqref{eq:problema_mu_lambda} with $\mu<0$ and $\lambda<-\lambda_1(\Omega)$. This allows to recover the well know theory of ground states for the defocusing Schr\"odinger equation $i\frac{\partial \Phi}{\partial t}+\Delta \Phi- |\Phi|^{p-1}\Phi=0$, see Appendix \ref{app:C}. Moreover, when $\alpha\sim \lambda_1(\Omega)$, there exist exactly two solutions $(u,\mu,\lambda)$ of \eqref{eq:problema_mu_lambda} which achieve $M_\alpha$ and $m_\alpha$ respectively. More precisely, in the context of Ambrosetti-Prodi theory \cite{AmbrosettiProdipaper,AmbrosettiProdiBook}, we prove that $(u,\mu,\lambda)=(\varphi_1,0,-\lambda_1(\Omega))$ is an ordinary singular point for a suitable map, which yields sharp asymptotic estimates as $\alpha\to \lambda_1(\Omega)^+$.

We stress the fact that the above mentioned results about the two-constraints problem hold for a
general bounded domain $\Omega$. Going back to the case $\Omega=B_1$,
positive solutions for equation \eqref{eq:stationary_intro} have been the object of an intensive
study by several authors, in particular regarding uniqueness issues; among others, we refer to
\cite{GidasNiNirenberg1979,Kwong1989,KwongLi1992,Zhang1992,KabeyaTanaka1999,Korman2002,Tang2003,FelmerMartinezTanaka2008}. In our framework, we can exploit the synergy with such uniqueness results in
order to fully characterize the positive solutions of \eqref{eq:problema_mu_lambda}. We do this in
the following statement, which collects the results of Proposition \ref{prop:focusing_radial_regular_curve} and of Appendix \ref{app:C} below.

\begin{theorem}\label{thm:main_final}
Let $\Omega=B_1$ and
\[
\mathcal{S}=\left\{(u,\mu,\lambda,\a)\in\spc\times\R^3 : u>0 \text{ and \eqref{eq:problema_mu_lambda} holds}\right\}.
\]
Then
\[\mathcal{S}=\mathcal{S}^+\cup \mathcal{S}^-\cup \{(\varphi_1,0,-\lambda_1(B_1),\lambda_1(B_1))\},\]
where both $\mathcal{S}^+$ and $\mathcal{S}^-$ are smooth curves parameterized by $\alpha\in (\lambda_1(B_1),+\infty)$, corresponding to $\mathcal{S}\cap\{\mu>0\}$ and $\mathcal{S}\cap\{\mu<0\}$ respectively. In addition, $(u,\mu,\lambda,\a)\in \mathcal{S}^+$ ($\mathcal{S}^-$) if and only if $u$ achieves $M_\a$ ($m_\alpha$).
\end{theorem}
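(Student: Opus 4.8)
The plan is to reduce, by rescaling, the description of $\mathcal{S}$ to the thoroughly understood equations $-\Delta w+\lambda w=\pm w^p$ on the ball, to split the analysis according to the sign of $\mu$, and then to combine the symmetry/uniqueness theory on $B_1$ with the asymptotics of Theorem \ref{thm:intro_M}. First I would dispose of the degenerate layer $\mu=0$: testing the equation in \eqref{eq:problema_mu_lambda} against $\varphi_1>0$ and integrating by parts gives $(\lambda_1(B_1)+\lambda)\int_{B_1}u\varphi_1\,dx=\mu\int_{B_1}u^p\varphi_1\,dx$; since $u>0$ this forces $\lambda>-\lambda_1(B_1)$ when $\mu>0$, $\lambda<-\lambda_1(B_1)$ when $\mu<0$, and $\lambda=-\lambda_1(B_1)$ with $u\in\mathrm{span}\,\varphi_1$ when $\mu=0$, in which case the constraint $\int_{B_1}u^2\,dx=1$ yields $u=\varphi_1$ and $\alpha=\|\nabla\varphi_1\|_{L^2}^2=\lambda_1(B_1)$. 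Hence $\mathcal{S}\cap\{\mu=0\}=\{(\varphi_1,0,-\lambda_1(B_1),\lambda_1(B_1))\}$, while $\mathcal{S}^+=\mathcal{S}\cap\{\mu>0\}$ and $\mathcal{S}^-=\mathcal{S}\cap\{\mu<0\}$ exhaust the rest.

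For $\mathcal{S}^+$: given $(u,\mu,\lambda,\alpha)\in\mathcal{S}^+$, the rescaled function $w=\mu^{1/(p-1)}u>0$ solves $-\Delta w+\lambda w=w^p$ in $H^1_0(B_1)$ with $\lambda>-\lambda_1(B_1)$. By \cite{GidasNiNirenberg1979}, $w$ is radial and strictly radially decreasing, and by the uniqueness theory on the ball \cite{Kwong1989,KwongLi1992,Zhang1992} (after reduction to the radial ODE) $w$ equals the unique positive solution $R_\lambda$ at that chemical potential, which is moreover nondegenerate. The two constraints then pin down $\mu=\|R_\lambda\|_{L^2}^{p-1}$ — so that $u=R_\lambda/\|R_\lambda\|_{L^2}$ — and $\alpha=g(\lambda):=\|\nabla R_\lambda\|_{L^2}^2/\|R_\lambda\|_{L^2}^2$. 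Thus $\mathcal{S}^+$ is the image of the injective map $\lambda\mapsto(R_\lambda/\|R_\lambda\|_{L^2},\,\|R_\lambda\|_{L^2}^{p-1},\,\lambda,\,g(\lambda))$ on $(-\lambda_1(B_1),+\infty)$, with $\lambda\mapsto R_\lambda$ smooth by the implicit function theorem thanks to nondegeneracy. To obtain that $\mathcal{S}^+$ is a smooth curve parameterized by $\alpha\in(\lambda_1(B_1),+\infty)$ it remains to see that $g$ is a smooth strictly monotone bijection onto $(\lambda_1(B_1),+\infty)$: the endpoint behaviour $g(\lambda)\to\lambda_1(B_1)$ as $\lambda\to-\lambda_1(B_1)^+$ and $g(\lambda)\to+\infty$ as $\lambda\to+\infty$ comes from the asymptotic analysis underlying Theorem \ref{thm:intro_M}, while the strict monotonicity along the branch is precisely what Proposition \ref{prop:focusing_radial_regular_curve} establishes. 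The branch $\mathcal{S}^-$ is handled identically in Appendix \ref{app:C}: for $\mu<0$ the rescaled function solves the defocusing equation $-\Delta w+\lambda w=-w^p$ with $\lambda<-\lambda_1(B_1)$, which on $B_1$ has, for each such $\lambda$, a unique positive (radial) solution, and the corresponding ratio is again a monotone bijection onto $(\lambda_1(B_1),+\infty)$.

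It remains to identify these curves with the maximizers and minimizers. By Theorem \ref{thm:intro_M}, for every $\alpha>\lambda_1(B_1)$ the value $M_\alpha$ is attained at some positive $u$ for which there are $\mu>0$, $\lambda>-\lambda_1(B_1)$ with $(u,\mu,\lambda,\alpha)\in\mathcal{S}^+$; since $\mathcal{S}^+$ has exactly one element over each value of $\alpha$, the maximizer is unique and coincides with that element, and conversely each point of $\mathcal{S}^+$ is the $M_\alpha$-maximizer for its own $\alpha$. Running the same argument with $m_\alpha$ in place of $M_\alpha$ identifies $\mathcal{S}^-$ (Appendix \ref{app:C}). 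The main obstacle in this scheme is the global monotonicity of $\alpha=g(\lambda)$ along $\mathcal{S}^+$: the cited uniqueness theorems only give uniqueness of $R_\lambda$ for each fixed $\lambda$, so excluding folds of the solution branch over the $\alpha$-axis requires the finer analysis of Proposition \ref{prop:focusing_radial_regular_curve}, which combines local nondegeneracy with the Ambrosetti–Prodi description near $\alpha=\lambda_1(B_1)^+$ and the one/two-spike asymptotics as $\alpha\to+\infty$.
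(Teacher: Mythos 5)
Your proposal is correct, and in the end it rests on the same pillars the paper does: Theorem \ref{thm:main_final} is explicitly assembled from Proposition \ref{prop:focusing_radial_regular_curve} and Appendix \ref{app:C}, and you invoke exactly those for the decisive step. The genuine difference is how you organize the structure of $\mathcal{S}^+$. You first parameterize it by $\lambda$: rescaling $w=\mu^{1/(p-1)}u$ and invoking the classical uniqueness and nondegeneracy of $R_\lambda$ on the ball (Theorem \ref{thm:uniqueness_nondegeneracy}) exhibits $\mathcal{S}^+$ as the injective smooth image of $\lambda\mapsto\bigl(R_\lambda/\|R_\lambda\|_{L^2},\ \|R_\lambda\|_{L^2}^{p-1},\ \lambda,\ g(\lambda)\bigr)$, so that over each $\lambda$ there is exactly one point. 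The paper instead works directly with $\alpha$ as the parameter: Lemma \ref{lem:nondegeneracy_in_the_ball} shows that the linearization $F_{(u,\mu,\lambda)}$ is invertible at every point of $\mathcal{S}^+$ (a Morse-index argument with the perturbed functional $I_\delta$, which itself uses Theorem \ref{thm:uniqueness_nondegeneracy}), so each connected component is locally --- and then, by the compactness Lemma \ref{lemma:case_alpha_n_bounded}, globally --- a graph over $\alpha\in(\lambda_1(B_1),+\infty)$, and the exact two-solution count of Proposition \ref{prop:neighborhood of lambda_1} near $\alpha=\lambda_1(B_1)^+$ forces the component with $\mu>0$ to be unique. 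You correctly identify that your $R_\lambda$-detour cannot by itself convert the $\lambda$-parameterization into the $\alpha$-parameterization: uniqueness of $R_\lambda$ for fixed $\lambda$ says nothing about folds of $g(\lambda)=\|\nabla R_\lambda\|_{L^2}^2/\|R_\lambda\|_{L^2}^2$ over the $\alpha$-axis, and a continuous surjection onto $(\lambda_1(B_1),+\infty)$ with the right endpoint limits need not be injective. Deferring that single delicate point to Proposition \ref{prop:focusing_radial_regular_curve} (and, for $\mathcal{S}^-$, to Proposition \ref{prop:defocusing_regular_curve}) is legitimate here since that is precisely how the theorem is stated; just note that your route, while clean for showing $\mathcal{S}^\pm$ are smooth curves, does not shorten the proof of the one claim that actually requires work. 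The remaining pieces --- the identification of $\mathcal{S}\cap\{\mu=0\}$ by testing against $\varphi_1$, the sign relation between $\mu$ and $\lambda+\lambda_1(B_1)$, and the identification of the unique fiber point over each $\alpha$ with the optimizer of $M_\alpha$ (resp.\ $m_\alpha$) via Lemma \ref{lemma:m_M_attained} and Proposition \ref{prop:sign_of_mu_lambda} --- coincide with the paper's.
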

\begin{remark}
As a consequence of the previous theorem, we have that the smooth set $\mathcal{S}^+$ defined
through the maximization problem $M_\alpha$ can be used as a surrogate of the Nehari manifold
in order to ``regularize'' the minimization procedure introduced in Definition \ref{defi:les}.
\end{remark}

This paper is structured as follows. In Section \ref{section:two_constraints} we address the preliminary study of the two-constraint problems associated to $M_\alpha$, $m_\alpha$. Afterwards, in Section \ref{section:Ambrosetti-Prodi} we focus on the case where $\alpha\sim \lambda_1(\Omega)$, seen as an Ambrosetti-Prodi type problem. Section \ref{sec:focusing} is devoted to the asymptotics as $\alpha\to +\infty$ for $M_\alpha$, which concludes the proof
of Theorem \ref{thm:intro_M}. In Section \ref{section:FocusingRadial}, we restrict our attention to the case $\Omega=B_1$, proving all the existence results (in particular Theorem \ref{thm:intro_existence}), qualitative properties, and more precise asymptotics for the map $\alpha\mapsto (u,\mu,\lambda)$ which parameterizes $\mathcal{S}^+$. In particular, we show that $\mu'(\alpha)>0$ whenever $p\leq 1+4/N$, whereas it changes sign in the supercritical case. Relying on such monotonicity properties, the stability issues are addressed in Section \ref{section:Stability}, which contains the proofs of Theorem \ref{thm:intro_stability} and Theorem \ref{thm:intro_stab_2}. Finally, in Appendices \ref{app:A}, \ref{app:B} we collect some known results for the reader's convenience, whereas Appendix \ref{app:C} is devoted to the study of $\mathcal{S}^-$, which concludes the proof of Theorem \ref{thm:main_final}.
%
%
%
%
\section{A variational problem with two constraints}\label{section:two_constraints}
%
%
Let $\Omega \subset \R^N$ be a bounded domain, $N\geq 1$. For every $\a\geq\l_1(\Omega)$ fixed, we consider the following variational problems
\[
m_\a=\inf_{u\in\mathcal{U}_\a} \int_\Om |u|^{p+1}\,dx, \qquad M_\a=\sup_{u\in\mathcal{U}_\a} \int_\Om |u|^{p+1}\,dx,
\]
where
\[
\mathcal{U}_\a=\left\{ u\in\spc: \int_\Om u^2\,dx=1,\ \int_\Om |\nabla u|^2\,dx\leq\a \right\}.
\]
As we will see, these definitions of $M_\alpha$ and $m_\alpha$ are equivalent to the ones given in the Introduction.  To start with, we state the following straightforward properties.

\begin{lemma}\label{lemma:U_not_empty_and_compact}
For every fixed $\a\geq\l_1(\Omega)$ it holds
\begin{itemize}
\item[(i)] $\mathcal{U}_\a\neq\emptyset$;
\item[(ii)] $\mathcal{U}_\a$ is weakly compact in $\spc$;
\item[(iii)] the functional $u\mapsto\int_\Om |u|^{p+1}\,dx$ is weakly continuous and bounded in $\mathcal{U}_\a$;
\item[(iv)] $\|u\|_{L^{p+1}(\Omega)}\geq|\Omega|^{-\frac{p-1}{2(p+1)}}$ for every $u\in\mathcal{U}_\a$. 
\end{itemize}
\end{lemma}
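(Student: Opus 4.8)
For (i), the plan is to exhibit an explicit element of $\mathcal{U}_\alpha$. The natural candidate is the first positive Dirichlet eigenfunction $\varphi_1$, normalized so that $\int_\Omega \varphi_1^2\,dx = 1$; then $\int_\Omega |\nabla \varphi_1|^2\,dx = \lambda_1(\Omega) \leq \alpha$ by hypothesis, so $\varphi_1 \in \mathcal{U}_\alpha$ and the set is nonempty. This is immediate and needs no real work.

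For (ii), I would argue that $\mathcal{U}_\alpha$ is bounded, convex, and closed in the strong topology of $H^1_0(\Omega)$, hence weakly closed; being also bounded, it is weakly compact (using that $H^1_0(\Omega)$ is a reflexive Banach space). Boundedness follows since $\|u\|_{H^1_0}^2 = \int_\Omega |\nabla u|^2\,dx \leq \alpha$ for all $u \in \mathcal{U}_\alpha$ (here I use the convention that $\|\cdot\|_{H^1_0}$ is the Dirichlet norm; if instead the full $H^1$ norm is intended, the $L^2$-constraint $\int_\Omega u^2\,dx = 1$ together with the gradient bound still gives boundedness). The constraint $\int_\Omega |\nabla u|^2\,dx \leq \alpha$ defines a closed convex set (sublevel set of a convex, strongly continuous functional), and the affine-looking constraint $\int_\Omega u^2\,dx = 1$ — although not convex — is strongly closed, so the intersection is strongly closed; combined with convexity of the gradient ball this is not needed, one only needs strong closedness plus boundedness plus Mazur's lemma, or simply: a bounded sequence in $\mathcal{U}_\alpha$ has a weakly convergent subsequence $u_n \rightharpoonup u$ in $H^1_0$, which converges strongly in $L^2(\Omega)$ by Rellich--Kondrachov, so $\int_\Omega u^2\,dx = 1$, while weak lower semicontinuity of the norm gives $\int_\Omega |\nabla u|^2\,dx \leq \liminf \int_\Omega |\nabla u_n|^2\,dx \leq \alpha$; hence $u \in \mathcal{U}_\alpha$.

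For (iii), the map $u \mapsto \int_\Omega |u|^{p+1}\,dx$ is weakly continuous on $\mathcal{U}_\alpha$ because, by the Rellich--Kondrachov theorem, $H^1_0(\Omega) \hookrightarrow L^{p+1}(\Omega)$ is a \emph{compact} embedding for every Sobolev-subcritical exponent $p+1 < 2^*$; hence $u_n \rightharpoonup u$ in $H^1_0$ implies $u_n \to u$ strongly in $L^{p+1}(\Omega)$, and the continuity of the $L^{p+1}$-norm gives convergence of the integrals. Boundedness of this functional on $\mathcal{U}_\alpha$ follows from boundedness of $\mathcal{U}_\alpha$ in $H^1_0$ together with continuity of the embedding into $L^{p+1}$. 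For (iv), this is a direct consequence of the $L^2$-constraint and Hölder's (or Jensen's) inequality: writing $1 = \int_\Omega u^2\,dx \leq \left(\int_\Omega |u|^{p+1}\,dx\right)^{2/(p+1)} |\Omega|^{(p-1)/(p+1)}$ by Hölder with exponents $(p+1)/2$ and $(p+1)/(p-1)$, and rearranging, one obtains $\|u\|_{L^{p+1}(\Omega)} \geq |\Omega|^{-(p-1)/(2(p+1))}$.

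None of the four points presents a genuine obstacle; the only place requiring care is (ii)--(iii), where one must invoke the correct compactness theorem (Rellich--Kondrachov in the subcritical range) and be precise about which norm the constraint controls, but this is standard. If anything, the ``main obstacle'' is purely expository: making sure the weak-compactness and weak-continuity statements are stated with the right topology and that the subcriticality of $p$ is used exactly where needed (namely for the compact embedding into $L^{p+1}$).
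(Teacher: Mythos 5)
Your proof is correct and is exactly the standard argument the paper has in mind; indeed the paper states this lemma without proof as a collection of ``straightforward properties,'' and your fill-in (the normalized first eigenfunction for (i), Rellich--Kondrachov plus weak lower semicontinuity of the Dirichlet norm for (ii)--(iii), and H\"older with exponents $\frac{p+1}{2}$, $\frac{p+1}{p-1}$ for (iv)) is precisely what is needed. No gaps.
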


\begin{lemma}\label{lemma:tilde_U_manifold}
For every fixed $\a>\l_1(\Omega)$ the set
\[
\tilde{\mathcal{U}}_\a=\left\{u\in\spc: \int_\Om u^2\,dx=1, \ \int_\Om|\nabla u|^2\,dx=\a, \
\int_\Om u\vphi_1\,dx\neq0 \right\}
\]
is a submanifold of $H^1_0(\Omega)$ of codimension 2.
\end{lemma}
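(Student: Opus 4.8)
The plan is to realize $\tilde{\mathcal{U}}_\a$ as a regular level set of a smooth map into $\R^2$ and invoke the implicit function theorem. First I would introduce
\[
G\colon\spc\to\R^2,\qquad G(u)=\left(\int_\Om u^2\,dx,\ \int_\Om|\nabla u|^2\,dx\right),
\]
which is smooth because each component is a continuous quadratic form on $\spc$, with differential $dG(u)[v]=\bigl(2\int_\Om uv\,dx,\ 2\int_\Om\nabla u\cdot\nabla v\,dx\bigr)$. Setting $\mathcal{O}=\{u\in\spc:\int_\Om u\vphi_1\,dx\neq0\}$, an open subset of $\spc$, one has $\tilde{\mathcal{U}}_\a=(G|_{\mathcal{O}})^{-1}(1,\a)$. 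Hence it suffices to check that $(1,\a)$ is a regular value of $G|_{\mathcal{O}}$, i.e. that $dG(u)$ is onto $\R^2$ for every $u\in\tilde{\mathcal{U}}_\a$; since the codomain is finite-dimensional, the kernel of $dG(u)$ is then closed of codimension $2$, hence complemented, and the regular value theorem gives that $\tilde{\mathcal{U}}_\a$ is a $C^\infty$ submanifold of $\spc$ of codimension $2$.

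The heart of the argument is the surjectivity of $dG(u)$, which I would prove by contradiction. If the range of $dG(u)$ were a proper subspace of $\R^2$, there would exist $(a,b)\in\R^2\setminus\{0\}$ with $a\int_\Om uv\,dx+b\int_\Om\nabla u\cdot\nabla v\,dx=0$ for all $v\in\spc$. The case $b=0$ forces $\int_\Om uv\,dx=0$ for every $v$, hence $u\equiv0$, contradicting $\int_\Om u^2\,dx=1$. If $b\neq0$, the identity says that $u$ is a weak solution in $\spc$ of $-\Delta u=(a/b)u$; testing with $v=u$ yields $a/b=\int_\Om|\nabla u|^2\,dx=\a$, so $u$ is a Dirichlet eigenfunction with eigenvalue $\a$. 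Since $\a>\l_1(\Omega)$ and eigenfunctions relative to distinct eigenvalues are $L^2$-orthogonal, $\int_\Om u\vphi_1\,dx=0$, contradicting $u\in\mathcal{O}$. This proves surjectivity and hence the claim.

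For completeness one should also observe that $\tilde{\mathcal{U}}_\a\neq\emptyset$: choosing a Dirichlet eigenvalue $\l_k>\a$ with $L^2$-normalized eigenfunction $\vphi_k$ and $\theta\in(0,\pi/2)$ such that $\cos^2\theta\,\l_1(\Omega)+\sin^2\theta\,\l_k=\a$, the function $u=\cos\theta\,\vphi_1+\sin\theta\,\vphi_k$ belongs to $\tilde{\mathcal{U}}_\a$.

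I do not expect a serious obstacle: computing $dG$ and its range is elementary. The only delicate point — and the reason $\tilde{\mathcal{U}}_\a$ is defined with the extra requirement $\int_\Om u\vphi_1\,dx\neq0$ — is that whenever $\a$ coincides with a Dirichlet eigenvalue, the two quadratic constraints become tangent exactly along the associated eigenspace; the third condition removes precisely this degenerate locus, since such eigenspaces, being $L^2$-orthogonal to $\vphi_1$, lie outside $\mathcal{O}$.
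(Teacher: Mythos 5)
Your proof is correct and uses the same basic framework as the paper: realize $\tilde{\mathcal{U}}_\a$ as a regular level set of the map $u\mapsto\bigl(\int_\Om u^2\,dx,\int_\Om|\nabla u|^2\,dx\bigr)$ restricted to the open set $\{\int_\Om u\vphi_1\,dx\neq0\}$ and verify that the differential is onto $\R^2$. The only difference is in how surjectivity is checked: the paper does it directly, by testing the differential on the two explicit directions $u$ and $\vphi_1$, which yield the vectors $(1,\a)$ and $\int_\Om u\vphi_1\,dx\cdot(1,\l_1(\Omega))$, linearly independent precisely because $\a>\l_1(\Omega)$ and $\int_\Om u\vphi_1\,dx\neq0$. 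You instead argue by duality/contradiction, showing that a nontrivial annihilator of the range would force $u$ to be a Dirichlet eigenfunction with eigenvalue $\a$, hence $L^2$-orthogonal to $\vphi_1$. Both routes are sound and equally elementary; the paper's is a one-line computation, while yours has the merit of explaining \emph{why} the third condition in the definition of $\tilde{\mathcal{U}}_\a$ is exactly what excludes the degenerate locus (your closing remark makes this explicit). Two trivial points: in your contradiction step the identity $a\int_\Om uv\,dx+b\int_\Om\nabla u\cdot\nabla v\,dx=0$ gives $-\Delta u=-(a/b)u$, so the eigenvalue is $-(a/b)=\a$ rather than $a/b=\a$ — a harmless relabeling; and the nonemptiness observation, while welcome, is not part of this lemma (it is covered by Lemma 2.1(i) of the paper).
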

\begin{proof}
Setting $F(u)=(\int_\Om u^2\,dx-1, \ \int_\Om|\nabla u|^2\,dx)$, it suffices to prove that, for every $u\in\tilde{\mathcal{U}}_\a$, the range of $F'(u)$ is $\R^2$. We have
\[
\frac{F'(u)[u]}{2}=(1,\a), \qquad \frac{F'(u)[\vphi_1]}{2}=\int_\Om u\vphi_1\,dx\cdot(1,\l_1(\Omega)),
\]
which are linearly independent as $\alpha>\lambda_1(\Omega)$.
\end{proof}

\begin{lemma}\label{lemma:m_M_attained}
For every fixed $\a>\l_1(\Omega)$ there exists $u\in \tilde{\mathcal{U}}_\a$, with $u\geq0$, such that $m_\alpha=\int_\Om u^{p+1}\,dx$.
Moreover there exist $\l,\m\in\R$, with $\mu\neq0$, such that
\begin{equation}\label{eq:mu_lambda}
-\Delta u+\lambda u=\m u^p \quad\text{in }\Om.
\end{equation}
A similar result holds for $M_\a$.
\end{lemma}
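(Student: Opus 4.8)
The plan is to carry out the details for $m_\alpha$, the statement for $M_\alpha$ being obtained by the same argument with every infimum/minimum replaced by a supremum/maximum. First I would invoke the direct method: by Lemma~\ref{lemma:U_not_empty_and_compact}, $\mathcal{U}_\alpha$ is nonempty and weakly compact in $H^1_0(\Omega)$, and $v\mapsto\int_\Omega|v|^{p+1}\,dx$ is weakly continuous on it, so $m_\alpha$ is attained at some $u\in\mathcal{U}_\alpha$. Since $|\nabla|u||=|\nabla u|$ a.e., replacing $u$ by $|u|$ leaves all three relevant integrals unchanged, so I may assume $u\ge0$; then $u\not\equiv0$ (because $\int_\Omega u^2\,dx=1$), and $\int_\Omega u\varphi_1\,dx>0$ (because $\varphi_1>0$ in $\Omega$). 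Thus $u$ will belong to $\tilde{\mathcal{U}}_\alpha$ as soon as one knows that the Dirichlet constraint is saturated.

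The core of the proof is to show precisely that $\int_\Omega|\nabla u|^2\,dx=\alpha$, and I expect this to be the only non-formal step. I would argue by contradiction. If $\int_\Omega|\nabla u|^2\,dx<\alpha$, then $\{\,v:\int_\Omega|\nabla v|^2\,dx<\alpha\,\}$ is an open neighbourhood of $u$, so $u$ is a local minimizer of $v\mapsto\int_\Omega|v|^{p+1}\,dx$ on the single-constraint $C^1$-hypersurface $\{\int_\Omega v^2\,dx=1\}$ (whose defining functional has nonzero differential at $u$, sending $u$ to $2$). The Lagrange multiplier rule then gives $\lambda\in\R$ with $(p+1)u^p=2\lambda u$ a.e.\ in $\Omega$, so on $\{u>0\}$ the function $u$ equals a positive constant $c$, i.e.\ $u$ takes only the values $0$ and $c$. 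But a function of $H^1_0(\Omega)$ taking only the values $0$ and $c>0$ has a.e.\ vanishing distributional gradient, hence is locally constant, hence $\equiv0$ by the boundary condition, a contradiction. Therefore $\int_\Omega|\nabla u|^2\,dx=\alpha$, so $u\in\tilde{\mathcal{U}}_\alpha$; and since $\tilde{\mathcal{U}}_\alpha\subset\mathcal{U}_\alpha$, this also shows that $m_\alpha$ equals the infimum of $\int_\Omega|v|^{p+1}\,dx$ over $\tilde{\mathcal{U}}_\alpha$, i.e.\ the quantity defined in the Introduction.

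It remains to write the Euler--Lagrange equation. As $\tilde{\mathcal{U}}_\alpha\subset\mathcal{U}_\alpha$, the point $u$ minimizes $\int_\Omega|v|^{p+1}\,dx$ over $\tilde{\mathcal{U}}_\alpha$, which by Lemma~\ref{lemma:tilde_U_manifold} is a codimension-$2$ submanifold of $H^1_0(\Omega)$ defined by $F(v)=(\int_\Omega v^2\,dx-1,\ \int_\Omega|\nabla v|^2\,dx-\alpha)$ with $F'(u)$ onto $\R^2$. Since $v\mapsto\int_\Omega|v|^{p+1}\,dx$ is $C^1$ on $H^1_0(\Omega)$ (because $p+1<2^*$), the Lagrange multiplier rule yields $a,b\in\R$ such that $-2b\Delta u+2au=(p+1)u^p$ holds weakly in $\Omega$. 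The case $b=0$ is ruled out exactly as in the previous paragraph (it would again force $u$ to take only two values); hence $b\ne0$, and $\lambda:=a/b$, $\mu:=(p+1)/(2b)\ne0$ satisfy \eqref{eq:mu_lambda}. For $M_\alpha$ the argument is verbatim: weak compactness and weak continuity yield a maximizer, the same interior-critical-point dichotomy forces the Dirichlet constraint to be active, and the Lagrange rule on $\tilde{\mathcal{U}}_\alpha$ produces the equation with $\mu\ne0$. No information on the signs of $\mu$ and $\lambda$ is claimed at this stage.
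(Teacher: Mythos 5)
Your proof is correct and follows essentially the same route as the paper's: direct method on the weakly compact set $\mathcal{U}_\alpha$, a contradiction via the single-constraint Lagrange multiplier rule to show the gradient constraint is saturated, and then the two-constraint Lagrange multiplier rule on $\tilde{\mathcal{U}}_\alpha$ with the same dichotomy ruling out a vanishing multiplier on the gradient term. Your level-set argument for why $u$ cannot satisfy $u^p=\mathrm{const}\cdot u$ is in fact slightly more careful than the paper's one-line dismissal, but it is the same idea.
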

\begin{proof}
Let us proof the result for $m_\a$. First, the infimum is attained by a function $u\in\mathcal{U}_\a$ by Lemma \ref{lemma:U_not_empty_and_compact}; by possibly taking $|u|$, we can suppose that $u\geq0$. Let us show that $u\in\tilde{\mathcal{U}}_\a$. Notice that, being $u\geq0$ and $u\not\equiv0$, it holds $\int_\Om u\vphi_1\,dx\neq0$. Assume by contradiction that $\int_\Om|\nabla u|^2\,dx<\a$, then we have
\[
\int_\Om u^{p+1}\,dx=\inf\left\{ \int_\Om |v|^{p+1}\,dx:\ v\in\spc, \ \int_\Om v^2\,dx=1,\ \int_\Omega |\nabla u|^2\, dx<\alpha\right\},
\]
and there exists a Lagrange multiplier $\m\in \R$ so that
\[
\int_\Om u^pz\,dx=\m\int_\Om uz\,dx, \quad\text{for all } z\in\spc.
\]
Hence $\m\equiv u^{p-1}\in \spc$, which contradicts the fact that $\int_\Om u^2\,dx=1$. Therefore $u\in\tilde{\mathcal{U}}_\a$ so that, by Lemma \ref{lemma:tilde_U_manifold}, the Lagrange multiplier theorem applies, thus providing the existence of $k_1,k_2\in\R$ such that
\[
\int_\Omega u^pz\,dx=k_1\int_\Omega\nabla u\cdot \nabla z\,dx+k_2\int_\Omega uz \,dx \quad\text{for all } z\in H^1_0(\Omega).
\]
By the previous argument we see that $k_1\neq0$, hence setting $\m=1/k_1$ and $\l=k_2/k_1$, the proposition is proved.
\end{proof}

\begin{proposition}\label{prop:sign_of_mu_lambda}
Given $\alpha>\lambda_1(\Omega)$, the Lagrange multipliers $\mu,\lambda$ associated to $m_\a$ as in Lemma \ref{lemma:m_M_attained} satisfy $\m<0$, $\lambda<-\lambda_1(\Omega)$. Similarly, in the case of $M_\a$, it holds $\m>0$, $\lambda>-\lambda_1(\Omega)$.
\end{proposition}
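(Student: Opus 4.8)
The plan is to test the Euler--Lagrange equation \eqref{eq:mu_lambda} against two well-chosen functions: the minimizer/maximizer $u$ itself, and the first eigenfunction $\varphi_1$. Testing against $u$ gives the Pohozaev-free identity
\[
\alpha + \lambda = \mu \int_\Omega u^{p+1}\,dx,
\]
using $\int_\Omega |\nabla u|^2 = \alpha$ and $\int_\Omega u^2 = 1$. Testing against $\varphi_1$ gives
\[
\bigl(\lambda_1(\Omega) + \lambda\bigr)\int_\Omega u\varphi_1\,dx = \mu \int_\Omega u^p \varphi_1\,dx.
\]
Since $u\geq 0$, $u\not\equiv 0$ and $\varphi_1>0$ in $\Omega$, both integrals $\int_\Omega u\varphi_1\,dx$ and $\int_\Omega u^p\varphi_1\,dx$ are strictly positive; hence the second identity forces $\lambda + \lambda_1(\Omega)$ and $\mu$ to have the same sign. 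Likewise $\int_\Omega u^{p+1}\,dx>0$, so the first identity shows $\alpha+\lambda$ and $\mu$ have the same sign. So the whole statement reduces to determining the sign of $\mu$ in each case.

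First I would treat the maximization case $M_\alpha$. The idea is a competitor argument: starting from $u$, I will deform it within $\mathcal{U}_\alpha$ (keeping the $L^2$-norm equal to $1$ and the Dirichlet norm $\leq\alpha$) in a direction that strictly increases $\int_\Omega |u|^{p+1}$ unless $\mu>0$. Concretely, since $\alpha>\lambda_1(\Omega)$, the function $u$ is not a multiple of $\varphi_1$, so one can find a small perturbation $u_t$ of $u$ along which one computes $\frac{d}{dt}\big|_{t=0}\int_\Omega |u_t|^{p+1} = (p+1)\mu \int_\Omega u^p \dot u$; choosing $\dot u$ in the tangent space of the two constraints but with $\int_\Omega u^p\dot u$ of a prescribed sign (possible precisely because $u^{p-1}\notin\operatorname{span}\{1\}$ modulo the constraints, as already exploited in the proof of Lemma 2.6), one sees $\mu\neq 0$, and the maximality of $u$ forces the sign $\mu>0$. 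An alternative, perhaps cleaner, route for $M_\alpha$: compare $M_\alpha$ with the value on $\varphi_1$. Since $\alpha>\lambda_1(\Omega)$ there are admissible functions whose $L^{p+1}$-norm exceeds that of $\varphi_1$ — e.g.\ concentrating mass — so $M_\alpha$ is attained at a genuine interior-type critical point and the variational inequality on the constraint $\int|\nabla u|^2\leq\alpha$ gives the sign. Symmetrically, for $m_\alpha$ the minimizing character forces $\mu<0$. Then the two identities above propagate the sign to $\lambda$: $\mu<0\Rightarrow \lambda<-\lambda_1(\Omega)$ for $m_\alpha$, and $\mu>0\Rightarrow \lambda>-\lambda_1(\Omega)$ for $M_\alpha$.

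The main obstacle I anticipate is making the perturbation argument rigorous while respecting \emph{both} constraints simultaneously, in particular the one-sided Dirichlet constraint $\int|\nabla u|^2\leq\alpha$: one must check that the inequality constraint is active (which is exactly the content of $u\in\tilde{\mathcal{U}}_\alpha$ from Lemma 2.6) and then argue that pushing in the right direction keeps $u_t$ admissible to first order. The sign of $\mu$ does not follow from the equation alone — it is genuinely a consequence of $u$ being an extremizer rather than an arbitrary critical point — so the argument must use the extremal property, e.g.\ by exhibiting an explicit admissible family $u_t$ with $\int_\Omega|u_t|^{p+1}$ strictly monotone in $t$ near $0$ unless $\mu$ has the claimed sign. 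Once $\mu$'s sign is pinned down, the rest is the two short testing identities displayed above and the positivity of $u$ and $\varphi_1$.
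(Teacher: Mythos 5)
Your overall strategy coincides with the paper's: the sign of $\lambda+\lambda_1(\Omega)$ is obtained by testing the equation against $\varphi_1$ exactly as you write, and the sign of $\mu$ is extracted from the extremal property via an admissible one-parameter deformation. However, the first concrete mechanism you propose for the sign of $\mu$ would fail. If $\dot u$ lies in the tangent space of \emph{both} constraints, i.e.\ $\int_\Omega u\dot u\,dx=\int_\Omega\nabla u\cdot\nabla\dot u\,dx=0$, then the Euler--Lagrange identity $\int_\Omega u^p z\,dx=k_1\int_\Omega\nabla u\cdot\nabla z\,dx+k_2\int_\Omega uz\,dx$ forces $\int_\Omega u^p\dot u\,dx=0$ automatically: there is no freedom to prescribe its sign, and the first variation of $\int_\Omega|u|^{p+1}\,dx$ along such directions vanishes identically. (The fact you invoke from Lemma~\ref{lemma:m_M_attained}, that $u^{p-1}$ is not constant, only yields $\mu\neq0$, not its sign.) The sign information lives precisely in the directions you only gesture at in your last paragraph: those which preserve $\int_\Omega u^2\,dx=1$ but strictly change $\int_\Omega|\nabla u|^2\,dx$, i.e.\ which move into the interior of the one-sided constraint $\int_\Omega|\nabla u|^2\,dx\le\alpha$.

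The paper realizes this with $w(t)=tu+s(t)\varphi_1$, where $s(1)=0$ and $s$ is chosen by the Implicit Function Theorem so that $\int_\Omega w(t)^2\,dx=1$; then $w'(1)=u-\varphi_1/\int_\Omega u\varphi_1\,dx$ and $\tfrac12\tfrac{d}{dt}\int_\Omega|\nabla w(t)|^2\,dx|_{t=1}=\alpha-\lambda_1(\Omega)>0$, so that $w(t)\in\mathcal U_\alpha$ for $t\in(1-\eps,1]$. Minimality gives $\tfrac{d}{dt}\int_\Omega|w(t)|^{p+1}\,dx|_{t=1}\le0$, while the equation (using $\int_\Omega u\,w'(1)\,dx=0$) gives $\tfrac{\mu}{p+1}\tfrac{d}{dt}\int_\Omega|w(t)|^{p+1}\,dx|_{t=1}=\tfrac12\tfrac{d}{dt}\int_\Omega|\nabla w(t)|^2\,dx|_{t=1}>0$; hence $\mu<0$, and symmetrically $\mu>0$ for $M_\alpha$. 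This is the computation your plan needs to make explicit, and it is exactly where the hypothesis $\alpha>\lambda_1(\Omega)$ enters, guaranteeing that such an inward direction exists.
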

\begin{proof}
Let $(u,\l,\m)$ be any triplet associated to $m_\a$ as in Lemma \ref{lemma:m_M_attained}. We will prove that
$\mu<0$. Set
\[
w(t)=tu+s(t)\vphi_1,
\]
where $t\in\R$ is close to $1$, $s(1)=0$, and $s(t)$ is such that
\begin{equation}\label{eq:sign_of_mu0}
1=\int_\Om w(t)^2\,dx=t^2+2t s(t)\int_\Om u\vphi_1\,dx+s(t)^2.
\end{equation}
Since
\[
\left.\partial_s\left(t^2+2ts\int_\Om u\vphi_1\,dx+s^2\right)\right|_{(t,s)=(1,0)}=2\int_\Om u\vphi_1\,dx\neq0,
\]
the Implicit Function Theorem applies, providing that the map $t\mapsto w(t)$ is of class $C^1$ in a neighborhood of $t=1$. Differentiating \eqref{eq:sign_of_mu0} with respect to $t$ at $t=1$, we obtain
\begin{equation*}
0=\int_\Om w'(1)w(1)\,dx=\int_\Om w'(1)u\,dx = 1 + s'(1)\int_\Omega u \vphi_1\, dx,
\end{equation*}
which implies $s'(1)=-1/\int_\Omega u\vphi_1\,dx$ and
$w'(1)=u-\vphi_1/\int_\Omega u\vphi_1\,dx$. Thus
\begin{equation}\label{eq:sign_of_mu3}
\begin{split}
\left.\frac{1}{2}\frac{d}{dt} \int_\Omega |\nabla w(t)|^2\, dx\right|_{t=1}&= \int_\Om\nabla u\cdot\nabla w'(1)\,dx \\
& = \int_\Omega |\nabla u|^2\, dx-\frac{\int_\Omega \nabla u\cdot \nabla \vphi_1\, dx}{\int_\Omega u\vphi_1\, dx}=\alpha-\lambda_1(\Omega)>0.
\end{split}
\end{equation}
In particular, this implies the existence of $\eps>0$ such that $w(t)\in\mathcal{U}_\a$ for $t\in (1-\eps, 1]$. Therefore, by the definition of $m_\a$, $\|w(1)\|_{p+1}\leq\|w(t)\|_{p+1}$ for every $t\in (1-\eps,1]$,
and
\begin{equation}\label{eq:sign_of_mu4}
\frac{d}{dt}\int_\Om|w(t)|^{p+1}\,dx|_{t=1}\leq 0
\end{equation}
On the other hand, using  \eqref{eq:mu_lambda} and the fact that $\int_\Omega uw'(1)\,dx=0$, we have
\[
\begin{split}
\left.\frac{\m}{p+1} \frac{d}{dt}\int_\Om|w(t)|^{p+1}\,dx\right|_{t=1}
&=\m\int_\Om u^p w'(1)\,dx=\int_\Om(-\Delta u + \lambda u)w'(1)\,dx\\
= &\int_\Om\nabla u\cdot\nabla w'(1)\,dx
=\left.\frac{1}{2}\frac{d}{dt}\int_\Om|\nabla w(t)|^2\,dx\right|_{t=1}>0.
\end{split}
\]
by \eqref{eq:sign_of_mu3}. By comparing with \eqref{eq:sign_of_mu4} we obtain that $\m<0$.

The case of $M_\a$ can be handled in the same way, obtaining that in such situation $\mu>0$.
Finally, by multiplying equation \eqref{eq:mu_lambda} by $\vphi_1$, we obtain
\[
(\lambda_1(\Omega)+\lambda)\int_\Omega u \vphi_1\, dx= \mu \int_\Omega u^p \vphi_1 \,dx.
\]
As $u,\vphi_1\geq 0$, we deduce that $\lambda_1(\Omega)+\lambda$ has the same sign than $\mu$.
\end{proof}



We conclude this section with the following boundedness result, which we will need later on.

\begin{lemma}\label{lemma:case_alpha_n_bounded}
Take a sequence $\{(u_n,\mu_n,\lambda_n)\}_n$ such that
$$
\int_\Omega u_n^2\, dx=1,\qquad \int_\Omega |\nabla u_n|^2\, dx=:\alpha_n \text{ is bounded,}
$$
and
\begin{equation}\label{eq:auxiliary_n}
-\Delta u_n+\lambda_n u_n=\mu_n u_n^p.
\end{equation}
Then the sequences $\{\lambda_n\}_n$ and $\{\mu_n\}_n$ are bounded.
\end{lemma}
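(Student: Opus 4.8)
The plan is to extract information by testing the equation \eqref{eq:auxiliary_n} against two well-chosen functions: $u_n$ itself, and the first eigenfunction $\varphi_1$. First I would note that, since $\alpha_n=\int_\Omega|\nabla u_n|^2\,dx$ is bounded and $\int_\Omega u_n^2\,dx=1$, the sequence $\{u_n\}$ is bounded in $H^1_0(\Omega)$; hence by the compact Sobolev embedding (recall $p<2^*-1$) the quantities $\int_\Omega u_n^{p+1}\,dx$ and $\int_\Omega |u_n|^{p}|u_n|\,dx$ are bounded, and in fact — after possibly passing to a subsequence — $u_n\rightharpoonup u$ weakly in $H^1_0(\Omega)$ and strongly in $L^{p+1}(\Omega)$, with $\int_\Omega u^2\,dx=1$, so $u\not\equiv 0$.

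The key step is the following pair of identities. Testing \eqref{eq:auxiliary_n} with $u_n$ gives
\[
\alpha_n+\lambda_n=\mu_n\int_\Omega |u_n|^{p+1}\,dx,
\]
while testing with $\varphi_1$ (using $-\Delta\varphi_1=\lambda_1(\Omega)\varphi_1$) gives
\[
(\lambda_1(\Omega)+\lambda_n)\int_\Omega u_n\varphi_1\,dx=\mu_n\int_\Omega u_n^p\,\varphi_1\,dx.
\]
From the first identity, boundedness of $\{\lambda_n\}$ is equivalent to boundedness of $\{\mu_n\int_\Omega|u_n|^{p+1}\,dx\}$, and since $\int_\Omega|u_n|^{p+1}\,dx$ is bounded away from $0$ by Lemma \ref{lemma:U_not_empty_and_compact}(iv), boundedness of $\{\lambda_n\}$ is in turn equivalent to boundedness of $\{\mu_n\}$. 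So it suffices to bound one of the two sequences; I will bound $\{\mu_n\}$ using the second identity. Dividing by $\mu_n$ (assume, for contradiction, $|\mu_n|\to\infty$ along a subsequence) and using the first identity to substitute $\lambda_n=\mu_n\int_\Omega|u_n|^{p+1}\,dx-\alpha_n$, one gets
\[
\Big(\lambda_1(\Omega)-\alpha_n+\mu_n\!\int_\Omega|u_n|^{p+1}\,dx\Big)\!\int_\Omega u_n\varphi_1\,dx=\mu_n\!\int_\Omega u_n^p\varphi_1\,dx,
\]
hence, dividing by $\mu_n$,
\[
\frac{\lambda_1(\Omega)-\alpha_n}{\mu_n}\int_\Omega u_n\varphi_1\,dx+\Big(\!\int_\Omega|u_n|^{p+1}\,dx\Big)\!\int_\Omega u_n\varphi_1\,dx=\int_\Omega u_n^p\varphi_1\,dx.
\]
Passing to the limit (the first term vanishes since $\alpha_n$ is bounded and $|\mu_n|\to\infty$, while all the other integrals converge by the strong $L^{p+1}$ convergence of $u_n$ to $u$, with $\varphi_1\in L^\infty$), I obtain
\[
\Big(\!\int_\Omega u^{p+1}\,dx\Big)\!\int_\Omega u\varphi_1\,dx=\int_\Omega u^p\varphi_1\,dx.
\]
This last relation is not yet a contradiction by itself, so the argument needs to be closed differently: rather than just taking limits, one should exploit that $u_n\rightharpoonup u$ solves a limiting equation. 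The main obstacle, and the step requiring care, is precisely this passage to the limit in \eqref{eq:auxiliary_n}. The cleanest route is: set $\nu_n:=\lambda_n/\mu_n$; if $|\mu_n|\to\infty$ then from $\alpha_n+\lambda_n=\mu_n\int_\Omega|u_n|^{p+1}\,dx$ we get $\nu_n\to\int_\Omega u^{p+1}\,dx=:c>0$, and dividing \eqref{eq:auxiliary_n} by $\mu_n$ yields $-\mu_n^{-1}\Delta u_n+\nu_n u_n=u_n^p$; since $\mu_n^{-1}\Delta u_n\to 0$ in $H^{-1}$ (as $\{u_n\}$ is $H^1_0$-bounded) and $u_n^p\to u^p$ in $L^{(p+1)/p}\hookrightarrow H^{-1}$, while $\nu_n u_n\rightharpoonup c\,u$ in $L^2$, we conclude in the distributional sense that $c\,u=u^p$, i.e. $u\equiv c^{1/(p-1)}$ a.e. on $\Omega$. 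But $u\in H^1_0(\Omega)$ is a nonzero constant, which is impossible on a bounded domain. This contradiction shows $\{\mu_n\}$ is bounded, and then $\{\lambda_n\}$ is bounded by the first identity together with Lemma \ref{lemma:U_not_empty_and_compact}(iv). I expect the only delicate points to be the bookkeeping of which convergences are strong versus weak and verifying $\mu_n^{-1}\Delta u_n\to 0$ in $H^{-1}$, both of which are routine given the uniform $H^1_0$ bound.
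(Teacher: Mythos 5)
Your proof is correct and follows essentially the same route as the paper's: test with $u_n$ to get $\alpha_n+\lambda_n=\mu_n\int_\Omega|u_n|^{p+1}\,dx$ (so boundedness of one multiplier sequence gives the other), extract $u_n\to u\not\equiv 0$ in $L^{p+1}$, and, assuming $|\mu_n|\to\infty$, divide the equation by $\mu_n$ and pass to the limit to obtain $u^p=cu$ with $c=\int_\Omega u^{p+1}\,dx$, which forces $u$ to be a nonzero constant in $H^1_0(\Omega)$ — impossible. The detour through $\varphi_1$ is superfluous (as you note yourself), and the step from $cu=u^p$ to "$u$ is constant" deserves the one-line remark that an $H^1$ function taking finitely many values has vanishing gradient a.e., but this is the same level of detail the paper leaves implicit.
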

\begin{proof}
By multiplying \eqref{eq:auxiliary_n} by $u_n$ we see that
$$
\alpha_n+\lambda_n=\mu_n\int_\Omega u_n^{p+1}\, dx,
$$
thus if one the sequences $\{\lambda_n\}_n,\ \{\mu_n\}_n$ is bounded, the other is also bounded. Recall that, by assumption, $u_n$ is bounded in $H^1_0(\Omega)$, hence it converges in the $L^{p+1}$-norm to some $u\in H^1_0(\Omega)$, up to a subsequence. Moreover $u\not\equiv 0$, as $\int_\Omega u^2\, dx=1$.

To fix ideas, suppose without loss of generality that $\mu_n\to +\infty$ and that $\lambda_n\to +\infty$. From the previous identity we also have that
$$
\frac{\lambda_n}{\mu_n}=\int_\Omega u_n^{p+1}\, dx-\frac{\alpha_n}{\mu_n} \to \int_\Omega u^{p+1}\, dx=: \gamma\neq0,
$$
up to a subsequence. Now take any $\vphi\in H^1_0(\Omega)$ and use it as test function in \eqref{eq:auxiliary_n}. We obtain
\[
\begin{split}
\int_\Omega \nabla u_n\cdot \nabla \vphi\, dx &=\mu_n\int_\Omega u_n^p\vphi\, dx-\lambda_n\int_\Omega u_n\vphi\, dx\\
&=\mu_n \left(\int_\Omega u_n^p \vphi\,dx-\frac{\lambda_n}{\mu_n}\int_\Omega u_n \vphi\, dx\right).
\end{split}
\]
As $\mu_n\to +\infty$, we must have
$$
\int_\Omega u_n^p \vphi\, dx-\frac{\lambda_n}{\mu_n}\int_\Omega u_n \vphi\, dx\to 0.
$$
On the other hand,
$$
\int_\Omega u_n^p\vphi\,dx -\frac{\lambda_n}{\mu_n}\int_\Omega u_n \vphi\, dx\to
 \int_\Omega u^p\vphi\, dx-\gamma\int_\Omega u \vphi \,dx.
$$
Thus we have $u^p\equiv \gamma u$, which is a contradiction.
\end{proof}

\section{Asymptotics as $\alpha\to \lambda_1(\Omega)^+$}\label{section:Ambrosetti-Prodi}

In this section we will completely describe the solutions of the problem
\begin{equation}\label{eq:main_problem}
-\Delta u+\lambda u=\mu u^p, \ u\in H^1_0(\Omega),\ u>0,\qquad \int_\Omega u^2\, dx=1,
\end{equation}
for $\alpha:=\int_\Omega |\nabla u|^2\, dx$ in a (right) neighborhood of $\lambda_1(\Omega)$. For that we will follow the theory presented in \cite[Section 3.2]{AmbrosettiProdiBook}, which we now briefly recall.

\begin{definition}
Let $X,Y$ be Banach spaces, $U\subseteq X$ an open set and $\Phi\in C^2(U,Y)$. A point $x\in U$ is said to be \emph{ordinary singular} for $\Phi$ if
\begin{itemize}
\item[(a)] $\Ker (\Phi'(x))$ is one dimensional, spanned by a certain $\phi\in X$;
\item[(b)] $R(\Phi'(x))$ is closed and has codimension one;
\item[(c)] $\Phi''(x)[\phi,\phi]\notin R(\Phi'(x))$;
\end{itemize}
where $\Ker(\Phi'(x))$ and $R(\Phi'(x))$ denote respectively the kernel and the range of the map $\Phi'(x):X\to Y$.
\end{definition}

We will need the following result.

\begin{theorem}[{\cite[Section 3.2, Lemma 2.5]{AmbrosettiProdiBook}}]\label{thm:AmbrosettiProdi}
Under the previous notations, let $x^\ast\in U$ be an ordinary singular point for $\Phi$. Take $y^\ast=\Phi(x^\ast)$, $\phi\in X$ such that $\Ker(\Phi'(x^\ast))=\R \phi$, $\Psi\in Y^\ast$ such that $R(\Phi'(x^\ast))=\Ker (\Psi)$ and consider $z\in Z$ such that $\Psi(z)=1$, where $Y=Z\oplus \Ker(\Phi'(x^\ast))$. Suppose that
$$
\Psi(\Phi''(x^\ast)[\phi,\phi])>0.
$$
Then there exist $\eps^\ast,\delta>0$ such that the equation
\[
\Phi(x)=y^\ast+\eps z, \qquad x\in B_\delta(x^\ast)
\]
has exactly two solutions for each $0<\eps<\eps^\ast$, and no solutions for all $-\eps^\ast<\eps<0$. Moreover, there exists $\sigma>0$ such that the solutions can be parameterized with a parameter $t\in (-\sigma, \sigma)$, $t\mapsto x(t)$ is a $C^1$ map and
\begin{equation}\label{eq:ambrosetti_prodi_expansion}
x(t)=x^*+t\phi+o(\sqrt{\eps}) \qquad \text{ with } \quad t=\pm\sqrt{\frac{2\eps}{ \Psi(\Phi''(x^*)[\phi,\phi])}}.
\end{equation}
\end{theorem}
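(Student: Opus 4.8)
The statement describes a \emph{fold} (Whitney) singularity of $\Phi$ at $x^*$, and my plan is to prove it by a Lyapunov--Schmidt reduction to a one-dimensional equation. Write $\R\phi=\Ker(\Phi'(x^*))$ and $R=R(\Phi'(x^*))$. Since $\R\phi$ is finite dimensional it admits a closed complement $X_1$, so $X=\R\phi\oplus X_1$; by (b), $R$ is closed of codimension one, hence $Y=R\oplus Z$ with $Z=\R z$, and $\Psi$ vanishes exactly on $R$ with $\Psi(z)=1$. Let $P\colon Y\to R$ be the projection $Py=y-\Psi(y)z$. Every $x$ near $x^*$ is uniquely $x=x^*+t\phi+w$ with $t\in\R$ and $w\in X_1$ small, and $\Phi(x)=y^*+\eps z$ is then equivalent to the pair
\[
P\big[\Phi(x^*+t\phi+w)-y^*\big]=0,\qquad \Psi\big[\Phi(x^*+t\phi+w)-y^*\big]=\eps .
\]

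First I would solve the auxiliary (first) equation by the Implicit Function Theorem. The map $G(t,w)=P[\Phi(x^*+t\phi+w)-y^*]$ is $C^2$ near $(0,0)$, with $G(0,0)=0$ and $D_wG(0,0)=P\,\Phi'(x^*)|_{X_1}$. By (a), $\Phi'(x^*)$ annihilates $\phi$, so $\Phi'(x^*)|_{X_1}\colon X_1\to R$ is a continuous bijection between Banach spaces, hence an isomorphism by the bounded inverse theorem, and $P$ restricts to the identity on $R$; thus $D_wG(0,0)$ is invertible. The IFT provides $\delta,\sigma>0$ and a $C^2$ map $t\mapsto w(t)$, $w(0)=0$, giving the unique small solution of $G(t,\cdot)=0$; differentiating $G(t,w(t))\equiv0$ at $t=0$ and using $D_tG(0,0)=P\,\Phi'(x^*)\phi=0$ yields $w'(0)=0$. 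After shrinking the constants, solutions of $\Phi(x)=y^*+\eps z$ in $B_\delta(x^*)$ thus correspond bijectively, via $x=x^*+t\phi+w(t)$, to solutions in $(-\sigma,\sigma)$ of the scalar \emph{bifurcation equation} $g(t)=\eps$, where $g(t)=\Psi[\Phi(x^*+t\phi+w(t))-y^*]$.

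Next I would analyse the $C^2$ function $g$. One gets $g(0)=0$ and $g'(0)=\Psi[\Phi'(x^*)(\phi+w'(0))]=\Psi[\Phi'(x^*)\phi]=0$ (using $w'(0)=0$ and $\Phi'(x^*)\phi=0$), while
\[
g''(0)=\Psi\big[\Phi''(x^*)[\phi,\phi]+\Phi'(x^*)w''(0)\big]=\Psi\big(\Phi''(x^*)[\phi,\phi]\big)=:c>0,
\]
because $\Phi'(x^*)w''(0)\in R=\Ker\Psi$ and by the sign hypothesis (meaningful exactly because of (c)). Shrinking $\sigma$ so that $g''\ge c/2>0$ on $(-\sigma,\sigma)$, the derivative $g'$ is strictly increasing with $g'(0)=0$, so $g$ strictly decreases on $(-\sigma,0]$ and strictly increases on $[0,\sigma)$, with minimum $g(0)=0$. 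Hence for $\eps^*>0$ small and $0<\eps<\eps^*$ the equation $g(t)=\eps$ has exactly two solutions $t_-(\eps)<0<t_+(\eps)$, exactly one ($t=0$) for $\eps=0$, and none for $-\eps^*<\eps<0$; combined with the reduction this is the claimed count. Finally, Taylor's formula gives $g(t)=\tfrac{c}{2}t^2+o(t^2)$, so $t_\pm(\eps)=\pm\sqrt{2\eps/c}\,(1+o(1))$; parameterizing the two branches by $t$ with $\eps=g(t)$, the curve $x(t)=x^*+t\phi+w(t)$ is $C^1$ (indeed $C^2$) and equals $x^*+t\phi+o(t)=x^*+t\phi+o(\sqrt\eps)$ with $t=\pm\sqrt{2\eps/\Psi(\Phi''(x^*)[\phi,\phi])}$ up to higher order, i.e.\ \eqref{eq:ambrosetti_prodi_expansion}.

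There is no genuinely hard step: this is a textbook fold argument. The only points requiring care are (i) checking that both $\R\phi$ and $R(\Phi'(x^*))$ are complemented, so that the Lyapunov--Schmidt splitting is legitimate, and (ii) carrying enough regularity through the reduction, namely $w\in C^2$ (which uses $\Phi\in C^2$), so that the identity $g''(0)=\Psi(\Phi''(x^*)[\phi,\phi])$ is rigorous. This is precisely where the three properties of an ordinary singular point are used: (a) and (b) make the reduction work, while (c), through the sign assumption, forces the reduced equation to be a non-degenerate quadratic fold.
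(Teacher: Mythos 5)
This theorem is quoted verbatim from Ambrosetti--Prodi's book (\cite[Section 3.2, Lemma 2.5]{AmbrosettiProdiBook}) and the paper supplies no proof of its own, so there is nothing internal to compare against. Your Lyapunov--Schmidt reduction to the scalar fold equation $g(t)=\eps$ is correct and complete, and it is essentially the standard argument of the cited reference: the splitting $X=\R\phi\oplus X_1$, $Y=R\oplus\R z$ is legitimate because finite-dimensional and closed finite-codimensional subspaces are complemented, the auxiliary equation is solved by the IFT with a $C^2$ branch $w(t)$ satisfying $w'(0)=0$, and the computation $g''(0)=\Psi(\Phi''(x^*)[\phi,\phi])>0$ together with the resulting strict convexity of $g$ near $0$ gives both the solution count and the expansion \eqref{eq:ambrosetti_prodi_expansion}.
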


Let us now set the framework which will allow us to apply the previous results to our situation. Given $k>N$, consider $X=\{w\in W^{2,k}(\Omega):\ w=0 \text{ on } \partial \Omega\}$, $Y=L^k(\Omega)$ and $U=\{w\in X:\ w>0 \text{ in } \Omega \text{ and } \partial_\nu w<0 \text{ on } \partial \Omega\}$. Take $\Phi:X\times \R^2\to L^k(\Omega)\times \R^2$ defined by
\begin{equation}\label{eq:Phi_definition}
\Phi(u,\mu,\lambda)=\left(\Delta u - \lambda u+ \mu u^p ,\ \int_\Omega u^2\, dx-1,\ \int_\Omega |\nabla u|^2\, dx\right).
\end{equation}
\begin{remark}\label{rem:fisitu}
$\Phi\in C^2(U,Y)$. This is immediate when $p\geq2$, and it also holds true for $1<p<2$. We postpone to Appendix \ref{app:B} the proof of this fact.
\end{remark}
We start with the following result.
\begin{lemma}\label{lemma:alpha_to_lambda_1}
Let $\alpha_n\to \lambda_1(\Omega)^+$ and suppose there exists $(u_n,\mu_n,\lambda_n)$ such that $\Phi(u_n,\mu_n,\lambda_n)=(0,0,\alpha_n)$ with $u_n\geq 0$. Then $u_n\to \vphi_1$ in $H^1_0(\Omega)$, $\mu_n\to 0$ and $\lambda_n\to -\lambda_1(\Omega)$. In particular,
$$
\Phi(u,\mu,\lambda)=(0,0,\lambda_1(\Omega)),\ u\geq 0\quad \text{ if and only if } (u,\mu, \lambda)=(\vphi_1,0,-\lambda_1(\Omega)).
$$
\end{lemma}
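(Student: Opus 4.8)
The plan is to exploit the fact that $\alpha_n = \int_\Omega |\nabla u_n|^2\,dx \to \lambda_1(\Omega)$ while $\int_\Omega u_n^2\,dx = 1$, which forces $u_n$ to converge to the first eigenfunction. Concretely, first I would observe that $\{u_n\}$ is bounded in $H^1_0(\Omega)$, so up to a subsequence $u_n \rightharpoonup u$ weakly in $H^1_0$ and strongly in $L^2$ (hence $\int_\Omega u^2\,dx = 1$, so $u\not\equiv 0$) and in $L^{p+1}$. By weak lower semicontinuity of the Dirichlet norm, $\int_\Omega |\nabla u|^2\,dx \leq \liminf \alpha_n = \lambda_1(\Omega)$. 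Combined with the variational characterization $\lambda_1(\Omega) = \inf\{\int |\nabla v|^2 : \int v^2 = 1\}$ and $\int u^2 = 1$, this gives $\int_\Omega |\nabla u|^2\,dx = \lambda_1(\Omega)$, so $u$ is a minimizer of the Rayleigh quotient and thus $u = \pm\varphi_1$; since $u_n \geq 0$ we get $u = \varphi_1$. Moreover $\|\nabla u_n\|_{L^2}^2 \to \lambda_1(\Omega) = \|\nabla \varphi_1\|_{L^2}^2$, which upgrades weak convergence to strong convergence $u_n \to \varphi_1$ in $H^1_0(\Omega)$.

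Next I would pin down the multipliers. Testing the equation $-\Delta u_n + \lambda_n u_n = \mu_n u_n^p$ with $u_n$ yields $\alpha_n + \lambda_n = \mu_n \int_\Omega u_n^{p+1}\,dx$, and testing with $\varphi_1$ gives $(\lambda_1(\Omega) + \lambda_n)\int_\Omega u_n \varphi_1\,dx = \mu_n \int_\Omega u_n^p \varphi_1\,dx$. Lemma \ref{lemma:case_alpha_n_bounded} applies (the $\alpha_n$ are bounded), so $\{\lambda_n\}$ and $\{\mu_n\}$ are bounded; pass to a further subsequence with $\lambda_n \to \lambda$, $\mu_n \to \mu$. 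Passing to the limit in the two scalar identities, using $u_n \to \varphi_1$ in $H^1_0$ and in $L^{p+1}$: the first gives $\lambda_1(\Omega) + \lambda = \mu\int_\Omega \varphi_1^{p+1}\,dx$, the second gives $(\lambda_1(\Omega)+\lambda)\int_\Omega \varphi_1^2\,dx = \mu\int_\Omega \varphi_1^p\varphi_1\,dx$, i.e. $\lambda_1(\Omega) + \lambda = \mu\int_\Omega \varphi_1^{p+1}\,dx$ again — consistent but not yet conclusive. To separate $\mu$ and $\lambda$, I would instead pass to the limit in the full PDE tested against a fixed $\varphi \in H^1_0(\Omega)$: $\int_\Omega \nabla u_n\cdot\nabla\varphi\,dx + \lambda_n\int_\Omega u_n\varphi\,dx = \mu_n\int_\Omega u_n^p\varphi\,dx$ converges to $\int_\Omega \nabla\varphi_1\cdot\nabla\varphi\,dx + \lambda\int_\Omega \varphi_1\varphi\,dx = \mu\int_\Omega \varphi_1^p\varphi\,dx$. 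Since $\varphi_1$ solves $-\Delta\varphi_1 = \lambda_1(\Omega)\varphi_1$, the left side equals $(\lambda_1(\Omega)+\lambda)\int_\Omega\varphi_1\varphi\,dx$, so $(\lambda_1(\Omega)+\lambda)\varphi_1 = \mu\varphi_1^p$ as elements of $H^{-1}$, hence pointwise. Because $\varphi_1 > 0$ in $\Omega$ and $p > 1$, the function $\varphi_1^{p-1}$ is non-constant (it vanishes on $\partial\Omega$ but not in the interior), so the identity $(\lambda_1(\Omega)+\lambda) = \mu\varphi_1^{p-1}$ can hold everywhere only if $\mu = 0$ and $\lambda_1(\Omega)+\lambda = 0$, i.e. $\mu = 0$, $\lambda = -\lambda_1(\Omega)$.

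Finally, since every subsequence has a further subsequence along which $(u_n,\mu_n,\lambda_n) \to (\varphi_1, 0, -\lambda_1(\Omega))$, the full sequence converges, proving the first assertion. The "in particular" statement is the special case $\alpha_n \equiv \lambda_1(\Omega)$: if $\Phi(u,\mu,\lambda) = (0,0,\lambda_1(\Omega))$ with $u \geq 0$, then the constant sequence forces $u = \varphi_1$, $\mu = 0$, $\lambda = -\lambda_1(\Omega)$; the converse is a direct check, since $-\Delta\varphi_1 + (-\lambda_1(\Omega))\varphi_1 = 0 = 0\cdot\varphi_1^p$, $\int\varphi_1^2 = 1$, and $\int|\nabla\varphi_1|^2 = \lambda_1(\Omega)$. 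The only delicate point is the separation of the multipliers: one must use that $\varphi_1^{p-1}$ is genuinely non-constant on $\Omega$ (equivalently, that $u^{p-1}$ cannot be constant for a nontrivial $u \in H^1_0(\Omega)$ when $p > 1$), exactly the same mechanism already used in the proof of Lemma \ref{lemma:m_M_attained}.
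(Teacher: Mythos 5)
Your proposal is correct and follows essentially the same route as the paper: weak compactness plus the Rayleigh-quotient characterization of $\lambda_1(\Omega)$ to identify the limit as $\varphi_1$ and upgrade to strong $H^1_0$ convergence, then Lemma \ref{lemma:case_alpha_n_bounded} to bound the multipliers and pass to the limit in the equation, where $(\lambda_1(\Omega)+\lambda)\varphi_1=\mu\varphi_1^p$ forces $\mu=0$ and $\lambda=-\lambda_1(\Omega)$. You merely spell out the last step (non-constancy of $\varphi_1^{p-1}$) more explicitly than the paper does.
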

\begin{proof}
As $u_n$ is bounded in $H^1_0(\Omega)$, then up to a subsequence we have that $u_n \rightharpoonup u$ weakly in $H^1_0(\Omega)$. Moreover $\int_\Omega u^2\, dx=1$, $u\geq 0$ and, by the Poincar\'e inequality, $\lambda_1(\Omega)\leq \int_\Omega |\nabla u|^2\leq \liminf \int_\Omega |\nabla u_n|^2\, dx=\lambda_1(\Omega)$, whence $u=\vphi_1$ and the whole sequence $u_n$ converges strongly to $\vphi_1$ in $H^1_0(\Omega)$. By Lemma \ref{lemma:case_alpha_n_bounded} we have that $\mu_n$ and $\lambda_n$ are bounded. Denote by $\mu_\infty$ and $\lambda_\infty$ a limit of one of its subsequences. Then
$$
-\Delta \vphi_1+\lambda_\infty \vphi_1=\mu_\infty \vphi_1^p,
$$
which shows that $\mu_\infty=0$, $\lambda_\infty=-\lambda_1(\Omega)$.
\end{proof}

\begin{lemma} The point $(\vphi_1,0,-\lambda_1(\Omega))\in U$ is ordinary singular for $\Phi$. More precisely, for $L:= \Phi'(\vphi_1,0,-\lambda_1(\Omega)):X\times \R^2\to L^k(\Omega)\times \R^2$, we have
\begin{itemize}
\item[(i)] $\Ker(L)= \mathrm{span}\left\{ \left(\psi,\ 1,\ \int_\Omega \vphi_1^{p+1}\,dx\right)\right\}=:\mathrm{span}\{\phi\}$, where $\psi\in X$ is the unique solution of
\begin{equation}\label{eq:psi}
-\Delta \psi-\lambda_1(\Omega) \psi=\vphi_1^p-\vphi_1 \int_\Omega \vphi_1^{p+1}\,dx, \ \text{ such that }
\int_\Omega \psi \vphi_1\, dx=0;
\end{equation}
\item[(ii)] $R(L)=\Ker(\Psi)$, with $\Psi:L^k(\Omega)\times \R^2\to \R$ defined by $\Psi(\xi,h,k)=k-\lambda_1(\Omega)h$;
\item[(iii)] $\Psi(\Phi''(\vphi_1,0,-\lambda_1(\Omega))[\phi,\phi])>0.$
\end{itemize}
\end{lemma}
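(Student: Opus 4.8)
The plan is to verify the three defining properties of an ordinary singular point by computing $L=\Phi'(\vphi_1,0,-\lambda_1(\Omega))$ explicitly and then analyzing the linear operator $-\Delta-\lambda_1(\Omega)$ on $X$. Writing $\Phi$ as in \eqref{eq:Phi_definition}, differentiation at $(\vphi_1,0,-\lambda_1(\Omega))$ in the direction $(v,\nu,\kappa)$ gives
\[
L[v,\nu,\kappa]=\left(\Delta v-\lambda_1(\Omega)v\,\nu\cdot 0+\kappa\cdot 0\ldots\right)
\]
— more carefully, since $\mu=0$ at the base point, the term $\mu u^p$ contributes only through its $\mu$-derivative, so
\[
L[v,\nu,\kappa]=\left(\Delta v-\kappa\vphi_1-\lambda_1(\Omega)v+\nu\vphi_1^p,\ 2\int_\Omega \vphi_1 v\,dx,\ 2\int_\Omega \nabla\vphi_1\cdot\nabla v\,dx\right).
\]
(The factor $2$ can be absorbed by rescaling, or kept; it does not affect the kernel/range structure.) For (i): the triple $(v,\nu,\kappa)$ is in $\Ker L$ iff $-\Delta v-\lambda_1(\Omega)v=\nu\vphi_1^p-\kappa\vphi_1$, $\int_\Omega\vphi_1 v=0$, $\int_\Omega\nabla\vphi_1\cdot\nabla v=0$. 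The last condition, using $-\Delta\vphi_1=\lambda_1(\Omega)\vphi_1$, reads $\lambda_1(\Omega)\int_\Omega\vphi_1 v=0$, so it is implied by the second; thus the real constraints are the PDE plus orthogonality. Testing the PDE against $\vphi_1$ forces the Fredholm solvability condition $\nu\int_\Omega\vphi_1^{p+1}-\kappa=0$, i.e. $\kappa=\nu\int_\Omega\vphi_1^{p+1}\,dx$. With this relation, the right-hand side is $\vphi_1$-orthogonal, so $v$ is uniquely determined modulo $\vphi_1$ by Fredholm theory for the self-adjoint operator $-\Delta-\lambda_1(\Omega)$ (whose kernel is exactly $\R\vphi_1$ by simplicity of the first eigenvalue); the orthogonality condition $\int_\Omega\vphi_1 v=0$ then pins down $v$. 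Scaling so that $\nu=1$ gives $v=\psi$ as in \eqref{eq:psi} and $\kappa=\int_\Omega\vphi_1^{p+1}\,dx$, proving $\Ker L=\R\phi$ with $\phi=(\psi,1,\int_\Omega\vphi_1^{p+1}\,dx)$.

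For (ii): given $(\xi,h,k)\in L^k(\Omega)\times\R^2$, I solve $L[v,\nu,\kappa]=(\xi,h,k)$, i.e. $-\Delta v-\lambda_1(\Omega)v=\nu\vphi_1^p-\kappa\vphi_1-\xi$, $2\int\vphi_1 v=h$, $2\lambda_1(\Omega)\int\vphi_1 v=k$ (again using the eigenvalue identity to simplify the third equation). Consistency of the last two forces $k=\lambda_1(\Omega)h$, which is precisely $\Psi(\xi,h,k)=0$; conversely, when $k=\lambda_1(\Omega)h$ one chooses $\nu,\kappa$ to make the PDE's right-hand side $\vphi_1$-orthogonal — one free linear equation $\nu\int\vphi_1^{p+1}-\kappa-\int\xi\vphi_1=0$ in two unknowns, solvable — and then $v$ exists in $X$ by elliptic regularity (here $W^{2,k}$ regularity up to the boundary, using $k>N$), with the value of $\int\vphi_1 v$ adjustable by adding multiples of $\vphi_1$ to hit $h/2$. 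This shows $R(L)=\Ker\Psi$, which is closed of codimension one. One should also note $Y=L^k(\Omega)\times\R^2$ splits as $R(L)\oplus\R z$ with, e.g., $z=(0,0,1)$ so that $\Psi(z)=1$.

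For (iii): I compute the pure second derivative $\Phi''$ at the base point applied to $(\phi,\phi)$ with $\phi=(\psi,1,\int_\Omega\vphi_1^{p+1})$. Only the nonlinearity $\mu u^p$ has a nonzero second derivative: the bilinear form in $(v,\nu,\kappa)$ is $2\nu\,p\,\vphi_1^{p-1}v$ in the first component (from the mixed $\mu$–$u$ derivative) plus $\mu\,p(p-1)u^{p-2}v^2$ which vanishes since $\mu=0$ at the base point; the other two components of $\Phi$ are quadratic but $\int u^2$ has constant second derivative $2\int v^2$ and $\int|\nabla u|^2$ has second derivative $2\int|\nabla v|^2$. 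Hence $\Phi''(\cdot)[\phi,\phi]$ has first component $2p\vphi_1^{p-1}\psi$ (taking $\nu=1$), second component $2\int_\Omega\psi^2\,dx$, third component $2\int_\Omega|\nabla\psi|^2\,dx$. Then
\[
\Psi(\Phi''[\phi,\phi])=2\int_\Omega|\nabla\psi|^2\,dx-2\lambda_1(\Omega)\int_\Omega\psi^2\,dx,
\]
which is strictly positive: indeed $\psi\perp\vphi_1$ in $L^2$ and $\psi\not\equiv 0$ (as its defining right-hand side $\vphi_1^p-\vphi_1\int\vphi_1^{p+1}$ is not identically zero, being sign-changing), so the strict Poincaré inequality on the $\vphi_1$-orthogonal complement, $\int|\nabla\psi|^2\geq\lambda_2(\Omega)\int\psi^2>\lambda_1(\Omega)\int\psi^2$, applies. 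This completes (iii).

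The main obstacle I anticipate is not any single computation but the bookkeeping around the choice of function spaces and the regularity of $\Phi$: one must make sure $\Phi$ is genuinely $C^2$ from $X=W^{2,k}\cap\{u|_{\partial\Omega}=0\}$ into $L^k\times\R^2$ (the delicate point being the Nemytskii map $u\mapsto u^p$ when $1<p<2$, deferred to Appendix~\ref{app:B} per Remark~\ref{rem:fisitu}), that $\psi$ actually lands in $X$ (elliptic $W^{2,k}$ regularity up to $\partial\Omega$, plus the sign/boundary conditions implicit in $U$ being open — though for the \emph{linearized} kernel element one only needs $\psi\in X$, not $\psi\in U$), and that the third component of $L$ is correctly simplified via the eigenfunction equation so that it does not contribute an independent constraint. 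Once these are handled, (i)–(iii) follow from the self-adjoint Fredholm alternative for $-\Delta-\lambda_1(\Omega)$ and a single application of the strict Poincaré inequality on $\vphi_1^\perp$.
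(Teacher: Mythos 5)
Your proof is correct and follows essentially the same route as the paper: compute $L$ explicitly, apply the Fredholm alternative for $-\Delta-\lambda_1(\Omega)$ (kernel $\R\varphi_1$, range the $L^2$-orthogonal complement of $\varphi_1$) to obtain (i) and (ii), and evaluate $\Psi(\Phi''[\phi,\phi])=2\int_\Omega\left(|\nabla\psi|^2-\lambda_1(\Omega)\psi^2\right)dx>0$ via the strict Poincar\'e inequality on $\varphi_1^\perp$. Two harmless slips: your displayed formula for $L$ has $-\lambda_1(\Omega)v$ where it should be $+\lambda_1(\Omega)v$ (since $\lambda=-\lambda_1(\Omega)$ at the base point; the kernel equation you then use is nevertheless the correct one), and the bilinear term $-\lambda u$ does have a nonzero second derivative, contributing $-2\kappa v=-2\psi\int_\Omega\varphi_1^{p+1}dx$ to the first component of $\Phi''[\phi,\phi]$ — you dropped it, but since $\Psi$ ignores the first component, (iii) is unaffected.
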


\begin{proof}
(i) We recall that $-\Delta -\l_1(\Omega) Id$ is a Fredholm operator of index $0$, with
\[
\begin{split}
&\Ker(-\Delta-\lambda_1(\Omega) Id)=\mathrm{span} \{\varphi_1\},\\
&R(-\Delta-\l_1(\Omega) Id) =\left\{v\in L^k(\Omega):\ \int_\Omega v\vphi_1\,dx=0\right\}.
\end{split}
\]
Therefore, by the Fredholm alternative, there exists a unique $\psi\in X$ solution of \eqref{eq:psi}.
Let us check that $\Ker(L)= \mathrm{span}\left\{ \left(\psi,\ 1,\ \int_\Omega \vphi_1^{p+1}\,dx\right)\right\}$.
We have
\begin{equation*}
L(v,m,l)=\left(\Delta v+\lambda_1(\Omega) v - l \vphi_1+m \vphi_1^p,\ 2\int_\Omega \vphi_1 v\, dx,\ 2 \int_\Omega \nabla \vphi_1\cdot \nabla v\, dx\right),
\end{equation*}
thus $(v,m,l)\in Ker(L)$ if and only if $l=m\int_\Omega \vphi_1^{p+1}$, $\int_\Omega \vphi_1 v\, dx=\int_\Omega \nabla \vphi_1\cdot \nabla v\, dx=0$, and
\[
-\Delta v-\lambda_1(\Omega) v=m\left(\vphi_1^p-\vphi_1\int_\Omega \vphi_1^{p+1}\,dx\right) \ \text{ for some } m\in \R.
\]
By the uniqueness of $\psi$ in \eqref{eq:psi}, we obtain $v=m \psi$.

(ii) Let us prove that $R(L)=\{(\xi,h,\l_1(\Omega) h):\ \xi\in L^k(\Omega),\ h\in\R\}$. Recalling the expression for $L$ found in (i), it is clear that $L(v,m,l)=(\xi,h,k)$ implies $k=\l_1(\Omega) h$. As for the other inclusion, given any $\xi\in L^k(\Omega)$, let $w\in X$ be the solution of
\[
-\Delta w-\lambda_1(\Omega) w=\vphi_1 \int_\Omega \xi \vphi_1\, dx -\xi, \ \text{ with }
\int_\Omega w \vphi_1\, dx=0,
\]
which exists and is unique again by the Fredholm alternative. Then $L(h\vphi_1/2+w,0,\int_\Omega \xi\vphi_1\,dx)=(\xi,h,\lambda_1(\Omega) h)$.

(iii) We have that
\[
\Phi''(\varphi_1,0,-\lambda_1(\Omega))[\phi,\phi] =2(p\vphi_1^{p-1}\psi-\psi\int_\Omega \vphi_1^{p+1}\,dx,\ \int_\Omega \psi^2\,dx,\ \int_\Omega|\nabla\psi|^2\,dx),
\]
with $\phi$ and $\psi$ defined at point (i). Hence
\begin{equation}\label{eq:ambrosetti_prodi_expansion_application}
\Psi(\Phi''(\varphi_1,0,\lambda_1(\Omega))[\phi,\phi])=\int_\Omega 2(|\nabla\psi|^2-\lambda_1(\Omega)\psi^2)\,dx>0,
\end{equation}
since $\psi$ satisfies \eqref{eq:psi}.
\end{proof}

\begin{proposition}\label{prop:neighborhood of lambda_1}
There exists $\eps^*$ such that the equation
\[
\Phi(u,\mu,\lambda)=(0,0,\lambda_1(\Omega)+\eps),\qquad (u,\mu,\lambda)\in U\times \R^2
\]
has exactly two positive solutions for each $0<\eps<\eps^*$ (one with $\mu>0$ and one with $\mu<0$). Moreover, such solutions satisfy the asymptotic expansion
\[
(u,\mu,\lambda)=(\varphi_1,0,-\lambda_1(\Omega))\pm \sqrt{\frac{\eps}{\int_\Omega \vphi_1^p\psi\,dx}} \left(\psi,1,\int_\Omega \varphi_1^{p+1}\,dx\right)+o(\sqrt{\eps}),
\]
where $\psi$ is defined in \eqref{eq:psi}. In addition, the $L^{p+1}$--norm of one of the solutions is equal to $m_{\lambda_1(\Omega)+\eps}$ and the other is equal to $M_{\lambda_1(\Omega)+\eps}$. 
\end{proposition}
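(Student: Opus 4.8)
The plan is to apply Theorem \ref{thm:AmbrosettiProdi} directly, with $x^\ast=(\vphi_1,0,-\lambda_1(\Omega))$, which the preceding lemma certifies to be an ordinary singular point for $\Phi$. Since $\Phi(x^\ast)=(0,0,\lambda_1(\Omega))$, and $R(L)=\Ker(\Psi)$ with $\Psi(\xi,h,k)=k-\lambda_1(\Omega)h$ satisfies $\Psi(0,0,1)=1$, we may take $z=(0,0,1)$ in the theorem; the perturbed equation $\Phi(x)=\Phi(x^\ast)+\eps z$ then reads exactly $\Phi(u,\mu,\lambda)=(0,0,\lambda_1(\Omega)+\eps)$. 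Theorem \ref{thm:AmbrosettiProdi} thus yields $\eps^\ast,\delta>0$ such that this equation has exactly two solutions in $B_\delta(x^\ast)$ for $0<\eps<\eps^\ast$ (and none for $-\eps^\ast<\eps<0$), together with the $C^1$ branch $x(t)=x^\ast+t\phi+o(\sqrt{\eps})$, $t=\pm\sqrt{2\eps/\Psi(\Phi''(x^\ast)[\phi,\phi])}$. To cast the expansion in the claimed form I would compute $\Psi(\Phi''(x^\ast)[\phi,\phi])$: by \eqref{eq:ambrosetti_prodi_expansion_application} it equals $2\int_\Omega(|\nabla\psi|^2-\lambda_1(\Omega)\psi^2)\,dx$, and testing \eqref{eq:psi} with $\psi$ together with $\int_\Omega\vphi_1\psi\,dx=0$ gives $\int_\Omega(|\nabla\psi|^2-\lambda_1(\Omega)\psi^2)\,dx=\int_\Omega\vphi_1^p\psi\,dx$; hence $t=\pm\sqrt{\eps/\int_\Omega\vphi_1^p\psi\,dx}$, and since $\phi=(\psi,1,\int_\Omega\vphi_1^{p+1}\,dx)$ the asymptotic expansion of the Proposition follows. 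In particular $\int_\Omega\vphi_1^p\psi\,dx>0$, so reading off the $\mu$-component shows that, for $\eps$ small, one of the two solutions has $\mu>0$ and the other $\mu<0$.

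Next I would promote ``exactly two solutions in $B_\delta(x^\ast)$'' to ``exactly two positive solutions'', without any ball restriction. First, every positive $H^1_0$ solution of \eqref{eq:main_problem} is, by elliptic bootstrap (recall $p<2^\ast-1$) and the Hopf lemma, an element of the open set $U$ in the $W^{2,k}$ framework, so the two notions of ``positive solution'' coincide. Then, if for some sequence $\eps_n\to 0^+$ there existed a positive solution $(u_n,\mu_n,\lambda_n)\notin B_\delta(x^\ast)$, then, as $\alpha_n:=\lambda_1(\Omega)+\eps_n\to\lambda_1(\Omega)^+$, Lemma \ref{lemma:alpha_to_lambda_1} would force $(u_n,\mu_n,\lambda_n)\to x^\ast$, a contradiction. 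Hence, after shrinking $\eps^\ast$, every positive solution of the perturbed equation lies in $B_\delta(x^\ast)$, so there are exactly two of them, one with $\mu>0$ and one with $\mu<0$.

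Finally I would identify these two solutions with the extremals of the two-constraint problems. By Lemma \ref{lemma:m_M_attained} together with Proposition \ref{prop:sign_of_mu_lambda} (and the strong maximum principle, to ensure positivity), for $\alpha=\lambda_1(\Omega)+\eps$ there is a positive solution of \eqref{eq:main_problem} with $\mu>0$ whose $L^{p+1}$-norm equals $M_\alpha$, and one with $\mu<0$ whose $L^{p+1}$-norm equals $m_\alpha$. Since, for $\eps<\eps^\ast$, there is exactly one positive solution of each sign of $\mu$, these two extremal solutions must coincide with the two branches found above, which proves the last assertion.

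\textbf{Main obstacle.} The genuinely new input is the verification — already carried out in the lemma preceding the Proposition — that $x^\ast$ is an ordinary singular point; granted that, the Proposition is a fairly mechanical combination of Theorem \ref{thm:AmbrosettiProdi}, Lemma \ref{lemma:alpha_to_lambda_1}, Lemma \ref{lemma:m_M_attained} and Proposition \ref{prop:sign_of_mu_lambda}. The only steps that require care are the global-uniqueness upgrade via the compactness in Lemma \ref{lemma:alpha_to_lambda_1} and the regularity argument reconciling the $H^1_0$ and $W^{2,k}$ settings.
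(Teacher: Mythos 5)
Your proposal is correct and follows essentially the same route as the paper's proof: applying Theorem \ref{thm:AmbrosettiProdi} at the ordinary singular point $(\varphi_1,0,-\lambda_1(\Omega))$ with $z=(0,0,1)$, computing $\Psi(\Phi''(x^\ast)[\phi,\phi])=2\int_\Omega\varphi_1^p\psi\,dx$ to obtain the expansion, upgrading local to global uniqueness among positive solutions via Lemma \ref{lemma:alpha_to_lambda_1}, and identifying the two branches with the extremals of $m_\alpha$ and $M_\alpha$ through Lemma \ref{lemma:m_M_attained} and Proposition \ref{prop:sign_of_mu_lambda}. The extra details you supply (testing \eqref{eq:psi} by $\psi$, and reconciling the $H^1_0$ and $W^{2,k}$ settings) are consistent with what the paper leaves implicit.
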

\begin{proof}
We apply Theorem \ref{thm:AmbrosettiProdi} with $\Phi$ defined in \eqref{eq:Phi_definition}, $x^*=(\vphi_1,0,-\lambda_1(\Omega))$ and $z=(0,0,1)$. By the previous lemma, $x^*$ is ordinary singular for $\Phi$ and moreover, using the notations therein, $\Psi(\Phi''(x^\ast)[\phi,\phi])>0 $ and $\Psi(z)=1$. Therefore the assumptions of Theorem \ref{thm:AmbrosettiProdi} are satisfied and there exists $\eps^\ast,\delta>0$ such that the problem
\[
\Phi(u,\mu,\lambda)=(0,0,\lambda_1(\Omega)+\eps),\qquad (u,\mu,\lambda)\in B_\delta(\varphi_1,0,-\lambda_1(\Omega))
\]
has exactly two solutions for each $0<\eps<\eps^\ast$, which can be parameterized using a map $t \mapsto (u(t), \mu(t), \l(t))$ of class $C^1$ in $U\times\R^2$. The asymptotic expansion is obtained by combining \eqref{eq:ambrosetti_prodi_expansion} with the fact that (cf. \eqref{eq:ambrosetti_prodi_expansion_application})
\[
\Psi(\Phi''(\varphi_1,0,\lambda_1(\Omega))[\phi,\phi])=2\int_\Omega \varphi_1^p\psi\,dx.
\]
Finally, by possibly choosing a smaller $\eps^\ast$, $(u(t), \mu(t), \l(t))$ are the unique positive solutions in $U\times \R^2$ for $0<\eps<\eps^\ast$, as a consequence of Lemma \ref{lemma:alpha_to_lambda_1}, and the statement concerning $\int_\Omega u(t)^{p+1} \,dx$ follows from Lemma \ref{lemma:m_M_attained} and Proposition \ref{prop:sign_of_mu_lambda}.
\end{proof}

\begin{remark}
From Proposition \ref{prop:neighborhood of lambda_1} we deduce an alternative proof of
\cite[Theorem 17 (ii)]{Fukuizumi2012}, namely we can show that
\begin{equation*}
(\mu^2)'(\lambda_1(\Omega)^+)>0.
\end{equation*}
This result is relevant when facing stability issues, see Corollary \ref{coro:mu'>0=>stab} ahead.
\end{remark}


\section{Asymptotics as $\a\to+\infty$}\label{sec:focusing}

In this section we consider the case when $\a$ is large, in order to conclude the proof of Theorem \ref{thm:intro_M}. Since in such case the problems $M_\a$ and $m_\a$ exhibit different asymptotics, here we only address the study of $M_\a$, and we postpone to Appendix \ref{app:C} the complete description of the minimizers corresponding to $m_\a$.

Define, for any $\mu,\lambda\in \R$, the action functional associated to \eqref{eq:mu_lambda}, namely $J_{\mu,\lambda}:H^1_0(\Omega)\to \R$,
\begin{equation}\label{eq:Jmulambda}
J_{\mu,\lambda}(u)=\frac{1}{2}\int_\Omega\left(|\nabla u|^2+\lambda u^2\right)\, dx-\frac{\mu}{p+1}\int_\Omega |u|^{p+1}\, dx.
\end{equation}

\begin{lemma}\label{lemma:M_alpha_minimization}
For every $\mu,\l\in\R$ we have that
\[
u\in \tilde{\mathcal{U}}_\a \text{ and } \int_\Omega |u|^{p+1}\,dx=M_\a \quad \text{ implies }\quad
J_{\mu,\l}(u)=\inf_{\tilde{\mathcal{U}}_\a} J_{\mu,\l}.
\]
\end{lemma}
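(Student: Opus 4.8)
The plan is to reduce everything to the trivial remark that, on $\tilde{\mathcal{U}}_\alpha$, both quadratic terms of $J_{\mu,\lambda}$ are frozen by the constraints. First I would observe that for any $v\in\tilde{\mathcal{U}}_\alpha$ we have $\int_\Omega v^2\,dx=1$ and $\int_\Omega|\nabla v|^2\,dx=\alpha$, so by the definition \eqref{eq:Jmulambda},
\[
J_{\mu,\lambda}(v)=\frac{\alpha+\lambda}{2}-\frac{\mu}{p+1}\int_\Omega|v|^{p+1}\,dx\qquad\text{for every }v\in\tilde{\mathcal{U}}_\alpha .
\]
In other words, restricted to $\tilde{\mathcal{U}}_\alpha$ the functional $J_{\mu,\lambda}$ is an affine function of the single quantity $v\mapsto\int_\Omega|v|^{p+1}\,dx$, strictly decreasing in it when $\mu>0$ and constant when $\mu=0$. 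Hence, for $\mu\ge 0$, minimizing $J_{\mu,\lambda}$ over $\tilde{\mathcal{U}}_\alpha$ is the same as maximizing $\int_\Omega|v|^{p+1}\,dx$ over $\tilde{\mathcal{U}}_\alpha$.

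Next I would check that this maximum equals $M_\alpha$ and is attained. Since $\tilde{\mathcal{U}}_\alpha\subset\mathcal{U}_\alpha$, one has $\sup_{\tilde{\mathcal{U}}_\alpha}\int_\Omega|v|^{p+1}\,dx\le M_\alpha$; conversely, the part of Lemma \ref{lemma:m_M_attained} concerning $M_\alpha$ provides a nonnegative element of $\tilde{\mathcal{U}}_\alpha$ at which the value $M_\alpha$ is achieved, so in fact $\max_{\tilde{\mathcal{U}}_\alpha}\int_\Omega|v|^{p+1}\,dx=M_\alpha$.

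Combining the two steps: if $u\in\tilde{\mathcal{U}}_\alpha$ and $\int_\Omega|u|^{p+1}\,dx=M_\alpha$, then inserting this into the displayed identity gives $J_{\mu,\lambda}(u)=\frac{\alpha+\lambda}{2}-\frac{\mu}{p+1}M_\alpha=\inf_{\tilde{\mathcal{U}}_\alpha}J_{\mu,\lambda}$, as claimed. I do not expect any real obstacle here --- the argument is essentially one line of algebra once the constraints are used --- but two small points deserve attention. One must invoke Lemma \ref{lemma:m_M_attained} to know that $M_\alpha$, defined a priori as a supremum over the larger set $\mathcal{U}_\alpha$, is actually attained on the codimension-$2$ manifold $\tilde{\mathcal{U}}_\alpha$. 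And one should keep track of the sign of $\mu$: the identification of the minimum of $J_{\mu,\lambda}$ with the $L^{p+1}$-maximizer uses $\mu\ge 0$, which is precisely the relevant regime, since by Proposition \ref{prop:sign_of_mu_lambda} the multiplier associated with $M_\alpha$ is positive (for $\mu<0$ the symmetric statement holds with $m_\alpha$ in place of $M_\alpha$, while $\mu=0$ is trivial as $J_{0,\lambda}$ is then constant on $\tilde{\mathcal{U}}_\alpha$).
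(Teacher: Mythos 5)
Your argument is correct and is essentially the paper's own proof: on $\tilde{\mathcal{U}}_\a$ the two quadratic terms are frozen by the constraints, so $J_{\mu,\l}(v)=\frac{\a+\l}{2}-\frac{\mu}{p+1}\int_\Omega|v|^{p+1}\,dx$ there, and minimizing $J_{\mu,\l}$ amounts to maximizing the $L^{p+1}$-norm, whose supremum over $\tilde{\mathcal{U}}_\a$ equals $M_\a$ and is attained by Lemma \ref{lemma:m_M_attained}. Your caveat about the sign of $\mu$ is in fact warranted: although the lemma is stated ``for every $\mu,\l\in\R$'', the identity $\frac{\mu}{p+1}M_\a=\sup_{\tilde{\mathcal{U}}_\a}\frac{\mu}{p+1}\int_\Omega|w|^{p+1}\,dx$ underlying the paper's one-line proof holds only for $\mu\geq 0$, which is precisely the regime in which the lemma is later applied (the multiplier associated with $M_\a$ being positive by Proposition \ref{prop:sign_of_mu_lambda}).
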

\begin{proof}
By definition of $M_\a$ it holds
\begin{multline*}
\frac{\mu}{p+1}M_\a=\sup_{w\in \tilde{\mathcal{U}}_\a}
\left\{ \frac{\mu}{p+1}\int_\Omega |w|^{p+1}\,dx +
\right.\\ \left.
+\frac{1}{2}\left(\a-\int_\Omega|\nabla w|^2\,dx\right) +
\frac{\l}{2}\left(1-\int_\Omega w^2\,dx\right) \right\}
\end{multline*}
and hence
\[
J_{\mu,\lambda}(u)=\frac{\a+\l}{2}-\frac{\mu}{p+1}M_\a=\inf_{w\in \tilde{\mathcal{U}}_\a} J_{\mu,\l}(w). \qedhere
\]
\end{proof}

\begin{lemma}\label{lemma:morse_index_estimate}
Fix $\a>\l_1(\Omega)$ and let $(u,\mu,\l)\in \tilde{\mathcal{U}}_\a\times\R^+\times(-\lambda_1(\Omega),+\infty)$ be any triplet associated to $M_\a$ as in Lemma \ref{lemma:m_M_attained}. Then the Morse index of $J_{\mu,\l}''(u)$ is either 1 or 2.
\end{lemma}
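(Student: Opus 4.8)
The statement claims that if $(u,\mu,\lambda)$ realizes $M_\alpha$ (with $\mu>0$, $\lambda>-\lambda_1(\Omega)$), then the Morse index of $J_{\mu,\lambda}''(u)$ is either $1$ or $2$. The starting point is Lemma \ref{lemma:M_alpha_minimization}: $u$ minimizes $J_{\mu,\lambda}$ on the codimension-$2$ manifold $\tilde{\mathcal{U}}_\alpha$. Hence the restriction of the bilinear form $J_{\mu,\lambda}''(u)$ to the tangent space $T_u\tilde{\mathcal{U}}_\alpha$ is positive semidefinite; equivalently, $J_{\mu,\lambda}''(u)$ has at most two negative directions \emph{transverse} to that tangent space, so its Morse index is at most $2$. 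The lower bound (Morse index $\geq 1$) should come from the fact that $u>0$: testing the quadratic form on $u$ itself, one computes
\[
J_{\mu,\lambda}''(u)[u,u]=\int_\Omega\bigl(|\nabla u|^2+\lambda u^2\bigr)\,dx-p\,\mu\int_\Omega u^{p+1}\,dx=(1-p)\,\mu\int_\Omega u^{p+1}\,dx<0,
\]
using the equation $-\Delta u+\lambda u=\mu u^p$ and $\int_\Omega u^2=1$, $\mu>0$, $p>1$. So the Morse index is at least $1$.

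For the upper bound, I would make the "at most two transverse negative directions" argument precise. The tangent space is $T_u\tilde{\mathcal{U}}_\alpha=\{v\in H^1_0(\Omega):\int_\Omega uv\,dx=\int_\Omega\nabla u\cdot\nabla v\,dx=0\}$, which has codimension $2$ in $H^1_0(\Omega)$. If $V\subset H^1_0(\Omega)$ were a subspace of dimension $3$ on which $J_{\mu,\lambda}''(u)$ is negative definite, then $V\cap T_u\tilde{\mathcal{U}}_\alpha$ would be nontrivial (intersecting a $3$-dimensional space with a codimension-$2$ space inside the ambient space yields something of dimension $\geq 1$), contradicting the positive semidefiniteness of $J_{\mu,\lambda}''(u)|_{T_u\tilde{\mathcal{U}}_\alpha}$ coming from minimality. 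Thus no such $V$ exists and the Morse index is $\leq 2$. The one point that needs a word of care is that "$u$ minimizes $J_{\mu,\lambda}$ on $\tilde{\mathcal{U}}_\alpha$" gives nonnegativity of the second variation \emph{along curves in the manifold}, which by the standard computation (the first-order term vanishes because $u$ is a constrained critical point) translates exactly into $J_{\mu,\lambda}''(u)[v,v]\geq 0$ for all $v\in T_u\tilde{\mathcal{U}}_\alpha$; since $\tilde{\mathcal{U}}_\alpha$ is a genuine $C^1$ (indeed smooth) submanifold by Lemma \ref{lemma:tilde_U_manifold}, this is legitimate.

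The main obstacle, such as it is, is bookkeeping rather than conceptual: one must be careful that the relevant second variation is that of $J_{\mu,\lambda}$ and not of $J$ composed with constraint-enforcing reparametrizations, i.e. that the Lagrange-multiplier structure lets us work with the \emph{free} Hessian $J_{\mu,\lambda}''(u)$ restricted to the tangent space (no correction terms appear precisely because $\mu,\lambda$ are the multipliers and $u$ is a critical point of the constrained problem). Once this is in place, both inequalities are immediate: the upper bound from the linear-algebra intersection argument applied to the semidefiniteness on $T_u\tilde{\mathcal{U}}_\alpha$, and the lower bound from the explicit negativity of $J_{\mu,\lambda}''(u)[u,u]$. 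I would also remark that $u\notin T_u\tilde{\mathcal{U}}_\alpha$ (since $\int_\Omega u^2=1\neq 0$), which is what makes the direction $u$ a genuine transverse negative direction and is consistent with the index being $\geq 1$.
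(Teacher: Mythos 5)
Your proposal is correct and follows essentially the same route as the paper: the lower bound via $J_{\mu,\l}''(u)[u,u]=-(p-1)\mu\int_\Omega u^{p+1}\,dx<0$, and the upper bound by observing that minimality of $J_{\mu,\l}$ on the codimension-$2$ manifold $\tilde{\mathcal{U}}_\a$ (Lemma \ref{lemma:M_alpha_minimization}) makes the free Hessian positive semidefinite on the tangent space, the curvature correction vanishing because $J_{\mu,\l}'(u)\equiv 0$. The linear-algebra intersection step you spell out is left implicit in the paper, but it is exactly the intended argument.
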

\begin{proof}
If $(u,\mu,\l)$ is a triplet associated to $M_\a$, then $\mu>0$ by Proposition \ref{prop:sign_of_mu_lambda}.
Equation \eqref{eq:mu_lambda} implies
\[
J_{\mu,\l}''(u)[u,u]=-(p-1)\m \int_\Omega u^{p+1}\,dx<0,
\]
so that the Morse index is at least 1.
Next we claim that for such $(u,\mu,\l)$ it holds
\[
J_{\mu,\l}''(u)[\phi,\phi]\geq 0, \quad \text{ for every } \phi\in H^1_0(\Omega) \text{ with }
\int_\Omega \nabla u\cdot \nabla\phi \,dx=\int_\Omega u\phi \, dx=0,
\]
which implies that the Morse index is at most 2. Indeed, any such $\phi$ belongs to the tangent space of $\tilde{\mathcal{U}}_\a$ at $u$, hence there exists a $C^\infty$ curve $\gamma(t)$ satisfying, for some $\eps>0$,
\[
\gamma:(-\eps,\eps)\to \tilde{\mathcal{U}}_\a,\quad \gamma(0)=u,\quad \gamma'(0)=\phi.
\]
Lemma \ref{lemma:M_alpha_minimization} implies that $J_{\mu,\l}(\gamma(t))-J_{\mu,\l}(\gamma(0))\geq0$.
Hence
\[
0\leq J_{\mu,\l}(\gamma(t))-J_{\mu,\l}(u)=J_{\mu,\l}'(u)[\phi]t+J_{\mu,\l}''(u)[\phi,\phi]\frac{t^2}{2} +
J_{\mu,\l}'(u)[\gamma''(0)]\frac{t^2}{2}+o(t^2)
\]
Finally, equation \eqref{eq:mu_lambda} implies that $J_{\mu,\l}'(u)\equiv0$, which concludes the proof.
\end{proof}

Next, we use some results from \cite{EspositoPetralla2011} in order to show that a suitable rescaling of the solutions converges to the function $\Kwong$, defined in Remark \ref{rem:kwong}. This will allow us to study the asymptotic behavior of $\mu$ as $\alpha\to +\infty$.
\begin{lemma}\label{rem:blowup}
Take $\alpha_n\to+\infty$ and let $u_n\in H^1_0(\Omega)$, $u_n>0$ satisfy
\[
-\Delta u_n+\l_n u_n =\mu_n u_n^p \quad \text{in }\Omega, \qquad \int_\Omega |\nabla u_n|^2\,dx=\alpha_n, \qquad \int_\Omega u_n^2\,dx=1,
\]
for some $\mu_n>0$ and $\l_n>-\l_1(\Omega)$. Suppose moreover that the Morse index of $J''_{\mu,\lambda}(u_n)$ is uniformly bounded in $n$. Let $x_n$ be a local maximum for $u_n$, and define
\begin{equation}\label{eq:rescailing}
v_n(x)=\left(\frac{\mu_n}{\lambda_n}\right)^{1/(p-1)}u_n\left(\frac{x}{\sqrt{\lambda_n}}+x_n\right),\text{ for } x\in \Omega=\sqrt{\lambda_n}(\Omega-x_n).
\end{equation}
Then
$$
v_n\to \Kwong \quad \text{ in } H^1(\R^N) \qquad \text{ as } n\to +\infty.
$$
\end{lemma}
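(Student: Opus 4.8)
The plan is to carry out a standard blow-up analysis for the rescaled sequence $v_n$ and to identify the limit as the Kwong solution $\Kwong$. First I would record the equation solved by $v_n$: plugging \eqref{eq:rescailing} into the equation for $u_n$, one finds
\[
-\Delta v_n + v_n = v_n^p \qquad \text{in } \Omega_n := \sqrt{\lambda_n}(\Omega - x_n),
\]
so that the nonlinear coefficients are normalized to $1$ precisely because of the factors $(\mu_n/\lambda_n)^{1/(p-1)}$ and $\sqrt{\lambda_n}$ in the rescaling. (Here one uses that $\lambda_n\to+\infty$, which follows from $\alpha_n\to+\infty$ and Lemma \ref{lemma:case_alpha_n_bounded} together with Proposition \ref{prop:sign_of_mu_lambda}: since $\alpha_n$ is unbounded, $\lambda_n$ must be unbounded, and being $\lambda_n>-\lambda_1(\Omega)$ it tends to $+\infty$.) Next I would compute the natural energy quantities of $v_n$: by a change of variables,
\[
\int_{\Omega_n} |\nabla v_n|^2\,dx = \left(\frac{\mu_n}{\lambda_n}\right)^{2/(p-1)} \lambda_n^{1-N/2}\,\alpha_n, \qquad \int_{\Omega_n} v_n^2\,dx = \left(\frac{\mu_n}{\lambda_n}\right)^{2/(p-1)} \lambda_n^{-N/2},
\]
and since $x_n$ is a local maximum of $u_n$ with $-\Delta u_n(x_n)\ge 0$, evaluating the equation at $x_n$ gives $\lambda_n u_n(x_n)\le \mu_n u_n(x_n)^p$, i.e. $\|v_n\|_{L^\infty}\ge v_n(x_n)=(\mu_n/\lambda_n)^{1/(p-1)}u_n(x_n)\ge 1$. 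One also needs an a priori $L^\infty$ bound on $v_n$; this, together with the uniform Morse index bound, is where the results of \cite{EspositoPetralla2011} enter: that paper provides the blow-up profile description and the crucial estimate preventing the $L^\infty$-norm of the rescaled solutions from blowing up (equivalently, it controls the number and the profile of the concentration points). I would invoke those results to conclude that, after the rescaling around the maximum point $x_n$, the sequence $v_n$ is uniformly bounded in $L^\infty$ and in $H^1$, and the domains $\Omega_n$ exhaust $\R^N$ (since $\operatorname{dist}(x_n,\partial\Omega)\sqrt{\lambda_n}\to+\infty$, again a consequence of the Esposito--Petralla analysis, which rules out boundary concentration).

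With these bounds in hand, elliptic regularity ($W^{2,q}$ then $C^{2,\gamma}_{\mathrm{loc}}$ estimates applied to $-\Delta v_n + v_n = v_n^p$ on compact subsets, using that the right-hand side is bounded in every $L^q_{\mathrm{loc}}$) gives, up to a subsequence, $v_n \to v$ in $C^2_{\mathrm{loc}}(\R^N)$ for some $v\ge 0$ solving $-\Delta v + v = v^p$ in $\R^N$. The nontriviality of $v$ follows from $v(0)=\lim v_n(x_n)\ge 1>0$ (after translating $x_n$ to the origin, which is built into the definition of $\Omega_n$). By the strong maximum principle $v>0$, and $v\in H^1(\R^N)$ because of the uniform $H^1$ bound together with uniform decay estimates at infinity for $v_n$ (which again come from the supersolution/barrier arguments of \cite{EspositoPetralla2011}, exploiting the exponential decay associated to the linear part $-\Delta + 1$). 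The uniqueness result of Kwong \cite{Kwong1989}, recalled in Remark \ref{rem:kwong}, then forces $v = \Kwong$ up to translation; since $0$ is a critical point (a local maximum) of the limit, the translation is trivial and $v = \Kwong$. Finally, to upgrade $C^2_{\mathrm{loc}}$ convergence to $H^1(\R^N)$ convergence, I would combine the local convergence with the uniform exponential decay estimates: given $\eta>0$, the tails $\int_{|x|>R} (|\nabla v_n|^2 + v_n^2)\,dx$ and $\int_{|x|>R}(|\nabla \Kwong|^2 + \Kwong^2)\,dx$ are both $<\eta$ for $R$ large uniformly in $n$, while on $\{|x|\le R\}$ the $C^2$ convergence gives convergence of the $H^1$ norm. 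Since the limit $\Kwong$ is independent of the subsequence, the full sequence converges.

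The main obstacle is the uniform $L^\infty$ bound on $v_n$ and the exclusion of boundary blow-up, i.e. $\operatorname{dist}(x_n,\partial\Omega)\sqrt{\lambda_n}\to+\infty$ and $\|v_n\|_{L^\infty}$ bounded; these do not follow from soft arguments and genuinely require the fine blow-up analysis of \cite{EspositoPetralla2011}, which is precisely why the uniform Morse index hypothesis is imposed (it bounds the number of concentration points and, via the associated Pohozaev-type and linearization arguments, forces each bubble to have the $\Kwong$ profile and prevents degenerate behavior such as clustering or boundary concentration). Once those facts are granted, the remaining steps — writing the rescaled equation, elliptic bootstrap, Kwong uniqueness, and the tail estimate for $H^1$ convergence — are routine.
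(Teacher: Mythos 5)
Your proposal is correct and follows essentially the same route as the paper: both delegate the hard uniform estimates (the $L^\infty$ bound, the spike structure under the Morse index hypothesis, exclusion of boundary concentration, and the exponential decay) to the results of \cite{EspositoPetralla2011}, and then combine local convergence to the Kwong profile with uniform tail decay to upgrade to $H^1(\R^N)$ convergence; the paper is merely more economical, quoting \cite[Theorem 3.1]{EspositoPetralla2011} directly for the $C^1_{\mathrm{loc}}$ convergence to $\Kwong$ rather than re-running the elliptic bootstrap and the Kwong uniqueness argument, and \cite[Theorem 3.2]{EspositoPetralla2011} for the decay. One small inaccuracy: Lemma \ref{lemma:case_alpha_n_bounded} deduces boundedness of $\lambda_n,\mu_n$ \emph{from} boundedness of $\alpha_n$, so it cannot yield $\lambda_n\to+\infty$ from $\alpha_n\to+\infty$ as you assert; in the paper this fact is likewise not proved at this stage but is subsumed in the cited blow-up theorems, and is only made quantitative a posteriori in Lemma \ref{lemma:mu_tends_to_zero} via the limit of $\alpha_n/\lambda_n$.
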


\begin{proof}
By possibly working with a subsequence, we can suppose without loss of generality that the Morse index of $J_{\mu,\lambda}''(u_n)$ is equal to $k$. The function $v_n$ solves $-\Delta v_n+v_n=v_n^p$ in $\Omega_n$, and Theorem 3.1 of \cite{EspositoPetralla2011} yields that $v_n\to \Kwong$ in $C^1_\textrm{loc}(\R^N)$. Next, applying \cite[Theorem 3.2]{EspositoPetralla2011} to $U_n:=\mu_n^{1/(p-1)}u_n$, we have the existence of $k$ local maxima $P_n^i$, $i=1,\ldots, k$ so that
\[
\sqrt{\lambda_n}|P_n^i-P_n^j|\to +\infty \qquad \text{ whenever }i\neq j,
\]
and
\[ U_n(x)\leq C \lambda_n^{1/(p-1)}\sum_{i=1}^ke^{-\gamma\sqrt{\lambda_n}|x-P_n^i|},\qquad \forall x\in \Omega.
\]
Thus
\[
v_n(x)\leq C\sum_{i=1}^k e^{-|\sqrt{\lambda_n}(P_n^j-P_n^i)+x|},\qquad \forall x\in \Omega_n
\]
and $v_n$ decays exponentially to 0 as $|x|\to +\infty$, uniformly in $n$. It is now straightforward to conclude.
\end{proof}

\begin{lemma}\label{lemma:mu_tends_to_zero}
In the same assumptions of the previous lemma, we have that
\[\frac{\alpha_n}{\lambda_n}\to \frac{N(p-1)}{N+2-p(N-2)},\qquad \lambda_n\to+\infty,\]
and
\begin{enumerate}
 \item if $1<p<1+\frac{4}{N}$ then $\mu_n\to +\infty$;
 \item if $p=1+\frac{4}{N}$ then $\m_n \to \|\Kwong\|_{L^2(\R^N)}^{p-1}$;
 \item if $1+\frac{4}{N}<p<2^*-1$ then $\mu_n\to 0$.
\end{enumerate}
The result holds in particular in case $(u_n,\mu_n,\l_n)$ is a triplet which achieves $M_{\a_n}$, with $\alpha_n\to+\infty$.
\end{lemma}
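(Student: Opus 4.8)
The strategy is to combine the rescaling convergence of Lemma~\ref{rem:blowup} with the two constraints $\int_\Omega u_n^2\,dx=1$ and $\int_\Omega|\nabla u_n|^2\,dx=\alpha_n$, converting them via a change of variables into relations among $\alpha_n,\lambda_n,\mu_n$ and the universal quantities $\|\nabla\Kwong\|_{L^2(\R^N)}^2$, $\|\Kwong\|_{L^2(\R^N)}^2$. It suffices to argue along subsequences, since the limit values obtained below will not depend on the chosen subsequence. So, pass to a subsequence along which the Morse index of $J_{\mu_n,\lambda_n}''(u_n)$ is constant. Lemma~\ref{rem:blowup} then yields $v_n\to\Kwong$ in $H^1(\R^N)$ (extending $v_n$ by zero outside $\Omega_n=\sqrt{\lambda_n}(\Omega-x_n)$), and the $n$-uniform exponential decay established in its proof upgrades this to $\int_{\Omega_n}v_n^2\,dy\to\|\Kwong\|_{L^2(\R^N)}^2>0$ and $\int_{\Omega_n}|\nabla v_n|^2\,dy\to\|\nabla\Kwong\|_{L^2(\R^N)}^2>0$. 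Using $u_n(x)=(\lambda_n/\mu_n)^{1/(p-1)}v_n(\sqrt{\lambda_n}(x-x_n))$ and substituting $y=\sqrt{\lambda_n}(x-x_n)$ in the two constraints gives
\[
\Big(\tfrac{\lambda_n}{\mu_n}\Big)^{2/(p-1)}\lambda_n^{-N/2}\int_{\Omega_n}v_n^2\,dy=1,
\qquad
\Big(\tfrac{\lambda_n}{\mu_n}\Big)^{2/(p-1)}\lambda_n^{1-N/2}\int_{\Omega_n}|\nabla v_n|^2\,dy=\alpha_n ,
\]
and dividing the second identity by the first, the factors $(\lambda_n/\mu_n)^{2/(p-1)}$ cancel, leaving
\[
\frac{\alpha_n}{\lambda_n}=\frac{\int_{\Omega_n}|\nabla v_n|^2\,dy}{\int_{\Omega_n}v_n^2\,dy}\ \longrightarrow\ \frac{\|\nabla\Kwong\|_{L^2(\R^N)}^2}{\|\Kwong\|_{L^2(\R^N)}^2}.
\]

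To evaluate the last ratio I would use the Nehari and Pohozaev identities for $\Kwong$ (the ground state of $-\Delta Z+Z=Z^p$ in $\R^N$), namely $\|\nabla\Kwong\|_2^2+\|\Kwong\|_2^2=\|\Kwong\|_{p+1}^{p+1}$ and $\tfrac{N-2}{2}\|\nabla\Kwong\|_2^2+\tfrac{N}{2}\|\Kwong\|_2^2=\tfrac{N}{p+1}\|\Kwong\|_{p+1}^{p+1}$; eliminating the $L^{p+1}$-term yields $\|\nabla\Kwong\|_2^2/\|\Kwong\|_2^2=N(p-1)/(N+2-p(N-2))$, which is positive since $p<2^*-1$. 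Hence $\alpha_n/\lambda_n\to N(p-1)/(N+2-p(N-2))$, and since $\alpha_n\to+\infty$ while this ratio tends to a finite positive limit, necessarily $\lambda_n\to+\infty$. For $\mu_n$, solving the first of the two identities above gives
\[
\mu_n^{2/(p-1)}=\lambda_n^{2/(p-1)-N/2}\int_{\Omega_n}v_n^2\,dy\ \sim\ \|\Kwong\|_{L^2(\R^N)}^2\;\lambda_n^{(4-N(p-1))/(2(p-1))}.
\]
Since $\lambda_n\to+\infty$, the exponent $4-N(p-1)$ is positive, zero, or negative according to whether $p<1+4/N$, $p=1+4/N$, or $p>1+4/N$, which gives respectively $\mu_n\to+\infty$; $\mu_n\to\|\Kwong\|_{L^2(\R^N)}^{p-1}$ (using $2/(p-1)=N/2$ in the critical case); and $\mu_n\to0$. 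As the limit values are the same along every subsequence, the whole sequence converges. Finally, a triplet $(u_n,\mu_n,\lambda_n)$ achieving $M_{\alpha_n}$ satisfies $\mu_n>0$ and $\lambda_n>-\lambda_1(\Omega)$ by Proposition~\ref{prop:sign_of_mu_lambda}, has $u_n>0$ (strong maximum principle), and has Morse index of $J_{\mu_n,\lambda_n}''(u_n)$ at most $2$ by Lemma~\ref{lemma:morse_index_estimate}; hence all the above hypotheses hold and the statement applies.

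There is no deep obstacle once Lemma~\ref{rem:blowup} is available: the argument is essentially bookkeeping of scaling exponents together with the Pohozaev computation. The only point requiring genuine care is the passage to the limit inside $\int_{\Omega_n}v_n^2\,dy$ and $\int_{\Omega_n}|\nabla v_n|^2\,dy$ — a step that is legitimate precisely because the proof of Lemma~\ref{rem:blowup} provides $n$-uniform exponential decay of $v_n$, which promotes the $C^1_{\mathrm{loc}}(\R^N)$ convergence to strong convergence in $H^1(\R^N)$.
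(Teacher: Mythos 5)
Your proof is correct and follows essentially the same route as the paper's: rescale via Lemma \ref{rem:blowup}, read off $\mu_n^{2/(p-1)}\lambda_n^{N/2-2/(p-1)}=\int_{\Omega_n}v_n^2\to\|\Kwong\|_{L^2(\R^N)}^2$ and the analogous gradient identity from the two constraints, divide to get $\alpha_n/\lambda_n$, evaluate the limit ratio by Nehari plus Pohozaev, and then sort the three regimes by the sign of the exponent $N/2-2/(p-1)$. Your added checks (subsequence with constant Morse index, and the verification via Proposition \ref{prop:sign_of_mu_lambda} and Lemma \ref{lemma:morse_index_estimate} that maximizers of $M_{\alpha_n}$ satisfy the hypotheses) are consistent with what the paper leaves implicit.
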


\begin{proof}
Let $x_n$ be so that $u_n(x_n)=\|u_n\|_{L^\infty(\Omega)}=:L_n$ and define $v_n$ as in \eqref{eq:rescailing}. Then Lemma \ref{rem:blowup} implies that
\begin{equation}\label{eq:integral_of_v_n^2}
\mu_n^{2/(p-1)}\lambda_n^{N/2-2/(p-1)}=\int_{\Omega_n} v_n^2\, dx\to \int_{\R^N}\Kwong^2\, dx
\end{equation}
and
\[
\frac{\alpha_n}{\lambda_n}\mu_n^{2/(p-1)}\lambda_n^{N/2-2/(p-1)}=\int_{\Omega_n} |\nabla v_n|^2 \, dx\to
\int_{\R^N}|\nabla \Kwong|^2\,dx,
\]
Thus
\begin{equation}\label{eq:limit_of_alpha_lambda}
\frac{\alpha_n}{\lambda_n}\to \frac{\|\nabla \Kwong\|_{L^2(\R^N)}^2}{\|\Kwong\|_{L^2(\R^N)}^2}=\frac{N(p-1)}{N+2-p(N-2)}.
\end{equation}
where the last relation follows by combining the Pohozaev identity with the equality
$\|\nabla \Kwong\|_{L^2(\R^N)}^2+\|\Kwong\|_{L^2(\R^N)}^2=\|\Kwong\|_{L^{p+1}(\R^N)}^{p+1}$.

From \eqref{eq:limit_of_alpha_lambda} we have that, as $\alpha\to +\infty$, $\lambda\to +\infty$. Combining this information with the fact that the exponent $N/2-2/(p-1)$ is negative, zero, or positive respectively in the subcritical, critical, and supercritical case, the properties for $\mu_n$ follow from \eqref{eq:integral_of_v_n^2}.
\end{proof}


\begin{proof}[End of the proof of Theorem \ref{thm:intro_M}]
The fact that $M_\alpha$ is achieved by a triplet $(u,\mu,\lambda)$, with $\mu>0$ and $\lambda>-\lambda_1(\Omega)$ is a consequence of Lemma \ref{lemma:m_M_attained} and Proposition \ref{prop:sign_of_mu_lambda}. Lemma \ref{lemma:alpha_to_lambda_1} implies the asymptotic behavior as $\alpha\to \lambda_1(\Omega)^+$, while the results as $\alpha\to +\infty$ follow from Lemmas \ref{rem:blowup} and \ref{lemma:mu_tends_to_zero}. In the latter case, also $\|u\|_\infty\to+\infty$; moreover, if a solution $u$ has Morse index $k$ (with $k$ being either 1 or 2), then \cite[Theorem 3.2]{EspositoPetralla2011} yields that $u$ has $k$ local maxima $P^i$, $i=1,k$, and
\[
u(x)\leq C \left(\frac{\lambda}{\mu}\right)^{1/(p-1)}\sum_{i=1}^ke^{-\gamma\sqrt{\lambda}|x-P^i|},\qquad \forall x\in \Omega.
\]
This shows that $u$ can have at most $2$ spikes .
\end{proof}

\section{Least energy solutions in the ball}\label{section:FocusingRadial}
From now on we will focus on the case
\[
\Omega:=B_1.
\]
To start with, we collect in the following theorem some well known results about uniqueness and nondegeneracy of positive solutions of equation \eqref{eq:stationary_intro} on the ball.
\begin{theorem}[{\cite{GidasNiNirenberg1979,Kwong1989,KwongLi1992,Korman2002,AftalionPacella2003}}]\label{thm:uniqueness_nondegeneracy}
Let $\lambda\in (-\lambda_1(B_1),+\infty)$ and $\mu>0$ be fixed. Then the problem
\[
-\Delta u+\lambda u=\mu u^p \text{ in } B_1,\qquad u=0 \text{ on } \partial B_1,
\]
admits a unique positive solution $u$, which is nondegenerate, radially symmetric, and decreasing with respect to the radial variable $r=|x|$.
\end{theorem}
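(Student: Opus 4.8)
The statement collects facts that are by now classical, so the plan is not to prove anything new but to reduce each assertion to the quoted references and indicate which reference supplies which piece.

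First I would remove the parameter $\mu$: if $v>0$ solves $-\Delta v+\lambda v=\mu v^p$ on $B_1$, then $w:=\mu^{1/(p-1)}v$ solves $-\Delta w+\lambda w=w^p$ on $B_1$, and this substitution preserves positivity, uniqueness, radial symmetry and monotonicity, as well as the (non)degeneracy of the linearized operator; hence it suffices to treat $\mu=1$. By elliptic regularity any positive weak solution in $H^1_0(B_1)$ is classical up to the boundary, so the moving plane method of \cite{GidasNiNirenberg1979} applies --- the nonlinearity $s\mapsto s^p-\lambda s$ is locally Lipschitz on $[0,\infty)$ and the domain is a ball --- and gives that $w$ is radial, $w=w(r)$, with $w'(r)<0$ for $0<r<1$. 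The problem is thus reduced to the singular two-point boundary value problem
\[
-w''-\frac{N-1}{r}\,w'+\lambda w=w^p \quad \text{on }(0,1), \qquad w'(0)=0,\ w(1)=0,\ w>0.
\]

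Uniqueness for this ODE is where the cited literature does the substantive work. It is convenient, purely for bookkeeping, to split according to the sign of $\lambda$: a rescaling in space and amplitude brings the equation (still denoted $w$) to $-\Delta w+w=w^p$ on $B_{\sqrt{\lambda}}$ when $\lambda>0$, to $-\Delta w=w^p$ on $B_1$ when $\lambda=0$, and to $-\Delta w-w=w^p$ on $B_{\sqrt{-\lambda}}$ when $-\lambda_1(B_1)<\lambda<0$; in the last case the constraint $\sqrt{-\lambda}<\sqrt{\lambda_1(B_1)}$ is exactly the solvability condition $\lambda>-\lambda_1(B_1)$ read through $\lambda_1(B_R)=\lambda_1(B_1)/R^2$. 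Uniqueness of the positive radial solution in each of these situations is contained in \cite{Kwong1989,KwongLi1992,Korman2002}, and I would quote the precise statement covering the interval of the parameter at hand (alternatively, one may cite a single reference handling $-\Delta w+\lambda w=w^p$ on a fixed ball for all $\lambda\in(-\lambda_1(B_1),+\infty)$).

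It remains to prove nondegeneracy, i.e. that $L\phi:=-\Delta\phi+\lambda\phi-pw^{p-1}\phi=0$ with $\phi\in H^1_0(B_1)$ forces $\phi\equiv 0$. Decomposing $\phi$ into spherical harmonics, the components of order $k\ge 1$ are handled by an elementary argument: for $k\ge 2$ the corresponding radial operator is a strictly positive perturbation of the one for $k=1$, hence positive definite; and for $k=1$ the radial ODE is solved by $w'$, which is bounded at $r=0$ and, by the Hopf lemma, satisfies $w'(1)<0$, while the second solution of that second-order ODE is singular at the origin, so no nontrivial admissible component survives. The radial ($k=0$) component is the genuine difficulty, and it is precisely the nondegeneracy established in \cite{KwongLi1992,AftalionPacella2003}. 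I expect this radial nondegeneracy --- together with the purely bibliographical check that the quoted uniqueness results really cover the whole range $(-\lambda_1(B_1),+\infty)$ without gaps, in particular near the endpoint $-\lambda_1(B_1)$ --- to be the only delicate point, and since it is already settled in the references no new estimate is needed.
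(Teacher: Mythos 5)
Your reduction to $\mu=1$, the appeal to \cite{GidasNiNirenberg1979} for symmetry and monotonicity, and the case-by-case attribution of uniqueness to \cite{Kwong1989} ($\lambda>0$), \cite{KwongLi1992,Zhang1992} and \cite{Korman2002} ($\lambda<0$, $N\geq3$ resp.\ $N=2$), with \cite{GidasNiNirenberg1979} covering $\lambda=0$, all match what the paper does. The spherical-harmonics elimination of the modes $k\geq1$ is also a legitimate (standard) way to reduce nondegeneracy to the radial linearization.

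The gap is exactly where you locate the ``only delicate point'' and then wave it away: the radial nondegeneracy for $\lambda\in(-\lambda_1(B_1),0]$ is \emph{not} settled by the references you cite. The result of \cite{AftalionPacella2003} that the paper invokes gives nondegeneracy from the Morse-index-one property only for $\lambda>0$, and \cite{KwongLi1992} is a uniqueness theorem, not a nondegeneracy one; the authors state explicitly that they could not find a precise reference for $\lambda\leq0$ and therefore supply their own argument. That argument is nontrivial: assuming a degenerate direction $w$, they note that $J_{\mu,\lambda}''(u)\leq0$ on the two-dimensional space $\mathrm{span}\{u,w\}$, then perturb the functional to
\[
I_\delta(v)=\int_{B_1}\Bigl(\tfrac{1}{2}|\nabla v|^2+\tfrac{\lambda+\delta u^{p-1}}{2}v^2-\tfrac{\mu+\delta}{p+1}(v^+)^{p+1}\Bigr)\,dx,
\]
for which $I_\delta''(u)$ is \emph{strictly} negative on that plane (Morse index $\geq2$), while $I_\delta$ still has mountain-pass geometry and its perturbed equation $-\Delta v=V_\delta(r)v+(\mu+\delta)v^p$ has $u$ as its \emph{unique} positive radial solution (by \cite{Korman2002} for $\lambda<0$, after checking the hypothesis $\frac{d}{dr}[r^{2n(1/2-1/(p+1))}V_\delta(r)]\geq0$, and by the argument of \cite{FelmerMartinezTanaka2008} for $\lambda=0$); this forces $u$ to be of mountain-pass type, hence of index one, a contradiction. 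Your proposal, as written, would need to either reproduce an argument of this kind or locate a genuine reference for the radial nondegeneracy on the ball with $-\lambda_1(B_1)<\lambda\leq0$; ``the bibliographical check'' you defer to is precisely the step that fails.
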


\begin{proof}
The existence easily follows from the mountain pass lemma.
The radial symmetry and monotonicity of positive solutions is a direct consequence of \cite{GidasNiNirenberg1979}.

The uniqueness in the case $\lambda>0$ was proved by M. K. Kwong in \cite{Kwong1989} for $N\geq 2$. For $\l\in (-\lambda_1(B_1),0)$, the uniqueness in dimension $N\geq3$ was proved by M. K. Kwong and Y. Li \cite[Theorem 2]{KwongLi1992} (see also \cite{Zhang1992}), whereas in dimension $N=2$ it was proved by P. Korman \cite[Theorem 2.2]{Korman2002}. The case $\lambda=0$ is treated in Section 2.8 of \cite{GidasNiNirenberg1979}.

As for the nondegeneracy, for $\lambda>0$ this follows from \cite[Theorem 1.1]{AftalionPacella2003},
since we know that $u$ has Morse index one, as it is a mountain pass solution for $J_{\mu,\lambda}$
(recall that such functional is defined as in \eqref{eq:Jmulambda}).
As for $\lambda\in (-\lambda_1(B_1),0]$, we could not find a precise reference and for this reason
we present here a proof, following some ideas of \cite{KabeyaTanaka1999}.

Assume by contradiction that $u$ is a degenerate solution for some $\lambda\in (-\lambda_1(B_1),0]$. This means that there exists $0\neq w\in H^1_0(B_1)$ solution of
\[
-\Delta w+\lambda w=pu^{p-1}w,
\]
hence $w\in H^1_{0,\text{rad}}(B_1)$ and $J_{\mu,\lambda}''(u)[w,\xi]=0$ for all $\xi\in H^1_0(B_1)$. Moreover, we have that $J_{\mu,\l}''(u)[u,u]=-(p-1)\m \int_{B_{1}} u^{p+1}\,dx<0,$
and thus
\[
J_{\mu,\lambda}''(u)[h,h]\leq 0,\qquad \forall h\in H:=\mathrm{span}\{u,w\}.
\]
For $\delta>0$, consider the perturbed functional
\begin{equation}\label{eq:J_delta_nondegeneracy}
I_\delta(w)=\int_{B_1} \left( \frac{|\nabla w|^2}{2}+\frac{\lambda+\delta u^{p-1}}{2} w^2 -\frac{\mu+\delta}{p+1} (w^+)^{p+1}  \right) \,dx.
\end{equation}
On the one hand, this functional satisfies, for every $h\in H\setminus\{0\}$,
\begin{eqnarray}\label{eq:Morseindex_contradiction}
I_\delta''(u)[h,h]&=& J_{\mu,\lambda}''(u)[h,h] + \int_{B_{1}} (\delta u^{p-1}h^2-p\delta u^{p-1}h^2) \,dx \nonumber \\
& \leq& -(p-1)\delta \int_{B_{1}} u^{p-1}h^2\,dx < 0,
\end{eqnarray}
On the other hand, $I_\delta$ has a mountain pass geometry for $\delta$ sufficiently small, hence it has a critical point of mountain pass type. Every non-zero critical point of $I_\delta$ is positive (by the maximum principle) and it solves
\[
\left\{\begin{array}{ll}
-\Delta w=V_\delta(r)w +(\mu+\delta) w^p \quad & \text{ in }{B_{1}}\\
w>0 & \text{ in }{B_{1}}\\
w\in H^1_0({B_{1}}),
\end{array}\right.
\]
for $V_\delta(r):=-\lambda-\delta u^{p-1}$. Now this problem has a unique radial solution, which is
$u$ itself, which is in contradiction with \eqref{eq:Morseindex_contradiction}. The uniqueness of
this perturbed problem follows from \cite[Theorem 2.2]{Korman2002} in case $\lambda<0$ (in fact,
$V_\delta(r)>0$  and $\frac{d}{dr}[r^{2n(\frac{1}{2}-\frac{1}{p+1})}V_\delta(r)]\geq 0$), while in
case $\lambda=0$ we can reason exactly as in \cite[Proposition 3.1]{FelmerMartinezTanaka2008} (the
proof there is for the annulus, but the argument also works in case of a ball).
\end{proof}

\begin{remark}\label{rem:morse_index_in_B}
As we already mentioned, the Morse index of $u>0$ as a critical point of $J_{\mu,\l}$ is $1$.
Recalling the definition of $I_\delta$ in
\eqref{eq:J_delta_nondegeneracy}, we have that also the Morse index of $I''_\delta(u)$ is $1$,
at least if $\lambda>-\lambda_1(B_1)$ and if $\delta>0$ is small enough. When $\lambda<0$ this was shown in the proof of
the previous result, where we have dealt also with the case $\lambda=0$. The proof for $\lambda>0$
is the same as in the latter case.
\end{remark}
Given $k>N$, as before let us take $X=\{w\in W^{2,k}(B_1):\ w=0 \text{ on } \partial B_1\}$.
Let us introduce the map $F:X\times \R^3\to L^k(B_1)\times \R^2$ defined by
$$
F(u,\mu,\lambda,\alpha)=\left(\Delta u-\lambda u+\mu u^p,\int_{B_1} u^2\, dx-1,
\int_{B_1} |\nabla u|^2\,dx-\alpha\right),
$$
and its null set restricted to positive $u$
\[
\mathcal{S}=\left\{(u,\mu,\lambda,\alpha)\in X\times \R^3: u>0,\ F(u,\mu,\lambda,\alpha)=(0,0,0)\right\}.
\]
It is immediate to check that $\mathcal{S}\cap\{\alpha\leq\lambda_{1}(B_1)\}=
\{(\varphi_1,0,-\lambda_{1}(B_1),\lambda_{1}(B_1)\}$, so that
\[
\mathcal{S}^\pm:=\mathcal{S}\cap\{\pm\mu>0\}\subset\{\alpha>\lambda_{1}(B_1)\}.
\]
We are going to show that $\mathcal{S}^+$ can be parameterized in a smooth way on $\alpha$,
thus proving the part of Theorem \ref{thm:main_final} regarding focusing nonlinearities. As we
mentioned, the (easier) study of $\mathcal{S}^-$ is postponed to Appendix \ref{app:C}. In
view of the application of the Implicit Function Theorem, we have the following.
\begin{lemma}\label{lem:nondegeneracy_in_the_ball}
Let $(u,\mu,\lambda,\alpha) \in \mathcal{S}^+$. Then the linear bounded operator
\[
F_{(u,\mu,\lambda)}(u,\mu,\lambda,\alpha):X\times\R^2 \to L^k(B_1)\times\R^2
\]
is invertible.
\end{lemma}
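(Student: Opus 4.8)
The goal is to show that the partial differential $F_{(u,\mu,\lambda)}$ at a point of $\mathcal{S}^+$ is an isomorphism between $X\times\R^2$ and $L^k(B_1)\times\R^2$. Since the two spaces have ``the same size'' — the second $\R^2$ accounting for the two scalar unknowns $\mu,\lambda$ — the natural strategy is to prove injectivity and then invoke a Fredholm-index argument to upgrade to bijectivity. First I would compute $F_{(u,\mu,\lambda)}(u,\mu,\lambda,\alpha)[v,m,\ell]$ explicitly. Writing $L_0 := -\Delta -\mu p u^{p-1} + \lambda$ for the linearized Dirichlet operator at $u$, one gets
\[
F_{(u,\mu,\lambda)}(u,\mu,\lambda,\alpha)[v,m,\ell]
=\left(-L_0 v - \ell u + m u^p,\ 2\int_{B_1} u v\, dx,\ 2\int_{B_1}\nabla u\cdot\nabla v\, dx\right).
\]
The operator $v\mapsto -L_0 v$ (as a map $X\to L^k(B_1)$) is Fredholm of index $0$, since $-\Delta$ with Dirichlet conditions is an isomorphism $X\to L^k(B_1)$ and $\mu p u^{p-1} - \lambda$ acts as a bounded (indeed compact, via Sobolev embedding $X\hookrightarrow C(\overline{B_1})$) multiplication perturbation. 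Adding the two finite-rank pieces $(m,\ell)\mapsto (-\ell u + m u^p, 2\int u v, 2\int\nabla u\cdot\nabla v)$ does not change the index, so $F_{(u,\mu,\lambda)}$ is Fredholm of index $0$ from $X\times\R^2$ to $L^k(B_1)\times\R^2$. Hence it suffices to prove injectivity.

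For injectivity, suppose $F_{(u,\mu,\lambda)}(u,\mu,\lambda,\alpha)[v,m,\ell] = (0,0,0)$, i.e.
\[
L_0 v = m u^p - \ell u,\qquad \int_{B_1} u v\, dx = 0,\qquad \int_{B_1}\nabla u\cdot\nabla v\, dx = 0.
\]
The key input is Theorem~\ref{thm:uniqueness_nondegeneracy}: $u$ is \emph{nondegenerate}, so $\Ker L_0 = \{0\}$ and $L_0$ is invertible on $X$. Thus $v = m\, L_0^{-1}(u^p) - \ell\, L_0^{-1}(u)$. Now I need two identities that pin down $m$ and $\ell$. Multiplying the equation $L_0 v = m u^p - \ell u$ by $u$ and integrating by parts, and using the equation $-\Delta u + \lambda u = \mu u^p$ satisfied by $u$, one obtains a relation of the form $(\text{something})\cdot\langle\text{stuff}\rangle = m\int u^{p+1} - \ell\int u^2$ while the two orthogonality constraints $\int u v = \int\nabla u\cdot\nabla v = 0$ translate (again using $u$'s equation) into $\int u v = 0$ and, since $\int\nabla u\cdot\nabla v = \mu\int u^p v - \lambda\int u v = \mu\int u^p v$, into $\int u^p v = 0$. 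The plan is to test $L_0 v = m u^p - \ell u$ against $L_0^{-1}(u)$ and against $L_0^{-1}(u^p)$ (or equivalently against $v$ itself cleverly) to extract a $2\times 2$ linear system in $(m,\ell)$ whose matrix is
\[
\begin{pmatrix} \int_{B_1} u^p\, L_0^{-1}(u^p)\,dx & -\int_{B_1} u\, L_0^{-1}(u^p)\,dx\\[2pt] \int_{B_1} u^p\, L_0^{-1}(u)\,dx & -\int_{B_1} u\, L_0^{-1}(u)\,dx\end{pmatrix},
\]
and then show this matrix is nonsingular; once $m = \ell = 0$, it follows that $v = 0$.

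The main obstacle is precisely showing that this $2\times 2$ system is nondegenerate — equivalently, that the constraint directions $\int u\,\cdot$ and $\int\nabla u\cdot\nabla\,\cdot$ are not ``aligned'' with the range directions $u$ and $u^p$ through $L_0^{-1}$. Here I expect to exploit the \emph{variational characterization}: by Lemma~\ref{lemma:morse_index_estimate} the Morse index of $J_{\mu,\lambda}''(u)$ restricted to the space $T := \{\phi : \int u\phi = \int\nabla u\cdot\nabla\phi = 0\}$ is zero, i.e. $J_{\mu,\lambda}''(u)$ is positive semidefinite on $T$; combined with nondegeneracy ($\Ker J_{\mu,\lambda}''(u) = \Ker L_0 = 0$ since $u$ is radial and nondegenerate), $J_{\mu,\lambda}''(u)$ is in fact \emph{positive definite} on $T$. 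But the quadratic form associated to $L_0$ \emph{is} $J_{\mu,\lambda}''(u)$. If $v\in T$ solves $L_0 v = m u^p - \ell u$ with $\int u^p v = \int u v = 0$, then pairing with $v$ gives $J_{\mu,\lambda}''(u)[v,v] = \langle L_0 v, v\rangle = m\int u^p v - \ell\int u v = 0$, and positive definiteness on $T$ forces $v = 0$; feeding $v=0$ back into $L_0 v = m u^p - \ell u$ and using that $u^p, u$ are linearly independent yields $m = \ell = 0$. This is the cleanest route, and it avoids ever computing the determinant above explicitly — the work is entirely in justifying ``$J_{\mu,\lambda}''(u)$ positive definite on $T$'', which is exactly Lemma~\ref{lemma:morse_index_estimate} plus nondegeneracy. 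I would present this argument, noting that Morse index $1$ (from Remark~\ref{rem:morse_index_in_B}) together with the direction $u$ giving $J_{\mu,\lambda}''(u)[u,u]<0$ already exhausts the single negative direction, so nothing negative survives on $T$.
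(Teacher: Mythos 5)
Your reduction to injectivity via a Fredholm index-zero argument matches the paper's (which invokes the Fredholm alternative and the closed graph theorem), and the identities you extract for a kernel element $(v,m,\ell)$ --- namely $\int_{B_1}u^pv\,dx=0$, $J''_{\mu,\lambda}(u)[u,v]=0$ and $J''_{\mu,\lambda}(u)[v,v]=0$ --- are exactly the ones the paper uses. However, the way you close the argument has two genuine problems. First, you invoke Lemma \ref{lemma:morse_index_estimate} to get semidefiniteness of $J''_{\mu,\lambda}(u)$ on the constraint tangent space $T$; that lemma applies only to triplets that \emph{achieve} $M_\alpha$, whereas at this stage $(u,\mu,\lambda,\alpha)$ is an arbitrary point of $\mathcal{S}^+$ --- the fact that every such point is the maximizer is exactly what Proposition \ref{prop:focusing_radial_regular_curve} later deduces \emph{from} the present lemma, so the reference is circular. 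The admissible input is Remark \ref{rem:morse_index_in_B} (Morse index one of any positive solution in the ball), which you mention only in your closing sentence. Second, ``positive semidefinite on $T$ plus trivial kernel of the linearized operator implies positive definite on $T$'' is not a valid step: from $J''_{\mu,\lambda}(u)[v,v]=0$ and semidefiniteness on $T$ you only get $J''_{\mu,\lambda}(u)[v,z]=0$ for $z\in T$, and since $T$ has codimension $2$ (while you also control only the one extra direction $u$) this does not place $v$ in the kernel of the full quadratic form. The repair is to run the argument on the codimension-one space $\{w:\ J''_{\mu,\lambda}(u)[u,w]=0\}$: Morse index one together with $J''_{\mu,\lambda}(u)[u,u]<0$ gives semidefiniteness there; $v$ belongs to it precisely because $\int_{B_1} u^pv\,dx=0$; then $J''_{\mu,\lambda}(u)[v,v]=0$ and Cauchy--Schwarz force $J''_{\mu,\lambda}(u)[v,\cdot]\equiv0$, hence $v=0$ by the nondegeneracy in Theorem \ref{thm:uniqueness_nondegeneracy}, and finally $m=\ell=0$ since $u$ and $u^p$ are independent.

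It is worth comparing with how the paper actually concludes, since it sidesteps the semidefinite-versus-definite issue entirely: from $J''_{\mu,\lambda}(u)\leq0$ on the two-dimensional space $\mathrm{span}\{u,v\}$ it passes to the perturbed functional $I_\delta$ of \eqref{eq:J_delta_nondegeneracy}, whose second derivative is \emph{strictly} negative on that space, contradicting the Morse index one of $I''_\delta(u)$ established in Remark \ref{rem:morse_index_in_B}. Both routes ultimately rest on the ball-specific uniqueness and nondegeneracy theory, but the paper's version never needs to discuss the kernel of the quadratic form.
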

\begin{proof}
The lemma is a direct consequence of the Fredholm Alternative and of the Closed Graph Theorems,
once we show that the operator above is injective. Let us suppose by contradiction the
existence of $(v,m,l)\neq(0,0,0)$ such that
$F_{(u,\mu,\lambda)}(u,\mu,\lambda,\alpha)[v,m,l]=(0,0,0)$. This explicitly writes
\begin{equation}\label{eq:cond_nondeg}
\begin{array}{lll}
-\Delta u+\lambda u= \mu u^p, & \int_{B_1} u^2\,dx=1, & \int_{B_1} |\nabla u|^2\, dx=\alpha, \\
-\Delta v +\lambda v+l u =p\mu u^{p-1} v +m u^p, & \int_{B_1} uv\, dx=0, &
\int_{B_1} \nabla u \cdot \nabla v\, dx=0.
\end{array}
\end{equation}
By testing the two differential equations by $v$ we obtain
\begin{equation}\label{eq:cond_nondeg2}
\int_{B_1} u^p v\,dx=0,\qquad
\int_{B_1} |\nabla v|^2\,dx + \lambda \int_{B_1} v^2 \,dx =p\mu \int_{B_1} u^{p-1} v^2 \,dx,
\end{equation}
so that
\[
J_{\mu,\lambda}''(u)[u,u]<0, \quad J_{\mu,\lambda}''(u)[u,v]=0, \quad J_{\mu,\lambda}''(u)[v,v]=0.
\]
This implies that $J_{\mu,\lambda}''(u)[h,h]\leq 0$ for every $h\in H=\spann\{u,v\}$.
By defining $I_\delta$ as in \eqref{eq:J_delta_nondegeneracy}, for $\delta>0$ small, we obtain $I_{\delta}''(u)[h,h]< 0$
for every $0\neq h\in H$. Since $H$ has dimension 2 ($v=cu$ would imply
$c\int_\Omega u^2 =0$), this contradicts Remark
\ref{rem:morse_index_in_B}.
\end{proof}

\begin{proposition}\label{prop:focusing_radial_regular_curve}
$\mathcal{S}^+$ is a smooth curve, parameterized by a map
\[
\alpha\mapsto (u(\alpha),\mu(\a),\l(\a)), \qquad \alpha\in (\lambda_1({B_{1}}),+\infty).
\]
In particular, $u(\alpha)$ is the unique maximizer of $M_\alpha$ (as defined in
\eqref{eq:M_intro}).
\end{proposition}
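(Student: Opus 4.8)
The plan is to apply the Implicit Function Theorem along $\mathcal{S}^+$, using the nondegeneracy in Lemma \ref{lem:nondegeneracy_in_the_ball}, and then to globalize via a properness argument based on Lemma \ref{lemma:case_alpha_n_bounded} together with the local uniqueness near $\alpha=\lambda_1(B_1)$ from Proposition \ref{prop:neighborhood of lambda_1}. First I would observe that $\mathcal{S}^+$ is relatively open in the zero set $\{F=(0,0,0)\}$: indeed every positive solution of \eqref{eq:problema_mu_lambda} satisfies $\partial_\nu u<0$ on $\partial B_1$ by Hopf's lemma (the equation has bounded zeroth order coefficient), so $\mathcal{S}^+=\{F=(0,0,0)\}\cap\bigl(U\times(0,+\infty)\times\R^2\bigr)$ with $U\times(0,+\infty)\times\R^2$ open. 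Since by Lemma \ref{lem:nondegeneracy_in_the_ball} the partial differential $F_{(u,\mu,\lambda)}$ is an isomorphism at every point of $\mathcal{S}^+$, and $F\in C^2$ by Remark \ref{rem:fisitu}, the Implicit Function Theorem shows that near each of its points $\mathcal{S}^+$ is the graph of a $C^1$ map $\alpha\mapsto(u(\alpha),\mu(\alpha),\lambda(\alpha))$. Hence $\mathcal{S}^+$ is a smooth one–dimensional submanifold and the projection $\pi(u,\mu,\lambda,\alpha)=\alpha$ is a local diffeomorphism of $\mathcal{S}^+$ onto $(\lambda_1(B_1),+\infty)$ (recall $\mathcal{S}^+\subset\{\alpha>\lambda_1(B_1)\}$).

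Next I would check that $\pi$ is surjective and proper. For surjectivity, given $\alpha>\lambda_1(B_1)$, Lemma \ref{lemma:m_M_attained} and Proposition \ref{prop:sign_of_mu_lambda} provide a nonnegative maximizer $u$ of $M_\alpha$ solving \eqref{eq:problema_mu_lambda} with $\mu>0$, $\lambda>-\lambda_1(B_1)$; elliptic regularity gives $u\in X$, and the strong maximum principle together with Hopf's lemma give $u>0$ in $B_1$, $\partial_\nu u<0$, so $(u,\mu,\lambda,\alpha)\in\mathcal{S}^+$. For properness, take $(u_n,\mu_n,\lambda_n,\alpha_n)\in\mathcal{S}^+$ with $\alpha_n$ in a compact subinterval of $(\lambda_1(B_1),+\infty)$: by Lemma \ref{lemma:case_alpha_n_bounded} the multipliers are bounded, $u_n$ is bounded in $H^1_0(B_1)$, and a standard bootstrap yields a bound in $C^{2,\gamma}(\overline{B_1})$; hence, up to a subsequence, $u_n\to u_\infty$ in $X$, $\mu_n\to\mu_\infty\geq0$, $\lambda_n\to\lambda_\infty$, with $-\Delta u_\infty+\lambda_\infty u_\infty=\mu_\infty u_\infty^p$, $u_\infty\geq0$, $\int_{B_1}u_\infty^2\,dx=1$. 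If $\mu_\infty=0$, then $u_\infty$ would be a nonnegative eigenfunction of $-\Delta$, i.e. $u_\infty=\varphi_1$ and $\lambda_\infty=-\lambda_1(B_1)$, which forces $\alpha_n\to\lambda_1(B_1)$ against the choice of the subinterval; hence $\mu_\infty>0$, and multiplying by $\varphi_1$ gives $\lambda_\infty>-\lambda_1(B_1)$, so $(u_\infty,\mu_\infty,\lambda_\infty,\alpha_\infty)\in\mathcal{S}^+$.

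A proper local homeomorphism onto the connected, simply connected interval $(\lambda_1(B_1),+\infty)$ is a trivial covering; therefore $\mathcal{S}^+$ is a disjoint union of copies of this interval, each mapped diffeomorphically onto it by $\pi$, and the number of copies — locally constant on a connected base — is constant. By Proposition \ref{prop:neighborhood of lambda_1}, for $\alpha$ close to $\lambda_1(B_1)$ there is exactly one positive solution of \eqref{eq:problema_mu_lambda} with $\mu>0$, i.e. $\pi^{-1}(\alpha)$ is a single point there; hence $\mathcal{S}^+$ has exactly one component, $\pi$ is a diffeomorphism, and its inverse $\alpha\mapsto(u(\alpha),\mu(\alpha),\lambda(\alpha))$, $\alpha\in(\lambda_1(B_1),+\infty)$, is the desired parameterization. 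For the last assertion, any maximizer $w$ of $M_\alpha$ yields the nonnegative maximizer $|w|$, which by the surjectivity step lies in $\pi^{-1}(\alpha)=\{(u(\alpha),\mu(\alpha),\lambda(\alpha),\alpha)\}$; thus $|w|=u(\alpha)>0$ in $B_1$, so $w$ has constant sign and $w=\pm u(\alpha)$, which identifies $u(\alpha)$ as the (unique positive) maximizer of $M_\alpha$.

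The step I expect to be the main obstacle is the globalization in the third paragraph: one must simultaneously rule out fold points of $\pi$ (excluded by the nondegeneracy in Lemma \ref{lem:nondegeneracy_in_the_ball}), finite–$\alpha$ blow–up or collapse of $u$ onto $\varphi_1$ for $\alpha$ bounded away from $\lambda_1(B_1)$ (excluded by the properness estimates, which in turn rely on Lemma \ref{lemma:case_alpha_n_bounded}), and the existence of spurious disjoint branches of $\mathcal{S}^+$ (excluded by the local uniqueness near $\alpha=\lambda_1(B_1)$). The passage from the local Implicit Function Theorem description to a single global curve defined on the whole of $(\lambda_1(B_1),+\infty)$ is precisely where these three ingredients have to be combined.
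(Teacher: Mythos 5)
Your proposal is correct and uses essentially the same ingredients as the paper's proof: the Implicit Function Theorem via the nondegeneracy of Lemma \ref{lem:nondegeneracy_in_the_ball}, compactness from Lemma \ref{lemma:case_alpha_n_bounded} to globalize, and the local uniqueness near $\alpha=\lambda_1(B_1)$ (Lemma \ref{lemma:alpha_to_lambda_1} together with Proposition \ref{prop:neighborhood of lambda_1}) to rule out multiple branches. The only difference is packaging — you phrase the globalization as ``proper local homeomorphism onto a simply connected base is a trivial covering,'' whereas the paper continues a maximal arc and shows its interval of definition is all of $(\lambda_1(B_1),+\infty)$ — and your final paragraph on the sign of an arbitrary maximizer is a welcome extra detail the paper leaves implicit.
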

\begin{proof}
To start with, Lemma \ref{lemma:m_M_attained} and Proposition \ref{prop:sign_of_mu_lambda} imply that, for every fixed $\alpha^*>
\lambda_1({B_{1}})$, there exists
at least a corresponding point in $\mathcal{S}^+$. If $(u^\ast,\mu^\ast,\lambda^\ast,\a^\ast)$
denotes any of such points (not necessarily related to $M_{\a^*}$), then by Lemma
\ref{lem:nondegeneracy_in_the_ball} it can be continued, by means of the Implicit Function
Theorem, to an arc $(u(\alpha),\mu(\alpha),\lambda(\a))$, defined on a maximal interval
$(\underline{\alpha},\overline{\alpha})\ni\alpha^\ast$, chosen in such a way that $\mu(\a)>0$
on such interval.
Since $u(\alpha)$ solves the equation,
standard arguments involving the Maximum Principle and Hopf Lemma allow to obtain that
$u(\alpha)>0$ (recall that we are using the $W^{2,k}$-topology) along the arc, which consequently
belongs to $\mathcal{S}^+$. We want to show that
$(\underline{\alpha},\overline{\alpha})=(\lambda_1({B_{1}}),+\infty)$

Let us assume by contradiction $\underline{\alpha}>\lambda_1(\Omega)$.
For $\alpha_n\to \underline{\alpha}^+$, Lemma \ref{lemma:case_alpha_n_bounded} implies that,
up to a subsequence,
$$
u_n\rightharpoonup \bar u \text{ in }H^1_0(\Omega),\ \lambda_n\to \bar\lambda,\ \mu_n\to \bar \mu.
$$
Thus
$$
-\Delta \bar u+\bar \lambda \bar u=\bar \mu \bar u^p \qquad \text{ in } \Omega,
$$
and the convergence $u_n\to \bar u$ is actually strong in $H^2(\Omega)$. Then $\int_\Omega
|\nabla \bar u|^2\, dx=\underline{\alpha}>\lambda_1(\Omega)$, so that $\bar \mu>0$.
Thus Lemma \ref{lem:nondegeneracy_in_the_ball} allows to reach a contradiction with the
maximality of $\underline{\alpha}$, and therefore $\underline{\alpha}=\lambda_1(\Omega)$.
Analogously, we can show that $\overline{\a}=+\infty$.

Once we know that $\mathcal{S}^+$ is the disjoint union of smooth curves,
each one parameterized by $\alpha\in (\lambda_1({B_{1}}),+\infty)$, it only remains to show that
the curve of solutions is indeed unique. Suppose by contradiction that, for $\alpha_n\to
\lambda_1({B_{1}})$, there exist $(u_1(\alpha_n),\mu_1(\alpha_n),\lambda_1(\alpha_n))\neq
(u_2(\alpha_n),\mu_2(\alpha_n),\lambda_2(\alpha_n))$ for every $n$. Then by Lemma
\ref{lemma:alpha_to_lambda_1} both triplets converge to $(\vphi_1,0,-\lambda_1({B_{1}}))$, in
contradiction with Proposition \ref{prop:neighborhood of lambda_1}.
\end{proof}

\begin{corollary}\label{coro:derivate}
Writing
\[
\frac{d}{d\alpha}(u(\alpha),\mu(\a),\l(\a)) = (v(\alpha),\mu'(\a),\l'(\a)),
\]
we have
\begin{equation*}
-\Delta v+\lambda' u+\lambda v=p\mu u^{p-1} v+\mu' u^p, \qquad v\in H^1_0({B_1}),
\end{equation*}
\begin{equation}\label{eq:conditions_on_u'}
\int_{B_1} u v\, dx=0, \qquad  \int_{B_1} \nabla u\cdot \nabla v\, dx=\frac{1}{2},
\end{equation}
and
\begin{equation}\label{eq:mu_u^p_v}
\mu \int_{B_1} u^pv\, dx=\frac{1}{2},
\qquad
\m'\int_{B_1} u^{p+1}\,dx=\l'-\frac{p-1}{2}.
\end{equation}
\end{corollary}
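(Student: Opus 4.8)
The plan is to differentiate, with respect to $\alpha$, the four relations defining $\mathcal{S}^+$, namely the equation $-\Delta u + \lambda u = \mu u^p$ together with $\int_{B_1} u^2\, dx = 1$ and $\int_{B_1} |\nabla u|^2\, dx = \alpha$, exploiting the smooth dependence $\alpha\mapsto(u(\alpha),\mu(\alpha),\lambda(\alpha))$ established in Proposition \ref{prop:focusing_radial_regular_curve}. First I would differentiate the PDE: applying $\frac{d}{d\alpha}$ to $-\Delta u + \lambda u - \mu u^p = 0$ and writing $v = u'(\alpha)$ gives at once $-\Delta v + \lambda' u + \lambda v = p\mu u^{p-1} v + \mu' u^p$, which is the first displayed identity. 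Next, differentiating the mass constraint $\int_{B_1} u^2 = 1$ yields $2\int_{B_1} u v\, dx = 0$, and differentiating the gradient constraint $\int_{B_1} |\nabla u|^2 = \alpha$ yields $2\int_{B_1} \nabla u\cdot\nabla v\, dx = 1$; these are exactly the two relations in \eqref{eq:conditions_on_u'}.

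For the two identities in \eqref{eq:mu_u^p_v}, the idea is to test the differentiated equation against $u$ and against $v$ respectively, and to combine with the constraints already obtained. Testing the equation for $v$ against $u$: $\int_{B_1}\nabla v\cdot\nabla u\, dx + \lambda'\int_{B_1} u^2\, dx + \lambda\int_{B_1} uv\, dx = p\mu\int_{B_1} u^{p-1}\cdot uv\, dx + \mu'\int_{B_1} u^{p+1}\, dx$; here $\int_{B_1} uv = 0$ and $\int_{B_1}\nabla u\cdot\nabla v = \tfrac12$, while on the right $p\mu\int_{B_1} u^p v\, dx$ requires knowing $\int_{B_1} u^p v\, dx$. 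To get that quantity I would instead test the original equation $-\Delta u + \lambda u = \mu u^p$ against $v$, obtaining $\int_{B_1}\nabla u\cdot\nabla v\, dx + \lambda\int_{B_1} uv\, dx = \mu\int_{B_1} u^p v\, dx$, i.e. $\mu\int_{B_1} u^p v\, dx = \tfrac12$, which is the first relation of \eqref{eq:mu_u^p_v}. Substituting this ($p\mu\int_{B_1} u^p v = p/2$) and $\lambda'\cdot 1$ back into the previous identity gives $\tfrac12 + \lambda' = \tfrac{p}{2} + \mu'\int_{B_1} u^{p+1}\, dx$, that is $\mu'\int_{B_1} u^{p+1}\, dx = \lambda' - \tfrac{p-1}{2}$, the second relation.

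There is essentially no serious obstacle here: the only point requiring a word of care is the legitimacy of differentiating under the integral sign and of differentiating the PDE termwise, which is justified because the curve $\alpha\mapsto(u(\alpha),\mu(\alpha),\lambda(\alpha))$ is $C^1$ into $X\times\R^2 = \{w\in W^{2,k}(B_1): w=0 \text{ on }\partial B_1\}\times\R^2$ with $k>N$, so that $u(\alpha)$ and its $\alpha$-derivative $v(\alpha)$ lie in a space continuously embedded in $C^1(\overline{B_1})$, making all the integrals $\int_{B_1} u^2$, $\int_{B_1}|\nabla u|^2$, $\int_{B_1} u^{p+1}$, $\int_{B_1} u^p v$ differentiable functions of $\alpha$ with the expected derivatives. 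In particular $v\in H^1_0(B_1)$ as asserted. I would state this justification in one sentence and then carry out the four differentiations and two testings as above.
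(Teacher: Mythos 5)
Your proposal is correct and coincides with the paper's proof, which is summarized there as ``direct computations (by differentiating $F(u(\alpha),\mu(\a),\l(\a),\a)=0$ and testing the differential equations by $u$ and $v$)''. The four differentiations and the two testings you describe are exactly those computations, and your remark on the legitimacy of differentiating (via the $C^1$ dependence in the $W^{2,k}$ topology from Proposition \ref{prop:focusing_radial_regular_curve}) is the right justification.
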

\begin{proof}
Direct computations (by differentiating $F(u(\alpha),\mu(\a),\l(\a),\a)=0$ and testing
the differential equations by $u$ and $v$).
\end{proof}
In the following, we address the study of the monotonicity properties of the map
\[
\alpha\mapsto (u(\alpha),\mu(\a),\l(\a))
\]
introduced above, $v$ always denoting the derivative of $u$ with respect to $\a$.
\begin{lemma}\label{lemma:lambda'<0}
$\lambda'(\alpha)>0$ for every $\alpha>\lambda_1(B_1)$.
\end{lemma}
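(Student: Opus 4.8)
The plan is to extract information about the sign of $\lambda'(\alpha)$ from the variational characterization of $u(\alpha)$ as the maximizer of $M_\alpha$ together with the linearized equations collected in Corollary~\ref{coro:derivate}. Recall from Lemma~\ref{lemma:M_alpha_minimization} that, with $\mu=\mu(\alpha)$ and $\lambda=\lambda(\alpha)$, the function $u=u(\alpha)$ minimizes $J_{\mu,\lambda}$ on $\tilde{\mathcal U}_\alpha$; and from Lemma~\ref{lemma:morse_index_estimate} that the Morse index of $J_{\mu,\lambda}''(u)$ is $1$ or $2$. In the ball, by Theorem~\ref{thm:uniqueness_nondegeneracy} and Remark~\ref{rem:morse_index_in_B}, the Morse index is exactly $1$ and $u$ is nondegenerate. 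The idea is to use $v=u'(\alpha)$ as a test direction: it lies in the subspace $\{\int_{B_1}uv=0,\ \int_{B_1}\nabla u\cdot\nabla v=0\}$ only up to the defect $\int_{B_1}\nabla u\cdot\nabla v=\tfrac12\neq 0$ of \eqref{eq:conditions_on_u'}, so one must correct $v$ by a multiple of $u$ (or of the first eigenfunction direction) to land in the right tangent space, and then exploit nonnegativity of the second variation there.

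First I would differentiate the scalar identity $\alpha+\lambda=\mu\int_{B_1}u^{p+1}\,dx$ (obtained by testing \eqref{eq:mu_lambda} with $u$) with respect to $\alpha$; combined with the second relation in \eqref{eq:mu_u^p_v}, $\mu'\int_{B_1}u^{p+1}=\lambda'-\tfrac{p-1}{2}$, this expresses $1+\lambda'$ in terms of $\mu$, $\lambda'$ and $\int_{B_1}u^pv$, and the first relation in \eqref{eq:mu_u^p_v}, $\mu\int_{B_1}u^pv=\tfrac12$, pins down $\int_{B_1}u^pv$. The next step is the quadratic-form argument: consider $h=v-\beta u$ with $\beta$ chosen so that $\int_{B_1}\nabla u\cdot\nabla h=0$, i.e. $\beta=\tfrac12/\alpha$; since also $\int_{B_1}uh=\int_{B_1}uv-\beta=-\beta$, a further correction using that $\int_{B_1}u\varphi_1\neq 0$ is needed to make $h$ tangent to $\tilde{\mathcal U}_\alpha$, or alternatively one works directly with the constrained second variation on $\{z:\int u z=\int\nabla u\cdot\nabla z=0\}$ and accounts for the two Lagrange multipliers. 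Evaluating $J_{\mu,\lambda}''(u)[h,h]\geq 0$ and expanding using the linearized PDE $-\Delta v+\lambda'u+\lambda v=p\mu u^{p-1}v+\mu'u^p$ from Corollary~\ref{coro:derivate}, together with \eqref{eq:conditions_on_u'}–\eqref{eq:mu_u^p_v}, should reduce the inequality to a relation whose only possible sign is $\lambda'>0$.

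Concretely, I expect that testing the $v$-equation against $v$ and against $u$ and substituting the integral identities yields something like $J_{\mu,\lambda}''(u)[v,v]=-\lambda'+$ (a manifestly signed combination of $\int\nabla u\cdot\nabla v=\tfrac12$ and $\int u v=0$), and then the nonnegativity of the second variation restricted to the codimension-two tangent space — valid because the Morse index is $1$ and the single negative direction $u$ can be projected out using $J''_{\mu,\lambda}(u)[u,v]=0$ (which follows from $\int_{B_1}u^pv=\tfrac1{2\mu}$ and the $u$-equation) — forces $\lambda'>0$; the strictness comes from nondegeneracy (Theorem~\ref{thm:uniqueness_nondegeneracy}), which prevents the second variation from vanishing on a nontrivial direction transverse to $u$. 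The main obstacle will be the bookkeeping of the two constraints: unlike the single-constraint Nehari-type arguments, here the admissible perturbations must preserve both $\int u^2=1$ and $\int|\nabla u|^2=\alpha$, so one cannot simply plug in $v$ but must carefully build the corrected test function (or argue via the $2\times 2$ system of multipliers) and verify that the correction terms do not spoil the sign. Once that is handled, the conclusion $\lambda'(\alpha)>0$ is immediate, and it holds for all $\alpha>\lambda_1(B_1)$ since $u(\alpha)$ achieves $M_\alpha$ for every such $\alpha$ by Proposition~\ref{prop:focusing_radial_regular_curve}.
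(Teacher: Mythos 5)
Your overall strategy---a quadratic-form argument on directions built from $u$ and $v=\frac{d}{d\alpha}u$, fed by the identities of Corollary \ref{coro:derivate} and by Morse index information---is the right one, but it hinges on a false identity. You claim $J''_{\mu,\lambda}(u)[u,v]=0$; in fact, using \eqref{eq:conditions_on_u'} and the first relation in \eqref{eq:mu_u^p_v},
\[
J''_{\mu,\lambda}(u)[u,v]=\int_{B_1}\nabla u\cdot\nabla v\,dx+\lambda\int_{B_1}uv\,dx-p\mu\int_{B_1}u^{p}v\,dx=\frac12-\frac p2=-\frac{p-1}{2}\neq0.
\]
This is not a harmless slip: if the cross term really vanished, then Morse index one together with $J''_{\mu,\lambda}(u)[u,u]<0$ would force $J''_{\mu,\lambda}(u)[v,v]=\mu'/(2\mu)\geq0$, i.e.\ $\mu'\geq0$ for every $\alpha$, which is false in the supercritical range (where $\mu\to0$ at both ends of the parameter interval). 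The nonvanishing of the cross term is exactly what the proof needs: with $a=J''[u,u]=-(p-1)\mu\int u^{p+1}<0$, $b=J''[u,v]=-(p-1)/2$ and $c=J''[v,v]=\mu'/(2\mu)$, the restriction of $J''_{\mu,\lambda}(u)$ to the two-dimensional space $\mathrm{span}\{u,v\}$ cannot be negative semidefinite because the large Morse index is exactly one (Remark \ref{rem:morse_index_in_B}); since $a<0$ this forces $b^2-ac>0$, i.e.\ $\mu'\int u^{p+1}>-(p-1)/2$, and the second relation of \eqref{eq:mu_u^p_v} turns this into $\lambda'>0$.

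A second, related gap is the route through the constrained second variation on the tangent space of $\tilde{\mathcal{U}}_\alpha$. Since $v$ violates the gradient constraint ($\int\nabla u\cdot\nabla v=1/2$) and any one-direction correction $v-\beta u$ then violates the mass constraint, you need a two-direction correction such as $h=v-su-t\varphi_1$; the inequality $J''_{\mu,\lambda}(u)[h,h]\geq0$ then contains the uncontrolled quantities $J''[\varphi_1,v]$, $J''[u,\varphi_1]$, $J''[\varphi_1,\varphi_1]$, and there is no evident reason these do not spoil the sign---you flag this as the main obstacle but do not resolve it, and the final inequality is never actually derived. The paper sidesteps the constrained second variation entirely and uses only the \emph{unconstrained} Morse index being exactly one (a fact special to the ball, via Theorem \ref{thm:uniqueness_nondegeneracy} and Remark \ref{rem:morse_index_in_B}), applied to the explicit $2\times2$ form above.
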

\begin{proof}
Let $(h,k)\in\R^2$, and let us consider the quadratic form
\[
J''_{\mu,\lambda}(u)[hu+kv,hu+kv] =: a h^2 + 2b hk + c k^2.
\]
Using Corollary \ref{coro:derivate} we obtain
\[
\begin{split}
a &= J''_{\mu,\lambda}(u)[u,u] = \int_{B_1}\left[|\nabla u|^2 + \lambda u^2-p\mu u^{p+1} \right]\,dx = -(p-1)\mu\int_{B_1} u^{p+1}\,dx\\
b &= J''_{\mu,\lambda}(u)[u,v] = \int_{B_1}\left[\nabla u\cdot\nabla v + \lambda uv-p\mu u^pv
\right]\,dx = -\frac{p-1}{2}\\
c &= J''_{\mu,\lambda}(u)[v,v] = \int_{B_1}\left[|\nabla v|^2 + \lambda v^2-p\mu u^{p-1} v^2 \right]\,dx = \frac{\mu'}{2\mu}.
\end{split}
\]
Since $J''_{\mu,\lambda}(u)$ has (large) Morse index equal to one (Remark
\ref{rem:morse_index_in_B}), and $a<0$,
we have that $b^2-ac>0$, i.e.
\begin{equation*}
\mu'\int_{B_1} u^{p+1}\,dx>-\frac{p-1}{2}.
\end{equation*}
The lemma follows by comparing with equation \eqref{eq:mu_u^p_v}.
\end{proof}

\begin{lemma}\label{lemma:mu'}
If $\omega_N=|\partial B_1|$ then
\begin{equation*}
\mu'\int_{B_1} u^{p+1}\,dx=
\frac{p+1}{2(p-1)}\left[\left(-p+1 + \frac{4}{N}\right) - \frac{4\omega_N}{N}u_r(1)v_r(1)\right].
\end{equation*}
\end{lemma}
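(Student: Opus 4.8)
The plan is to combine two Pohozaev-type identities with the formulae from Corollary \ref{coro:derivate}. First I would record the standard Pohozaev identity for $u$: multiplying $-\Delta u+\lambda u=\mu u^p$ by $x\cdot\nabla u$ and integrating over $B_1$ yields, after integration by parts and using $u=0$ on $\partial B_1$ (so $\nabla u = u_r(1)\,\nu$ there),
\[
\left(\frac N2-1\right)\int_{B_1}|\nabla u|^2\,dx+\frac N2\lambda\int_{B_1}u^2\,dx
=\frac{N\mu}{p+1}\int_{B_1}u^{p+1}\,dx+\frac{\omega_N}{2}u_r(1)^2.
\]
Using the two constraints $\int u^2=1$, $\int|\nabla u|^2=\alpha$, and the identity $\alpha+\lambda=\mu\int u^{p+1}$ (from testing the equation by $u$), this gives a first linear relation among $\alpha$, $\lambda$, $\mu\int u^{p+1}$, and $u_r(1)^2$.

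Next I would differentiate this Pohozaev identity with respect to $\alpha$ — equivalently, derive a ``linearized Pohozaev identity'' directly by multiplying the equation for $v$ in Corollary \ref{coro:derivate} by $x\cdot\nabla u$ (and the equation for $u$ by $x\cdot\nabla v$), integrating by parts, and using the boundary conditions $u=v=0$ on $\partial B_1$ together with $\partial_\nu u=u_r(1)$, $\partial_\nu v=v_r(1)$. The boundary term that survives will be proportional to $\omega_N u_r(1)v_r(1)$, which is exactly the term appearing in the statement. Feeding in the normalizations from \eqref{eq:conditions_on_u'}, namely $\int_{B_1}uv=0$ and $\int_{B_1}\nabla u\cdot\nabla v=\tfrac12$, as well as $\mu\int_{B_1}u^pv=\tfrac12$ and $\mu'\int_{B_1}u^{p+1}=\lambda'-\tfrac{p-1}{2}$ from \eqref{eq:mu_u^p_v}, one obtains a second linear relation, now also involving $\mu'$ and $\lambda'$.

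Finally, I would eliminate $\lambda'$ (and the auxiliary quantities) between these relations, using once more \eqref{eq:mu_u^p_v} to trade $\lambda'$ for $\mu'\int_{B_1}u^{p+1}$, and collect terms to arrive at the asserted expression
\[
\mu'\int_{B_1}u^{p+1}\,dx=\frac{p+1}{2(p-1)}\left[\left(-p+1+\frac4N\right)-\frac{4\omega_N}{N}u_r(1)v_r(1)\right].
\]
The main obstacle I anticipate is purely computational bookkeeping: keeping track of the several boundary contributions in the differentiated Pohozaev identity (since both $u$ and $v$ vanish on $\partial B_1$, only the products of their normal derivatives survive, but getting the constant $\tfrac{4\omega_N}{N}$ and the coefficient $\tfrac{p+1}{2(p-1)}$ right requires care) and making sure the differentiation of the constraint integrals is consistent with \eqref{eq:conditions_on_u'}. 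No new analytic input beyond the already established smoothness of $\alpha\mapsto(u,\mu,\lambda)$ and elliptic regularity (so that $u,v\in W^{2,k}$ and the boundary integrals make sense) is needed.
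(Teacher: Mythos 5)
Your strategy coincides with the paper's: write the Pohozaev identity for $u$, use the constraints together with $\alpha+\lambda=\mu\int_{B_1}u^{p+1}\,dx$ to express $\lambda$ as an explicit function of $\alpha$ and $u_r(1)^2$, differentiate in $\alpha$ (which produces the $2u_r(1)v_r(1)$ term), and then trade $\lambda'$ for $\mu'\int_{B_1}u^{p+1}\,dx$ via \eqref{eq:mu_u^p_v}. The one concrete problem is that the Pohozaev identity you record carries the boundary term with the wrong sign: for $-\Delta u=f(u)$ in $B_1$ with $u=0$ on $\partial B_1$ one has
\[
\frac{N-2}{2}\int_{B_1}|\nabla u|^2\,dx+\frac12\int_{\partial B_1}|\nabla u|^2\,(x\cdot\nu)\,d\sigma=N\int_{B_1}F(u)\,dx,
\]
so with $F(u)=\frac{\mu}{p+1}u^{p+1}-\frac{\lambda}{2}u^2$ the term $\frac{\omega_N}{2}u_r(1)^2$ sits on the \emph{left}, i.e.\ it acquires a minus sign when moved to the right-hand side where you placed it. If carried through, your version produces $+\frac{4\omega_N}{N}u_r(1)v_r(1)$ in the final formula rather than $-\frac{4\omega_N}{N}u_r(1)v_r(1)$. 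This is not a cosmetic issue: the sign of that term is exactly what the proof of Lemma \ref{lem:mu'>0_criticalcase} exploits (there $u_r(1)<0$ by Hopf's lemma, and one infers $v_r(1)\le 0$ from $\mu'\le 0$), so the argument downstream would break. With the sign corrected, the remainder of your plan --- including the equivalent ``linearized Pohozaev'' obtained by cross-testing the equations for $u$ and $v$ against $x\cdot\nabla v$ and $x\cdot\nabla u$ --- goes through and yields the stated coefficients.
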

\begin{proof}
Recall that both $u$ and $v$ are radial. Since $\int_{B_1}u^2\, dx=1$,
the standard Pohozaev identity writes
\begin{equation*}
\left(\frac{N}{2}-1\right)\int_{B_1}|\nabla u|^2\,dx+\frac{1}{2}\int_{\partial B_1} |\nabla u|^2(x\cdot\nu)\,d\sigma+\frac{\l N}{2} =\frac{\mu N}{p+1}\int_{B_1} |u|^{p+1}\,dx.
\end{equation*}
Inserting the information that $u$ is radial and the equalities $\a=\int_{B_1} |\nabla u|^2\,dx$,
$\a+\l=\m \int_{B_1} u^{p+1}\,dx$, we obtain
\begin{equation*}
\lambda=\frac{2}{N}\frac{p+1}{p-1}\a -\a -\frac{\omega_N}{N}\frac{p+1}{p-1}u_r(1)^2.
\end{equation*}
Differentiating with respect to $\a$ we have
\[
\l'=\frac{2}{N}\frac{p+1}{p-1} -1 -\frac{2\omega_N}{N}\frac{p+1}{p-1}u_r(1)v_r(1).
\]
The result follows by recalling relation \eqref{eq:mu_u^p_v}.
\end{proof}
The following crucial lemma shows that, if $p$ is subcritical or critical, then $\mu$ is
an increasing function of $\a$.
\begin{lemma} \label{lem:mu'>0_criticalcase}
If $p\leq 1+4/N$ then $\mu'(\alpha)>0$ for every $\alpha>\lambda_1(B_1)$.
\end{lemma}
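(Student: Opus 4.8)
The plan is to read the sign of $\mu'$ off the Pohozaev identity of Lemma \ref{lemma:mu'}, the decisive extra input being that, when $p\le 1+4/N$, a suitable rescaling of $u(\alpha)$ minimizes the mass-constrained energy, which in turn rests on the Gagliardo--Nirenberg inequality.

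\emph{Reduction.} Since $\int_{B_1}u^{p+1}\,dx>0$ and $p>1$, Lemma \ref{lemma:mu'} shows that $\mu'(\alpha)$ and $(-p+1+4/N)-\tfrac{4\omega_N}{N}u_r(1)v_r(1)$ have the same sign. By Hopf's lemma $u_r(1)<0$, while $-p+1+4/N\ge 0$ exactly when $p\le 1+4/N$ (strictly when $p<1+4/N$). Hence it suffices to prove $v_r(1)\ge 0$, with $v_r(1)>0$ when $p=1+4/N$; equivalently, in view of the identity $J_{\mu,\lambda}''(u)[v,v]=\mu'/(2\mu)$ from the proof of Lemma \ref{lemma:lambda'<0} and of $\mu>0$, it suffices to prove $J_{\mu,\lambda}''(u)[v,v]\ge 0$, with strict sign in the critical case.

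\emph{The variational input.} For $p\le 1+4/N$ the Gagliardo--Nirenberg inequality (Appendix \ref{app:A}) makes $\mathcal E|_{\{\mathcal Q=\rho\}}$ bounded below and coercive --- for all $\rho>0$ if $p<1+4/N$, and for $0<\rho<\|\Kwong\|_{L^2(\R^N)}^2$ if $p=1+4/N$, which by Theorem \ref{thm:intro_M} is precisely the range of $\rho=\mu(\alpha)^{2/(p-1)}$ as $\alpha\in(\lambda_1(B_1),+\infty)$. Thus $\mathcal E|_{\{\mathcal Q=\rho\}}$ attains its minimum at a positive, radially decreasing function which, by Theorem \ref{thm:uniqueness_nondegeneracy} and Proposition \ref{prop:focusing_radial_regular_curve}, equals $U:=\mu^{1/(p-1)}u(\alpha)$ for the appropriate $\alpha$; to match the minimizing value with the prescribed one I would use the slicing identity $\min_{\{\mathcal Q=\rho\}}\mathcal E=\min_{\beta}\big(\tfrac{\rho\beta}{2}-\tfrac{\rho^{(p+1)/2}}{p+1}M_\beta\big)$, whose stationary points in $\beta$ are exactly those with $\mu(\beta)^{2/(p-1)}=\rho$, together with uniqueness of the element of $\mathcal P_\rho$. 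Being a mass-constrained minimizer, $U$ satisfies $J_{\mu,\lambda}''(u)[\phi,\phi]\ge 0$ for every $\phi\in H^1_0(B_1)$ with $\int_{B_1}u\phi\,dx=0$, because on that hyperplane $J_{\mu,\lambda}''(u)$ coincides with the constrained Hessian of $\mathcal E$ at $U$; since $v$ satisfies $\int_{B_1}uv\,dx=0$ by Corollary \ref{coro:derivate}, this already yields $\mu'(\alpha)\ge 0$.

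\emph{Strict inequality and the main obstacle.} Upgrading ``$\mu'\ge 0$'' to ``$\mu'>0$'' --- equivalently $v_r(1)\ge 0$, equivalently the stronger bound $\mu'\int_{B_1}u^{p+1}\ge\tfrac{p+1}{2(p-1)}(-p+1+4/N)$ --- is the delicate part, since in the strictly subcritical range the latter is not implied by $\mu'\ge 0$. If $\mu'(\alpha_0)=0$, then $v$ realizes a degenerate minimum of $J_{\mu,\lambda}''(u)$ on $\{\int u\phi=0\}$, so $J_{\mu,\lambda}''(u)v=\kappa u$, and testing with $u$ together with \eqref{eq:mu_u^p_v} forces $\kappa=-\tfrac{p-1}{2}$; on the other hand Lemma \ref{lemma:mu'} gives $u_r(1)v_r(1)=\tfrac{N}{4\omega_N}(-p+1+4/N)$, i.e.\ $v_r(1)<0$ if $p<1+4/N$ and $v_r(1)=0$ if $p=1+4/N$. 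I would rule this out either by analysing the third-order optimality condition of the constrained minimum along the null direction $v$ (which must vanish, leading through the equation for $v$ and a sign discussion to a contradiction), or by combining the real-analyticity of $\alpha\mapsto(u,\mu,\lambda)$ along $\mathcal S^+$ with the local structure of $\mathcal P_\rho$ near $\alpha_0$. Making the identification of $U$ rigorous for every $\alpha$, and carrying out this exclusion, are the points demanding the most care.
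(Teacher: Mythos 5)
Your opening reduction (reading the sign of $\mu'$ off Lemma \ref{lemma:mu'} and Hopf's lemma) matches the paper's Step 1, but after that the argument has two genuine gaps. First, the identification of $u(\alpha)$ with the mass-constrained minimizer for \emph{every} $\alpha$ is circular: the global minimizer of $\mathcal E|_{\{\mathcal Q=\rho\}}$ is $\mu^{1/(p-1)}u(\alpha^*)$ for \emph{some} $\alpha^*$ in the preimage $\mu^{-1}(\rho^{(p-1)/2})$, and to conclude that this happens at every $\alpha$ you invoke ``uniqueness of the element of $\mathcal P_\rho$'' --- but that uniqueness is deduced in the paper precisely from the injectivity of $\mu$, i.e.\ from the present lemma (see Case 1 of the proof of Theorem \ref{thm:intro_existence}). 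If $\mu$ were not injective, your slicing identity only controls the $\alpha$'s where the minimum over $\beta$ is attained, and says nothing about the others; even your claim that $\rho=\mu(\alpha)^{2/(p-1)}$ stays below $\|\Kwong\|_{L^2}^2$ in the critical case tacitly uses monotonicity. Second, and more seriously, the strict inequality --- which is the entire content of the lemma, since $\mu'\ge 0$ follows much more cheaply from the Morse-index-one fact already recorded in Remark \ref{rem:morse_index_in_B} and the computation in Lemma \ref{lemma:lambda'<0} --- is not proved. You correctly isolate the degenerate scenario $J''_{\mu,\lambda}(u)v=-\tfrac{p-1}{2}u$ with $v_r(1)\le 0$, but then only name two possible strategies. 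The real-analyticity route is unavailable: the nonlinearity $u\mapsto u^p$ is only $C^2$ for non-integer $p$ (Appendix \ref{app:B}), so $\mathcal S^+$ is not known to be analytic. The third-order optimality route is not carried out, and it is not clear it closes, particularly in the critical case where $-p+1+4/N=0$ and one must extract strict positivity from a boundary term alone.

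For comparison, the paper derives the contradiction from $\mu'(\bar\alpha)\le 0$ by an entirely different, self-contained mechanism: Step 2 shows $v>0$ near $\partial B_1$ (in the critical case this needs a third-order Taylor expansion of $v$ at $r=1$ using $\lambda'>0$ from Lemma \ref{lemma:lambda'<0}); Step 3 shows $v\le 0$ on the complementary ball $B_{\bar r}$ by a nodal-domain argument --- two disjoint nonnegative pieces of $v$ would span a two-dimensional subspace on which $J''_{\mu,\lambda}(u)$ is negative definite, contradicting Morse index one; Step 4 then pits the identity $\mu\int_{B_1}u^pv\,dx=\tfrac12$ from \eqref{eq:mu_u^p_v} against $\int_{B_1}uv\,dx=0$ and the radial monotonicity of $u$. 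None of these ingredients appears in your proposal, so the decisive part of the proof is missing.
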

\begin{proof}
The proof goes by contradiction: suppose that $\mu'(\bar \alpha)\leq 0$ for some $\bar\alpha>\lambda_1(B_1)$. In the remaining of the proof all quantities are evaluated at such $\bar\alpha$.

\paragraph{Step 1.} Let $v:=\frac{d}{d\alpha} u|_{\a=\bar\a}$, then $v_r(1)<0$ in case $p< 1+4/N$
and $v_r(1)\leq0$ if $p= 1+4/N$. This is an immediate consequence of Lemma \ref{lemma:mu'}, being
$u_r(1)<0$ by Hopf Lemma.

\paragraph{Step 2.} We claim that, if $r$ is sufficiently close to $1^-$, then $v(r)>0$. Since $v(1)=0$, this is obvious if $v_r(1)<0$. Hence it only remains to consider the case $p= 1+4/N$ and $v_r(1)=0$.

From the equation for $v$ written in the radial coordinate:
\begin{equation*}
-v_{rr}-\frac{N-1}{r} v_r+\lambda v + \lambda' u=p\mu u^{p-1} v +\mu' u^p,\quad r\in (0,1)
\end{equation*}
we know (by letting $r\to1^-$) that $v_{rr}(1)=0$. Differentiating both sides of the above equation, we can write
\begin{multline*}
-v_{rrr}+\frac{N-1}{r^2}v_{r}-\frac{N-1}{r}v_{rr}+\lambda v_r+\lambda'u_r\\=  p(p-1) \mu u^{p-2} u_r v + p \mu u^{p-1} v_r+p\mu'u^{p-1}u_r;
\end{multline*}
now, if $p\geq2$, the limit as $r\to1^-$ yields
$$
-v_{rrr}(1)+\lambda' u_r(1)=0.
$$
On the other hand, if $p<2$, the same identity holds, since by the l'H\^opital's rule
$$
\lim_{r\to 1^-} u^{p-2}u_rv=\lim_{r\to 1^-} \frac{u_{rr} v+u_rv_r}{(2-p)u^{1-p}u_r}=\frac{u_{rr}(1)v(1)+u_r(1)v_r(1)}{(2-p)u_r(1)}u(1)^{p-1}=0.
$$
Thus $v_{rrr}(1)<0$ by Lemma \ref{lemma:lambda'<0}, and the claim follows.

\paragraph{Step 3.} Let $\bar r:= \inf\left\{r:v>0\text{ in }(r,1)\right\}$ ($\bar r>0$ since
$\int_{B_1} uv \,dx = 0$). We claim that $v\leq 0$ in $B_{\bar r}$. If not,
there would be $0\leq r_1<r_2 \leq
\bar r$ with the property that $v>0$ in $(r_1,r_2)$ and $r_iv(r_i)=0$. Defining
\[
v_1 := v|_{B_{r_2}\setminus B_{r_1}},\qquad v_2 := v|_{B_{1}\setminus B_{\bar r}}
\]
we have that $v_i\in H^1_0(B_1)$, $v_i\geq 0$ for $i=1,2$, and $v_1,v_2$ are linearly independent. One can use
the equation for $v$ in order to evaluate
\[
J''_{\mu,\l}(u)[v,v_i] = \int_{B_1} (\nabla v\cdot \nabla v_i + (\l  -p\mu u^{p-1})v v_i   )\,dx=
\int_{B_1}(\mu' u^p v_i - \l' u v_i)\,dx < 0
\]
and obtain
\[
J''_{\mu,\l}(u)[t_1v_1+t_2v_2,t_1v_1+t_2v_2]<0\quad\text{ whenever }t_1^2 + t_2^2\neq0,
\]
in contradiction with the fact that the Morse index of $u$ is $1$ (Remark
\ref{rem:morse_index_in_B}).

\paragraph{Step 4.} Once we know that $v\leq 0$ in $B_{\bar r}$ and that $v>0$ in $B_1\setminus
B_{\bar r}$, we can combine the first equations in \eqref{eq:conditions_on_u'} and \eqref{eq:mu_u^p_v},
together with the fact that $u$ is monotone decreasing with respect to $r$, to write
\[
\begin{split}
\frac{1}{2\mu} &= \int_{B_1} u^p v\,dx = \int_{B_1\setminus B_{\bar r}} u^pv\,dx + \int_{B_{\bar r}} u^pv\,dx\\
   &\leq (\max_{B_1\setminus B_{\bar r}} u^{p-1}) \int_{B_1\setminus B_{\bar r}} uv \,dx+
    (\min_{B_{\bar r}} u^{p-1}) \int_{B_{\bar r}} uv\,dx\\
   &= u^{p-1}(\bar r) \int_{B_1\setminus B_{\bar r}} uv\,dx +
    u^{p-1}(\bar r) \int_{B_{\bar r}} uv \,dx= 0,
\end{split}
\]
a contradiction.
\end{proof}
\begin{remark}
When $1+4/N<p<2^*-1$, Lemma \ref{lemma:mu_tends_to_zero} implies that $\mu(+\infty)=0$. Since
also $\mu(\l_1(B_1)^+)=0$, we deduce that $\mu'$ must change sign in the supercritical regime.
Numerical experiments suggest that this should happen only once, so that
$\mu$ should have a unique global maximum and be strictly monotone elsewhere,
see Remark \ref{rem:simulation} ahead.
\end{remark}
We are ready to prove the existence of least energy solutions for equation
\eqref{eq:stationary_intro}.
\begin{proof}[Proof of Theorem \ref{thm:intro_existence}]
Recalling Definition \ref{defi:les}, let $\rho>0$ be fixed, and let $U\in\mathcal{P}_\rho$. Then
\[
\int_{B_1}U^2\,dx=\rho, \ U>0, \quad\text{ and }-\Delta U+ \lambda U=U^p
\]
for some $\lambda$. Then, setting
\(
u = \rho^{-1/2}U,
\)
direct calculations yield
\[
\int_{B_1}u^2\,dx=1, \ u>0, \quad\text{ and }-\Delta u+ \lambda u=\rho^{(p-1)/2}u^p.
\]
Writing $\int_{B_1}|\nabla u|^2\,dx=\alpha$, this amounts to say that
\(
(u,\rho^{(p-1)/2},\lambda,\alpha)\in\mathcal{S}^+.
\)
Equivalently,
\[
U\in\mathcal{P}_\rho \iff \rho = \mu^{2/(p-1)}\text{ and }U=\mu^{1/(p-1)}u\text{
for some }(u,\mu,\l,\a)\in\mathcal{S}^+.
\]
We divide the end of the proof in three cases.

\paragraph{Case 1: $1<p<1+4/N$.} By Lemmas \ref{lemma:mu_tends_to_zero},
\ref{lem:mu'>0_criticalcase} and Proposition \ref{prop:focusing_radial_regular_curve} we have
that, for every $\rho$, there exists exactly one point in $\mathcal{S}^+$ satisfying
$\mu^{2/(p-1)}=\rho$.

\paragraph{Case 2: $p=1+4/N$.} The same as the previous case, taking into account that, by
Lemma \ref{lemma:mu_tends_to_zero}, $\mathcal{P}_{\mu^{2/(p-1)}}$ is not empty if and only
if $\mu<\|\Kwong\|_{L^2(\R^N)}^{p-1}$.

\paragraph{Case 3: $1+4/N<p<2^*-1$.} Since in this case $\mu(\lambda_1(B_1))=\mu(+\infty)=0$
(by Lemma \ref{lemma:mu_tends_to_zero}), then
\[
\mu^*=\max_{(\lambda_1(B_1),+\infty)}\mu
\]
is well defined and achieved. Furthermore, $\mathcal{P}_{\mu^{2/(p-1)}}$ is empty for
$\mu>\mu^*$, and it contains at least two points for $0<\mu<\mu^*$. It remains to prove that, if
$0<\rho\leq\rho^*=(\mu^*)^{(p-1)/2}$, then $\gslev_{\rho}$ is achieved. This is immediate whenever
$\mathcal{P}_\rho$ is finite. Otherwise, let $u_n=u(\alpha_n)$, with $\mu(\alpha_n)=\rho^{(p-1)/2}$,
denote a minimizing sequence. Then Lemma \ref{lemma:mu_tends_to_zero} implies that $\alpha_n$ is
bounded, and by continuity the same is true for $\lambda_n$. We deduce that, up to subsequences,
$u_n \to u^*\in\gsset_{\bar\mu}$, and $J_{\bar\mu,0}(u^*)=\gslev_{\rho}$.
\end{proof}
\begin{remark}\label{rem:segno_scontato}
By comparing Theorem \ref{thm:intro_existence} and Proposition \ref{prop:fibichmerle}, we have
that when $p\leq1+4/N$ and positive least energy solutions exist, the condition $U>0$ may be safely
removed from Definition \ref{defi:les} without altering the problem (in fact, also the condition
$-\Delta U + \l U =U^{p+1}$ for some $\lambda$ is not necessary). On the other hand, in other cases
it is essential. For instance, when $p$ is critical then the set of non necessarily positive
solutions with fixed mass
\[
\mathcal{P}_\rho'=\left\{ U\in H^1_0(B_1):\ \mathcal{Q}(U)=\rho,  \ \exists\lambda:\ -\Delta U+
\lambda U=U^p \right\}
\]
is not empty also when $\rho\geq \|\Kwong\|_{L^2(\R^N)}^{2}$, as illustrated in
\cite[Figure 1]{FibichMerle2001}.
\end{remark}

\section{Stability results}\label{section:Stability}

In this section we discuss orbital stability of standing wave solutions
$e^{i\lambda t}U(x)$ for the NLS \eqref{eq:Schrodinger_intro}. We recall
that such solutions are called orbitally stable if for each $\eps>0$ there exists $\delta>0$ such
that whenever $\Phi_0\in H^1_0(B_1,\C)$ is such that $\|\Phi_0-U\|_{H^1_0(B_1,\C)}<\delta$ and
$\Phi(t,x)$ is the solution of \eqref{eq:Schrodinger_intro} with $\Phi(0,\cdot)=\Phi_0$ in some
interval $[0,t_0)$, then $\Phi(t,\cdot)$ can be continued to a solution in $0\leq t<\infty$ and
\[
\sup_{0<t<\infty}\inf_{s\in \R} \|\Phi(t,\cdot)-e^{-i\lambda s}U\|_{H^1_0(B_1,\C)}<\eps;
\]
otherwise, they are called unstable.
To do this, we lean on the following result by Fukuizumi, Selem and Kikuchi, which expresses
in our context the abstract theory developed in \cite{GrillakisShatahStrauss}.
\begin{proposition}[{\cite[Proposition 5]{Fukuizumi2012}}]
Let us assume local existence as in Theorems \ref{thm:intro_stability} and \ref{thm:intro_stab_2},
and let $R_\lambda$ be the unique positive solution of \eqref{eq:stationary_intro}.
\begin{itemize}
 \item If $\partial_\lambda\|R_\lambda\|_{L^2}^2>0$ then $e^{i\lambda t}R_\lambda$ is orbitally
 stable;
 \item if $\partial_\lambda\|R_\lambda\|_{L^2}^2<0$ then $e^{i\lambda t}R_\lambda$ is unstable.
\end{itemize}
\end{proposition}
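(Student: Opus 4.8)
The plan is to invoke the abstract orbital stability theory of \cite{GrillakisShatahStrauss}, whose hypotheses I would verify one by one in our setting. First I would recast \eqref{eq:Schrodinger_intro} as a Hamiltonian system on $H^1_0(B_1,\C)$ with conserved energy $\mathcal{E}$, and with the one-parameter unitary group $T(s)\Phi=e^{is}\Phi$ of phase rotations, whose associated conserved charge is $\mathcal{Q}$. The standing wave $e^{i\lambda t}R_\lambda$ is then the relative equilibrium generated by $R_\lambda$, which is a critical point of the action $\mathcal{A}_\lambda=\mathcal{E}+\lambda\mathcal{Q}$; indeed $\mathcal{A}_\lambda'(R_\lambda)=0$ is exactly \eqref{eq:stationary_intro}. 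The local well-posedness demanded by the abstract theory is taken as a standing assumption.

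Next I would analyze the Hessian $\mathcal{L}:=\mathcal{A}_\lambda''(R_\lambda)$ on $H^1_0(B_1,\C)$, which decouples on real and imaginary parts into $L_+=-\Delta+\lambda-pR_\lambda^{p-1}$ and $L_-=-\Delta+\lambda-R_\lambda^{p-1}$. Since $R_\lambda>0$ solves $L_-R_\lambda=0$ and is the principal eigenfunction of $L_-$, the operator $L_-$ is nonnegative with one-dimensional kernel $\mathrm{span}\{R_\lambda\}$, which is precisely the tangent to the group orbit $\tfrac{d}{ds}T(s)R_\lambda|_{s=0}=iR_\lambda$, as the theory requires. For $L_+$ I would use that $R_\lambda$ achieves $a_\lambda$, hence has Morse index one as a critical point of $\mathcal{A}_\lambda$ (equivalently of $J_{\mu,\lambda}$ with $\mu=1$), so $L_+$ has exactly one negative eigenvalue; crucially, the nondegeneracy statement in Theorem \ref{thm:uniqueness_nondegeneracy}, valid because on the ball with Dirichlet conditions there are no translation modes, gives $\ker L_+=\{0\}$. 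Thus $\mathcal{L}$ has exactly one negative eigenvalue, a one-dimensional kernel generated by the symmetry, and the remainder of its spectrum positive and bounded away from zero by compactness of the resolvent on the bounded domain.

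Then I would compute the scalar function $d(\lambda):=\mathcal{A}_\lambda(R_\lambda)$. Differentiating and using $\mathcal{A}_\lambda'(R_\lambda)=0$ together with $\partial_\lambda\mathcal{A}_\lambda=\mathcal{Q}$ yields $d'(\lambda)=\mathcal{Q}(R_\lambda)=\|R_\lambda\|_{L^2}^2$, hence $d''(\lambda)=\partial_\lambda\|R_\lambda\|_{L^2}^2$. The Grillakis--Shatah--Strauss criterion now states that, since the number of negative eigenvalues of $\mathcal{L}$ equals one, the standing wave is orbitally stable when $d$ is strictly convex at $\lambda$, i.e. when $d''(\lambda)>0$, and unstable when $d''(\lambda)<0$ — which is exactly the claim.

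The step I expect to need the most care is the spectral analysis of $\mathcal{L}$, specifically confirming $\ker L_+=\{0\}$ and that $L_+$ has Morse index exactly one on the bounded domain; this is where one genuinely uses the uniqueness and nondegeneracy results collected in Theorem \ref{thm:uniqueness_nondegeneracy}, with the Dirichlet boundary condition ruling out the translational kernel present in $\R^N$. A secondary technical point is checking the abstract hypotheses of \cite{GrillakisShatahStrauss} in the bounded-domain framework (smoothness of the flow, spectral gap), for which local well-posedness is assumed and the gap follows from compactness of the embedding $H^1_0(B_1)\hookrightarrow L^2(B_1)$.
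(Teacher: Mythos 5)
Your proposal is correct, and it is essentially the same argument as the paper's: the paper does not prove this proposition at all but imports it verbatim from \cite[Proposition 5]{Fukuizumi2012}, whose proof is precisely the Grillakis--Shatah--Strauss verification you outline (decomposition of the Hessian into $L_\pm$, nonnegativity of $L_-$ with kernel $\mathrm{span}\{R_\lambda\}$, Morse index one and nondegeneracy of $L_+$ as collected in Theorem \ref{thm:uniqueness_nondegeneracy}, and the criterion $d''(\lambda)=\partial_\lambda\|R_\lambda\|_{L^2}^2$ up to a harmless normalization constant). No gaps to report.
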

\begin{corollary}\label{coro:mu'>0=>stab}
Let $(u(\a),\mu(\a),\lambda(\a),\alpha)\in\mathcal{S}^+$, with $U(\a)=\mu^{1/(p-1)}(\a)u(\a)$
denoting the corresponding solution of \eqref{eq:stationary_intro} (with $\l=\l(\a)$). Then
\begin{itemize}
 \item if $\mu'(\a)>0$ then $e^{i\lambda(\a) t}U(\a)$ is orbitally stable;
 \item if $\mu'(\a)<0$ then $e^{i\lambda(\a) t}U(\a)$ is unstable.
\end{itemize}
\end{corollary}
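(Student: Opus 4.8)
The plan is to reduce Corollary \ref{coro:mu'>0=>stab} to the preceding Proposition \cite[Proposition 5]{Fukuizumi2012}, whose hypothesis is a sign condition on $\partial_\lambda\|R_\lambda\|_{L^2}^2$. The link between the two statements is the bijective dictionary already established: along $\mathcal{S}^+$, the solution of \eqref{eq:stationary_intro} with chemical potential $\lambda=\lambda(\a)$ is exactly $R_{\lambda(\a)}=U(\a)=\mu(\a)^{1/(p-1)}u(\a)$, because Theorem \ref{thm:uniqueness_nondegeneracy} guarantees that \eqref{eq:stationary_intro} has a unique positive solution for each $\lambda>-\lambda_1(B_1)$. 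Hence $\|R_{\lambda(\a)}\|_{L^2}^2=\mu(\a)^{2/(p-1)}\int_{B_1}u(\a)^2\,dx=\mu(\a)^{2/(p-1)}=:\rho(\a)$, using the $L^2$-normalization $\int_{B_1}u^2\,dx=1$.

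First I would argue that the map $\a\mapsto\lambda(\a)$ is a local diffeomorphism, i.e. $\lambda'(\a)\neq0$: this is precisely Lemma \ref{lemma:lambda'<0}, which gives $\lambda'(\a)>0$ for all $\a>\lambda_1(B_1)$. Therefore $\a$ can locally be used as a smooth parameter in place of $\lambda$, and by the chain rule
\[
\partial_\lambda\|R_\lambda\|_{L^2}^2\Big|_{\lambda=\lambda(\a)}=\frac{d}{d\a}\left(\mu(\a)^{2/(p-1)}\right)\cdot\frac{1}{\lambda'(\a)}=\frac{2}{p-1}\,\mu(\a)^{(3-p)/(p-1)}\,\frac{\mu'(\a)}{\lambda'(\a)}.
\]
Since $\mu(\a)>0$ on $\mathcal{S}^+$ and $\lambda'(\a)>0$, the quantity $\partial_\lambda\|R_\lambda\|_{L^2}^2$ has exactly the same sign as $\mu'(\a)$. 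Plugging this into the two bullet points of \cite[Proposition 5]{Fukuizumi2012} yields orbital stability when $\mu'(\a)>0$ and instability when $\mu'(\a)<0$, which is the claim.

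There is essentially no serious obstacle here, since all the analytic content has already been developed: the uniqueness/nondegeneracy theorem identifies $U(\a)$ with $R_{\lambda(\a)}$, the smooth parametrization of $\mathcal{S}^+$ by $\a$ comes from Proposition \ref{prop:focusing_radial_regular_curve}, and the nonvanishing of $\lambda'$ is Lemma \ref{lemma:lambda'<0}. The only point deserving a word of care is that $\mathcal{S}^+$ is parametrized by $\a$ rather than by $\lambda$, so one must pass through the change of variables $\lambda=\lambda(\a)$ and invoke $\lambda'(\a)>0$ to justify that differentiation in $\lambda$ and in $\a$ produce the same sign; this is exactly where Lemma \ref{lemma:lambda'<0} is used, and it is what makes the corollary a clean consequence of the Fukuizumi–Selem–Kikuchi criterion.
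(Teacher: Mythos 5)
Your proof is correct and follows essentially the same route as the paper: identify $R_{\lambda(\a)}=\mu(\a)^{1/(p-1)}u(\a)$ via uniqueness, compute $\|R_{\lambda(\a)}\|_{L^2}^2=\mu(\a)^{2/(p-1)}$, and use $\lambda'(\a)>0$ from Lemma \ref{lemma:lambda'<0} to transfer the sign of $\mu'(\a)$ to $\partial_\lambda\|R_\lambda\|_{L^2}^2$ before invoking the Fukuizumi--Selem--Kikuchi criterion. The chain-rule identity you derive is exactly the one displayed in the paper's proof.
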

\begin{proof}
Taking into account Proposition \ref{prop:focusing_radial_regular_curve} and Lemma
\ref{lemma:lambda'<0}, and reasoning as in the proof of Theorem \ref{thm:intro_existence},
we have that $R_{\l(\a)}=\mu^{1/(p-1)}(\a)u(\a)$,{ so that }
\[
\partial_\lambda\|R_\lambda\|_{L^2}^2
=\frac{\left(\mu^{2/(p-1)}\right)'(\a)}{\l'(\a)}=\frac{2\mu^{(3-p)/(p-1)}(\a) }{(p-1)\l'(\a)}
\mu'(\a).\qedhere
\]
\end{proof}
We recall that $\mu'$ may be negative only when $p$ is supercritical. Such case is enlightened by
the following lemma.
\begin{lemma}\label{lemma:gs=>mu'>0}
Let $p>1+4/N$, and consider the map $\alpha\mapsto(u(\alpha),\mu(\alpha),\lambda(\alpha))$
defined as in Proposition \ref{prop:focusing_radial_regular_curve}. If $\alpha_1<\alpha_2$ are such that
\[
\mu(\alpha)>\mu(\alpha_1)=\mu(\alpha_2)=:\bar\mu\quad\text{ for every }\alpha\in(\alpha_1,\alpha_2)
\]
then
\[
J_{\bar\mu,0}(u(\alpha_1))<J_{\bar\mu,0}(u(\alpha_2)).
\]
\end{lemma}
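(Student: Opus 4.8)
The plan is to express the difference $J_{\bar\mu,0}(u(\alpha_2))-J_{\bar\mu,0}(u(\alpha_1))$ as an integral along the curve $\alpha\mapsto u(\alpha)$ and show it is positive. Along the curve we have $U(\alpha)=\mu(\alpha)^{1/(p-1)}u(\alpha)\in\mathcal{S}^+$, so $u(\alpha)$ solves $-\Delta u(\alpha)+\lambda(\alpha)u(\alpha)=\mu(\alpha)u(\alpha)^p$; in particular $J_{\mu(\alpha),\lambda(\alpha)}'(u(\alpha))=0$. First I would compute $\frac{d}{d\alpha}J_{\bar\mu,0}(u(\alpha))$ using Corollary \ref{coro:derivate}. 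Writing $v=v(\alpha)=u'(\alpha)$, the chain rule gives
\[
\frac{d}{d\alpha}J_{\bar\mu,0}(u(\alpha))=\int_{B_1}\Big(\nabla u(\alpha)\cdot\nabla v-\bar\mu\, u(\alpha)^p v\Big)\,dx.
\]
Now I use the equation for $u(\alpha)$ to replace $\int\nabla u(\alpha)\cdot\nabla v\,dx = \int(-\lambda(\alpha)u(\alpha)v+\mu(\alpha)u(\alpha)^p v)\,dx$, together with the orthogonality relation $\int_{B_1}u(\alpha)v\,dx=0$ from \eqref{eq:conditions_on_u'} and the identity $\mu(\alpha)\int_{B_1}u(\alpha)^p v\,dx=\tfrac12$ from \eqref{eq:mu_u^p_v}. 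This collapses the expression to
\[
\frac{d}{d\alpha}J_{\bar\mu,0}(u(\alpha))=(\mu(\alpha)-\bar\mu)\int_{B_1}u(\alpha)^p v\,dx=\frac{\mu(\alpha)-\bar\mu}{2\mu(\alpha)}.
\]

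Then I integrate over $[\alpha_1,\alpha_2]$:
\[
J_{\bar\mu,0}(u(\alpha_2))-J_{\bar\mu,0}(u(\alpha_1))=\int_{\alpha_1}^{\alpha_2}\frac{\mu(\alpha)-\bar\mu}{2\mu(\alpha)}\,d\alpha.
\]
By the hypothesis $\mu(\alpha)>\bar\mu>0$ for every $\alpha\in(\alpha_1,\alpha_2)$, the integrand is strictly positive on the open interval, hence the integral is strictly positive, which gives $J_{\bar\mu,0}(u(\alpha_1))<J_{\bar\mu,0}(u(\alpha_2))$, as claimed.

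The only delicate points are bookkeeping rather than conceptual: one must make sure the curve $\alpha\mapsto u(\alpha)$ is $C^1$ (which is exactly the content of Proposition \ref{prop:focusing_radial_regular_curve} and Corollary \ref{coro:derivate}), so that differentiating $J_{\bar\mu,0}$ under the integral sign and applying the chain rule is legitimate, and one should note that $\bar\mu>0$ so no division by zero occurs. I expect the main (minor) obstacle to be simply organizing the substitution cleanly — keeping track of which terms come from the equation satisfied by $u(\alpha)$ versus the constraint derivatives in Corollary \ref{coro:derivate} — but no new estimate beyond what is already established is needed.
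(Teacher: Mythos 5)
Your proof is correct and is essentially the paper's argument in a slightly different packaging: the paper applies the mean value theorem to $M(\alpha)=\int_{B_1}u(\alpha)^{p+1}\,dx$ using $M'(\alpha)=\frac{p+1}{2\mu(\alpha)}$ from \eqref{eq:mu_u^p_v}, which is the same computation as your identity $\frac{d}{d\alpha}J_{\bar\mu,0}(u(\alpha))=\frac{\mu(\alpha)-\bar\mu}{2\mu(\alpha)}$ followed by integration. Both rest on exactly the relations in Corollary \ref{coro:derivate}, so no further comparison is needed.
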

\begin{proof}
Writing $M(\alpha) = M_\alpha = \int_{{B_{1}}} u^4(\alpha)\,dx$, we have that
\[
2 J_{\bar\mu,0}(\alpha_i) = \alpha_i - \frac{\bar\mu}{2} M(\alpha_i).
\]
Now, equation \eqref{eq:mu_u^p_v} yields $M'(\alpha) = 4\int_{B_{1}} u^3 v \,dx = 2/\mu(\alpha)$,
where as usual $v:=\frac{d}{d\alpha} u$. Lagrange Theorem applied to $M$ forces the existence of
$\alpha^*\in(\alpha_1,\alpha_2)$ such that
\[
\frac{M(\alpha_2)-M(\alpha_1)}{\alpha_2-\alpha_1} = \frac{2}{\mu(\alpha^*)} < \frac{2}{\bar\mu},
\]
which is equivalent to the desired statement.
\end{proof}
We are ready to give the proofs of our stability results.
\begin{proof}[Proof of Theorems \ref{thm:intro_stability}, \ref{thm:intro_stab_2}]
The proof in the subcritical and critical case is a direct consequence of Lemma
\ref{lem:mu'>0_criticalcase} and Corollary \ref{coro:mu'>0=>stab} (recall that in such
case there is a full correspondence between least energy solutions and least action ones).
To show Theorem \ref{thm:intro_stability}, point 2., we prove stability for any $\rho>0$ such that
$\bar\mu = \rho^{(p-1)/2}$ is a regular value of the map $\alpha\mapsto\mu(\a)$, the conclusion
following by Sard Lemma. Recalling that $\mu(\lambda_1(B_1))=\mu(+\infty)=0$, we have that, if
$\bar\mu$ is regular, then its counterimage $\{\a:\mu(\a)=\bar\mu\}$ is the union of a finite
number of pairs $\{\a_{i,1},\a_{i,2}\}$, each of which satisfies the assumptions of Lemma
\ref{lemma:gs=>mu'>0}, and moreover $\mu'(\a_{i,1})>0>\mu'(\a_{i,2})$. Since such counterimage
is in 1-to-1 correspondence with $\mathcal{P}_\rho$, and
\[
\mathcal{E}(U(\a_{i,j}))=\mathcal{E}(\bar\mu^{1/(p-1)}u(\a_{i,j}))=\bar\mu^{2/(p-1)}J_{\bar\mu,0}(u(\a_{i,j})),
\]
we deduce from Lemma \ref{lemma:gs=>mu'>0} that the least energy solution corresponds to
$\a_{i,1}$, for some $i$, and the conclusion follows again by Corollary \ref{coro:mu'>0=>stab}.
\end{proof}
\begin{remark} \label{rem:simulation}
In the supercritical case $p>1+4/N$, we expect orbital stability for every $\rho\in (0,\rho^\ast)$,
and instability for $\rho=\rho^*$. Indeed, in case $N=3$, $p=3$, we have plotted numerically the
graph of $\mu(\alpha)$ in Figure \ref{fig:mu}.
\begin{figure}[h]
\begin{center}
\includegraphics[scale=0.8]{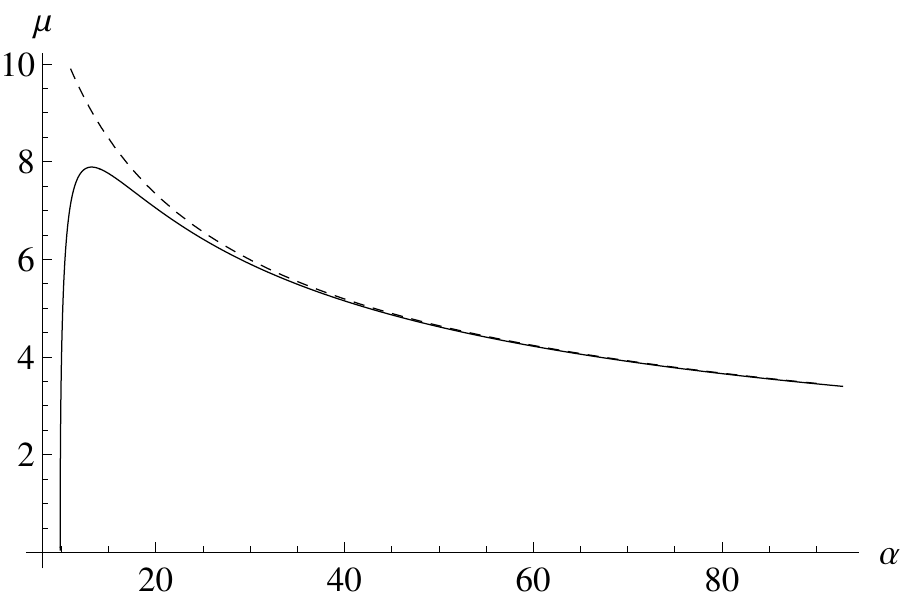}
\caption{numerical graph of $\alpha\mapsto\mu(\a)$ in the supercritical case $N=3$, $p=3$
(continuous line) and of the map
$\a\mapsto \a^{-1/2}\cdot\sqrt{3}\int_{\R^3}Z^2_{3,3}\,dx$ (dashed line). The latter is the theoretical asymptotic expansion of
$\mu(\a)$ as $\alpha\to+\infty$, as predicted by Lemmas \ref{rem:blowup},
\ref{lemma:mu_tends_to_zero}.}
\label{fig:mu}
\end{center}
\end{figure}\break
The picture suggests that $\mu$ has a unique local maximum $\mu^*$, associated to the maximal value
of the mass
$\rho^*=(\mu^*)^{(p-1)/2}$. For any $\mu<\mu^*$, we have exactly two solutions, and the least energy
one corresponds to $\mu'(\alpha)>0$, hence it is associated with an orbitally stable standing wave.
For $\mu=\mu^*$ we have exactly one solution; in such case the abstract theory developed in
\cite{GrillakisShatahStrauss} predicts the corresponding
standing wave to be unstable.
\end{remark}


\appendix

\section{Gagliardo-Nirenberg inequalities}\label{app:A}
It is proved in \cite{Weinstein1983} that the following sharp Gagliardo-Nirenberg
inequality holds for every $u\in H^1_0(\Omega)$
\begin{equation}\label{eq:gagliardo_nirenberg}
\|u\|_{L^{p+1}(\R^N)}^{p+1}\leq C_{N,p}\|u\|_{L^2(\R^N)}^{p+1-N(p-1)/2}
\|\nabla u\|_{L^2(\R^N)}^{N(p-1)/2},
\end{equation}
and that the best constant $C_{N,p}$ is achieved by (any rescaling of) $\Kwong$.

When dealing with $H^1_0(\Omega)$, $\Omega\neq\R^N$, one can prove that the identity
holds with the same best constant: in fact, one inequality is trivial, and the other
is obtained by constructing a suitable competitor of the form $u(x)=(h\Kwong(kx)-j)^+$,
for suitable $h,k,j$, and exploiting the exponential decay of $Z$. Contrarily to the previous
case, now such constant can not be achieved, otherwise we would contradict \cite{Weinstein1983}.
This is related with the maximization problem \eqref{eq:M_intro}, since
\[
C_{N,p} = \sup_{H^1_0(\Omega)\setminus\{0\}} \frac{\|u\|_{L^{p+1}(\Omega)}^{p+1}}{\|u\|_{L^2(\Omega)}^{p+1-N(p-1)/2}
\|\nabla u\|_{L^2(\Omega)}^{N(p-1)/2}}= \sup_{\alpha\geq\lambda_1(\Omega)}\frac{M_\a}{\a^{N(p-1)/2}}.
\]
By the above considerations we deduce that
\begin{equation}\label{eq:ganiM}
M_\a<C_{N,p}\a^{N(p-1)/2}\text{ for every }\a,\qquad \lim_{\alpha\to+\infty}
\frac{M_\a}{\a^{N(p-1)/2}}=C_{N,p}.
\end{equation}
For the readers convenience, we deduce the following well known result.
\begin{proposition}\label{prop:fibichmerle}
Let $\rho>0$ be fixed. The infimum
\[
\inf \left\{\mathcal{E}(U):\ U\in H^1_0(\Omega) \text{ and } \mathcal{Q}(U)=\rho \right\}
\]
\begin{itemize}
\item[(i)] is achieved by a positive function if either $1<p<1+4/N$ or $p=1+4/N$ and
$\rho < \|\Kwong\|_{L^2(\R^N)}^{2}$
;
\item[(ii)] equals $-\infty$ if either $1+4/N<p<2^*-1$ or $p=1+4/N$ and
$\rho > \|\Kwong\|_{L^2(\R^N)}^{2}$.
\end{itemize}
\end{proposition}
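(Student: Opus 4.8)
The plan is to prove part (i) by the standard direct method and part (ii) by exhibiting explicit competitors whose energy diverges to $-\infty$. The only quantitative ingredients are the sharp Gagliardo--Nirenberg inequality \eqref{eq:gagliardo_nirenberg}, with its best constant $C_{N,p}$, and the classical identity $2C_{N,p}\|\Kwong\|_{L^2(\R^N)}^{p-1}=p+1$ (which follows from \cite{Weinstein1983}, by combining the Pohozaev identity with the equation $-\Delta\Kwong+\Kwong=\Kwong^p$). Everything else is soft; the relevant structural point is that, $\Omega$ being bounded, the embeddings $H^1_0(\Omega)\hookrightarrow L^2(\Omega)$ and, since $p+1<2^*$, $H^1_0(\Omega)\hookrightarrow L^{p+1}(\Omega)$ are compact, so that no concentration--compactness argument is needed.

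For part (i), I would first record a coercivity bound: for $U$ with $\mathcal{Q}(U)=\rho$, writing $t=\int_\Omega|\nabla U|^2\,dx$, \eqref{eq:gagliardo_nirenberg} gives
\[
\mathcal{E}(U)\ \geq\ \tfrac12\,t-\tfrac{C_{N,p}}{p+1}\,\rho^{\frac{p+1}{2}-\frac{N(p-1)}{4}}\,t^{\frac{N(p-1)}{4}} .
\]
When $1<p<1+4/N$ the exponent $N(p-1)/4$ is $<1$, so the right-hand side tends to $+\infty$ as $t\to+\infty$; when $p=1+4/N$ it equals $1$ and, by the identity above, the bound becomes $\mathcal{E}(U)\geq\tfrac12\,t\bigl(1-(\rho/\|\Kwong\|_{L^2(\R^N)}^2)^{(p-1)/2}\bigr)$, whose coefficient is positive precisely because $\rho<\|\Kwong\|_{L^2(\R^N)}^2$. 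In either case $\mathcal{E}|_{\{\mathcal{Q}=\rho\}}$ is bounded below and coercive in $H^1_0(\Omega)$. A minimizing sequence $(U_n)$ is then bounded in $H^1_0(\Omega)$; passing to a subsequence, $U_n\rightharpoonup U$ in $H^1_0(\Omega)$ and $U_n\to U$ in $L^2(\Omega)$ and $L^{p+1}(\Omega)$, so $\mathcal{Q}(U)=\rho$ (hence $U\not\equiv0$), while weak lower semicontinuity of $\int_\Omega|\nabla\cdot|^2\,dx$ gives $\mathcal{E}(U)\leq\liminf_n\mathcal{E}(U_n)=\inf$; thus $U$ is a minimizer. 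Since $\mathcal{E}(|U|)=\mathcal{E}(U)$ one may assume $U\geq0$; the Lagrange multiplier rule on the $C^1$ constraint $\{\mathcal{Q}=\rho\}$ furnishes $\lambda\in\R$ with $-\Delta U+\lambda U=U^p$, elliptic bootstrap (valid since $p+1<2^*$) gives regularity, and the strong maximum principle applied to $-\Delta U+kU=U^p+(k-\lambda)U\geq0$ with $k>\max\{0,\lambda\}$ upgrades $U\geq0$, $U\not\equiv0$ to $U>0$ in $\Omega$.

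For part (ii), I would test with concentrating bumps. Fix $x_0\in\Omega$ and $W\in C^\infty_c(\R^N)\setminus\{0\}$, and for large $\sigma$ set $W_\sigma(x)=\sigma^{N/2}W(\sigma(x-x_0))\in H^1_0(\Omega)$, so that (with norms on $\R^N$) $\|W_\sigma\|_{L^2}^2=\|W\|_{L^2}^2$, $\|\nabla W_\sigma\|_{L^2}^2=\sigma^2\|\nabla W\|_{L^2}^2$ and $\|W_\sigma\|_{L^{p+1}}^{p+1}=\sigma^{N(p-1)/2}\|W\|_{L^{p+1}}^{p+1}$. Normalising, $V_\sigma:=\sqrt{\rho}\,W_\sigma/\|W\|_{L^2}$ is admissible and
\[
\mathcal{E}(V_\sigma)=\frac{\rho\,\sigma^2\,\|\nabla W\|_{L^2}^2}{2\,\|W\|_{L^2}^2}-\frac{\rho^{(p+1)/2}\,\sigma^{N(p-1)/2}\,\|W\|_{L^{p+1}}^{p+1}}{(p+1)\,\|W\|_{L^2}^{p+1}} .
\]
If $1+4/N<p<2^*-1$ then $N(p-1)/2>2$, so for every such $W$ the negative term dominates and $\mathcal{E}(V_\sigma)\to-\infty$. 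If $p=1+4/N$ then $N(p-1)/2=2$ and $\mathcal{E}(V_\sigma)=\sigma^2\,\Lambda(W)$, where $\Lambda(W)<0$ is equivalent to $\|W\|_{L^{p+1}}^{p+1}>\tfrac{p+1}{2}\,\rho^{-(p-1)/2}\,\|\nabla W\|_{L^2}^2\,\|W\|_{L^2}^{p-1}$. Since the supremum over $W\neq0$ of $\|W\|_{L^{p+1}}^{p+1}/\bigl(\|\nabla W\|_{L^2}^2\,\|W\|_{L^2}^{p-1}\bigr)$ equals the best constant $C_{N,p}=\tfrac{p+1}{2}\,\|\Kwong\|_{L^2(\R^N)}^{-(p-1)}$, and $\rho>\|\Kwong\|_{L^2(\R^N)}^2$ forces $\tfrac{p+1}{2}\,\rho^{-(p-1)/2}<C_{N,p}$, there exists an admissible $W$ (for instance a smooth truncation of a rescaling of $\Kwong$, as in Appendix \ref{app:A}) with $\Lambda(W)<0$, whence again $\mathcal{E}(V_\sigma)\to-\infty$. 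In all cases the infimum is $-\infty$.

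The only genuinely delicate point is matching the threshold $\rho=\|\Kwong\|_{L^2(\R^N)}^2$ in the $L^2$-critical regime, which hinges on the identity $2C_{N,p}\|\Kwong\|_{L^2(\R^N)}^{p-1}=p+1$ linking the best Gagliardo--Nirenberg constant to the mass of $\Kwong$; the rest is a routine direct-method argument, made painless by the compactness of $H^1_0(\Omega)\hookrightarrow L^{p+1}(\Omega)$ on the bounded domain $\Omega$.
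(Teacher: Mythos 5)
Your proof is correct, and it takes a more self-contained route than the paper's. The paper disposes of the proposition in a few lines by rewriting the infimum as $\inf_{\alpha\ge\lambda_1(\Omega)}\frac12\alpha-\frac{\bar\mu}{p+1}M_\alpha$ with $\bar\mu=\rho^{(p-1)/2}$, and then invoking the two facts in \eqref{eq:ganiM} --- the strict Gagliardo--Nirenberg bound $M_\alpha<C_{N,p}\,\alpha^{N(p-1)/2}$ for boundedness below, and $M_\alpha/\alpha^{N(p-1)/2}\to C_{N,p}$ for the divergence to $-\infty$ --- together with the same Pohozaev identity $C_{N,1+4/N}=\bigl(1+\tfrac2N\bigr)\|\Kwong\|_{L^2(\R^N)}^{-4/N}$ that you use in the form $2C_{N,p}\|\Kwong\|_{L^2(\R^N)}^{p-1}=p+1$. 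Your direct-method argument for (i) and your concentrating competitors $V_\sigma$ for (ii) reprove, in effect, exactly these two facts without passing through the two-constraint problem; indeed your part (ii) construction is essentially the one Appendix \ref{app:A} uses to establish the limit in \eqref{eq:ganiM}. What the paper's route buys is brevity, since the machinery of Section \ref{section:two_constraints} is already in place and attainment in (i) is inherited from the attainment of $M_\alpha$; what yours buys is independence from that machinery and an explicit treatment of attainment (compactness of the embeddings on the bounded domain, passage to $|U|$, the Lagrange multiplier, and the maximum principle for positivity), which the paper leaves implicit. All your computations --- the coercivity exponent $N(p-1)/4$, the critical-mass threshold, and the scalings of $W_\sigma$ --- check out.
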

\begin{proof}
As usual, writing $u = \rho^{-1/2}U$ and $\bar\mu=\rho^{(p-1)/2}$, we have that the above minimization problem is equivalent to
\[
\inf \left\{J_{\bar\mu,0}(u):\ u\in H^1_0(\Omega) \text{ and } \|u\|_{L^2(\Omega)}=1 \right\},
\]
where $J_{\mu,\l}$ is defined in \eqref{eq:Jmulambda}. In turn, this problem can be written as
\[
\inf_{\alpha\geq\lambda_1(\Omega)} \frac12\a - \frac{\bar\mu}{p+1}{M_\a}.
\]
The lemma follows from equation \eqref{eq:ganiM}, recalling that, when $p=1+4/N$,
\[
C_{N,p} = \left(1+\frac2N\right)\left(\int_{\R^N} \Kwong^2\,dx\right)^{-2/N}
\]
by Pohozaev identity.
\end{proof}

\section{Proof of Remark \ref{rem:fisitu}}\label{app:B}

Given $k>N$, consider $X=\{w\in W^{2,k}(\Omega):\ w=0 \text{ on } \partial \Omega\}$, $Y=L^k(\Omega)$ and $U=\{w\in X:\ w>0 \text{ in } \Omega \text{ and } \partial_\nu w<0 \text{ on } \partial \Omega\}$. The continuous embedding $W^{2,k}(\Omega)\hookrightarrow C^{1,\gamma}(\overline\Omega)$ implies that $U$ is an open subset of $X$. We want to show that the map $\Phi:X\times \R^2\to Y\times \R^2$, defined in Section \ref{section:Ambrosetti-Prodi} by
$$
\Phi(u,\mu,\lambda)=\left(\Delta u-\lambda u+\mu u^p,\int_\Omega u^2\, dx-1,\int_\Omega |\nabla u|^2\, dx\right),
$$
is of class $C^2$ for every $1<p<2^\ast-1$. For $p\geq 2$ this is clear, and the aim of this appendix is to give a proof the regularity for $1<p<2$. We will use the ideas of the proof of \cite[Lemma 4.1]{OrtegaVerzini2004}.
\begin{lemma}\label{lemma:auxiliary_for_p<2}
\begin{itemize}
\item[(i)] There exists $k_1>0$ such that
$$
|u|\leq k_1 \|u\|_X \varphi_1\qquad \forall u\in X.
$$
\item[(ii)] Given $u\in U$ there exists $\eps,k_2>0$ such that
$$
\|v-u\|_X\leq \eps \Rightarrow v\geq k_2\varphi_1.
$$
\end{itemize}
\end{lemma}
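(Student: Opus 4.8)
The plan is to derive both statements from the elementary fact that a function in $X$ — which vanishes on $\partial\Omega$ and admits a $C^1$ bound controlled by $\|\cdot\|_X$ — is pointwise comparable to the distance $d(\cdot,\partial\Omega)$ to the boundary, hence comparable to $\varphi_1$. So I would first record two ingredients. (a) The embedding $W^{2,k}(\Omega)\hookrightarrow C^{1,\gamma}(\overline\Omega)$ gives a constant $C>0$ with $\|w\|_{C^1(\overline\Omega)}\le C\|w\|_X$ for every $w\in X$. (b) There exist $0<c_1\le c_2$ such that $c_1\,d(x,\partial\Omega)\le\varphi_1(x)\le c_2\,d(x,\partial\Omega)$ for all $x\in\Omega$: the upper bound holds because $\varphi_1\in C^1(\overline\Omega)$ vanishes on $\partial\Omega$, and the lower bound follows from Hopf's lemma near $\partial\Omega$ (using that $-\partial_\nu\varphi_1$ is bounded below on the compact set $\partial\Omega$) together with $\varphi_1$ being bounded below by a positive constant on each compact subset of $\Omega$.

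For part (i), fix $u\in X$ and $x\in\Omega$. Since $u$ vanishes on $\partial\Omega$, a mean value estimate yields $|u(x)|\le\|\nabla u\|_{C^0(\overline\Omega)}\,d(x,\partial\Omega)\le C\|u\|_X\,d(x,\partial\Omega)$ by (a). Dividing by the lower bound for $\varphi_1$ in (b), we get $|u|\le (C/c_1)\|u\|_X\,\varphi_1$ on $\Omega$ (and trivially on $\partial\Omega$), so (i) holds with $k_1=C/c_1$.

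For part (ii), fix $u\in U$. Because $u>0$ in $\Omega$ and $\partial_\nu u<0$ on the compact boundary, the same Hopf-type argument applied from below to $u$ gives $a>0$ with $u(x)\ge a\,d(x,\partial\Omega)$ for all $x\in\Omega$; combined with the upper bound in (b) this yields $u\ge (a/c_2)\varphi_1=:2k_2\varphi_1$ on $\Omega$, with $k_2:=a/(2c_2)>0$. Then, for any $v\in X$ with $\|v-u\|_X\le\eps$, applying part (i) to $v-u$ gives $|v-u|\le k_1\eps\,\varphi_1$, whence $v\ge u-k_1\eps\,\varphi_1\ge(2k_2-k_1\eps)\varphi_1$. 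Choosing $\eps:=k_2/k_1$ we obtain $v\ge k_2\varphi_1$, which proves (ii).

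The only genuinely delicate point is the uniform two-sided comparison between $\varphi_1$ (resp. a given $u\in U$) and the distance function $d(\cdot,\partial\Omega)$: making the constants uniform over all of $\Omega$ requires enough regularity of $\partial\Omega$ for Hopf's lemma and for the distance function to be controlled near the boundary — regularity that is in any case implicitly assumed throughout the Ambrosetti–Prodi framework of Section~\ref{section:Ambrosetti-Prodi} (e.g. in the very definition of $U$ via $\partial_\nu w<0$). Everything else is a routine combination of the Sobolev embedding with these elementary bounds; no compactness or variational argument is needed.
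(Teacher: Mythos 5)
Your proof is correct and rests on the same ingredients as the paper's: the embedding $W^{2,k}(\Omega)\hookrightarrow C^{1,\gamma}(\overline\Omega)$ and the comparability of $\varphi_1$ (respectively of any $u\in U$) with $d(\cdot,\partial\Omega)$, coming from the strict sign of the normal derivative on the compact boundary. The only presentational difference is that you prove (i) directly via the two-sided bound $c_1\,d\le\varphi_1\le c_2\,d$ where the paper runs a contradiction argument on a normalized sequence $u_n$, and you spell out (ii) --- which the paper dismisses as a direct consequence of the embedding --- by combining $u\ge 2k_2\varphi_1$ with part (i) applied to $v-u$.
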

\begin{proof}
(i) If the conclusion does not hold, then there exists $u_n\in X$ with $\|u_n\|_X=1$ and $x_n\in \Omega$ with $x_n\to x^\ast$ such that $|u_n(x_n)|>n\varphi_1(x_n)$. Dividing both sides of the previous inequality by $\textrm{d}(x_n,\partial \Omega)=|x_n-x_n^\ast|$, we get
$$
|-\partial_\nu u_n(x_n^\ast)|\geq \frac{n}{2}(-\partial_\nu \varphi_1(x_n^\ast))\to +\infty \qquad \text{ as }n\to \infty,
$$
which contradicts the uniform bound $\|u_n\|_{C^{1,\gamma}(\overline \Omega)}\leq C \|u_n\|_X=C$.

(ii) This is a direct consequence of the continuous embedding $W^{2,k}(\Omega)\hookrightarrow C^{1,\gamma}(\overline\Omega)$.
\end{proof}

We are now ready to prove the following result, which implies that $\Phi$ is indeed $C^2$.
\begin{lemma}
Let $1<p<2$. Then the map $N:U\to Y$ defined by $N(u)=u^{p-1}$ is of class $C^1$, with
$$
N'(u)\psi=(p-1)u^{p-2}\psi \qquad \forall u\in U,\ \psi\in X.
$$
\end{lemma}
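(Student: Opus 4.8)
The statement to prove is that the Nemytskii operator $N\colon U\to Y=L^k(\Omega)$, $N(u)=u^{p-1}$, is of class $C^1$ with $N'(u)\psi=(p-1)u^{p-2}\psi$, in the range $1<p<2$. The point is that the exponent $p-1\in(0,1)$ makes $t\mapsto t^{p-1}$ non-Lipschitz near $0$, so the usual Nemytskii regularity theorems (which need the nonlinearity to be $C^1$ on all of $\R$ with suitable growth) do not apply directly; what saves us is that $U$ consists of functions which are bounded below by a positive multiple of $\varphi_1$ near the boundary and behave like $\varphi_1$, and that the target is $L^k$ rather than $C^0$. The plan is: first, verify that the candidate derivative $\psi\mapsto (p-1)u^{p-2}\psi$ is a well-defined bounded linear map $X\to Y$ for each $u\in U$, using Lemma \ref{lemma:auxiliary_for_p<2}(i) to control $|\psi|$ by $\|\psi\|_X\varphi_1$ and the fact that $u^{p-2}\varphi_1$ is bounded near $\partial\Omega$ (since $u\sim\varphi_1\sim \mathrm{d}(x,\partial\Omega)$ there, so $u^{p-2}\varphi_1\sim \mathrm{d}^{\,p-1}$, which is bounded). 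Second, prove Fréchet differentiability at a fixed $u\in U$: for $v=u+\psi$ with $\|\psi\|_X$ small, write the remainder
\[
R(\psi)=v^{p-1}-u^{p-1}-(p-1)u^{p-2}\psi
\]
and estimate $\|R(\psi)\|_{L^k}=o(\|\psi\|_X)$. Third, prove continuity of $u\mapsto N'(u)$ in operator norm.

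**Key steps in order.** For the differentiability estimate I would fix $u\in U$ and, by Lemma \ref{lemma:auxiliary_for_p<2}(ii), choose $\eps,k_2>0$ so that $\|v-u\|_X\le\eps$ forces $v\ge k_2\varphi_1$; then on the whole segment $[u,v]$ all functions are $\ge k_2\varphi_1$, hence strictly positive, so $t\mapsto t^{p-1}$ is $C^2$ along the segment and Taylor's theorem with integral remainder gives, pointwise,
\[
R(\psi)(x)=(p-1)(p-2)\left(\int_0^1(1-s)\,(u+s\psi)^{p-3}\,ds\right)\psi(x)^2 .
\]
Now bound $(u+s\psi)^{p-3}\le (k_2\varphi_1)^{p-3}$ (since $p-3<0$ and $u+s\psi\ge k_2\varphi_1$) and $|\psi|\le k_1\|\psi\|_X\varphi_1$ from Lemma \ref{lemma:auxiliary_for_p<2}(i), so that
\[
|R(\psi)(x)|\le C\,\|\psi\|_X^2\,\varphi_1(x)^{p-1}.
\]
Since $\varphi_1^{p-1}\in C(\overline\Omega)\subset L^k(\Omega)$ (here I use $p-1>0$ so $\varphi_1^{p-1}$ is continuous up to the boundary, where $\varphi_1$ vanishes), this yields $\|R(\psi)\|_{L^k}\le C'\|\psi\|_X^2=o(\|\psi\|_X)$, which is Fréchet differentiability with the claimed derivative. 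For the continuity of $N'$, given $u,v\in U$ close, I estimate $\|(N'(u)-N'(v))\psi\|_{L^k}$ using $|u^{p-2}-v^{p-2}|$: on the relevant neighborhood both are $\ge k_2\varphi_1$, and $t\mapsto t^{p-2}$ is Lipschitz on $[k_2\varphi_1(x),\infty)$ with constant comparable to $(k_2\varphi_1(x))^{p-3}$, so $|u^{p-2}-v^{p-2}|\le C(k_2\varphi_1)^{p-3}|u-v|\le C(k_2\varphi_1)^{p-3}\|u-v\|_X\varphi_1 = C'\varphi_1^{p-2}\|u-v\|_X$; multiplying by $|\psi|\le k_1\|\psi\|_X\varphi_1$ gives a pointwise bound $C''\varphi_1^{p-1}\|u-v\|_X\|\psi\|_X$, hence $\|N'(u)-N'(v)\|_{\mathcal L(X,Y)}\le C''\|\varphi_1^{p-1}\|_{L^k}\|u-v\|_X\to 0$.

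**Main obstacle.** The only genuinely delicate point is the interplay between the singularity of $t^{p-1}$ (and worse, $t^{p-3}$) at $t=0$ and the boundary behavior: one must be sure that along every segment $[u,v]$ with $v$ in a fixed small $X$-ball around $u$, the functions stay uniformly bounded below by a positive multiple of $\varphi_1$, which is exactly what Lemma \ref{lemma:auxiliary_for_p<2}(ii) provides, and that the resulting negative powers of $\varphi_1$ are tamed by the positive power of $\varphi_1$ coming from $|\psi|\le k_1\|\psi\|_X\varphi_1$. Tracking the exponents, the worst term $\varphi_1^{p-3}\cdot\varphi_1^2=\varphi_1^{p-1}$ is harmless precisely because $p>1$; this is where the hypothesis $1<p<2$ (rather than, say, $0<p\le 1$) is used, and it is the heart of why the argument works. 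Everything else — well-definedness of $N'(u)$, the Taylor expansion, passing $o(\cdot)$ estimates through the $L^k$-norm — is routine once these bounds are in place. I would also remark that, once $N$ is $C^1$, writing $u^p=u\cdot N(u)$ and using that multiplication $X\times Y\to Y$ is bounded bilinear shows $u\mapsto u^p$ is $C^1$ with the expected derivative; differentiating once more (the map $u\mapsto (p-1)u^{p-2}$ is, by the same argument with $p-1$ in place of $p$ shifted down, continuous into $\mathcal L$-valued multipliers, but one checks it is $C^0$ which together with $u\mapsto u^{p-1}\in C^1$ suffices) gives that $u\mapsto u^p\in C^2$, hence $\Phi\in C^2(U,Y)$, completing the justification of Remark \ref{rem:fisitu}.
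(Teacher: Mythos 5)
Your proof is correct, and it reaches the conclusion by a genuinely different (though closely related) route than the paper's. Both arguments rest on exactly the same two pillars -- the bounds $|\psi|\leq k_1\|\psi\|_X\varphi_1$ and $u+s\psi\geq k_2\varphi_1$ from Lemma \ref{lemma:auxiliary_for_p<2}, and the exponent bookkeeping whereby a negative power of $\varphi_1$ is absorbed into a positive one, ending at the bounded function $\varphi_1^{p-1}$ -- but they deploy them differently. The paper establishes only the G\^ateaux derivative (via the mean value theorem, a pointwise limit, and dominated convergence with dominating function $C|u^{p-2}\psi|\leq C'\varphi_1^{p-1}\|\psi\|_X$) together with continuity of $u\mapsto L(u)$ in $\mathcal{L}(X,Y)$ (again by dominated convergence), and then relies on the standard fact that a continuous G\^ateaux derivative yields continuous Fr\'echet differentiability. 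You instead prove Fr\'echet differentiability directly through a second-order Taylor expansion along the segment $[u,u+\psi]$, which costs you the more singular power $(u+s\psi)^{p-3}$ but is still tamed since $\varphi_1^{p-3}\cdot\varphi_1^{2}=\varphi_1^{p-1}$; in exchange you obtain the quantitatively stronger remainder bound $\|R(\psi)\|_{L^k}\leq C\|\psi\|_X^2$ and a local Lipschitz estimate for $N'$, rather than mere continuity. Your identification of $p>1$ as the reason $\varphi_1^{p-1}$ is harmless is exactly the heart of the matter in both proofs. One minor caveat: your closing aside about upgrading to $\Phi\in C^2$ is sketchier than the rest -- for the second derivative one needs $u\mapsto u^{p-2}$ to be continuous as a map into bilinear multipliers $X\times X\to Y$, which is the same kind of estimate but is not literally contained in the $C^1$ statement for $N$; this does not affect the lemma itself, which you have proved in full.
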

\begin{proof}
1. Given $u\in U$, define $L(u):X\to Y$ by $L(u)\psi=(p-1)u^{p-2}\psi$. We claim that $L(u)\in \Lcal(X,Y)$ and that the map
$$
U\to \Lcal(X,Y),\qquad u\mapsto L(u)
$$
is continuous. The first statement is a consequence of Lemma \ref{lemma:auxiliary_for_p<2}-(i):
$$
|L(u)\psi|=(p-1)u^{p-2}|\psi|\leq c_1\varphi_1^{p-2}\|\psi\|_X \varphi_1=c_1\varphi_1^{p-1}\|\psi\|_X\leq c_2\|\psi\|_X.
$$

As for the continuity, take $u_n,u\in U$ such that $u_n\to u$ in $X$. We have
$$
u_n^{p-2}(x)\to u^{p-2}(x)\qquad \text{ for every } x\in \Omega,
$$
and, by Lemma \ref{lemma:auxiliary_for_p<2}-(ii),
$$
|(u_n^{p-2}-u^{p-2})|\varphi_1\leq 2 k_2^{p-2}\varphi_1^{p-2}\varphi_1\leq C.
$$
for $n$ sufficiently large. Hence
$$
\int_\Omega |(u_n^{p-2}-u^{p-2})|^p\varphi_1^p\, dx\to 0 \qquad \text{ as } n\to \infty
$$
by the Lebesgue's dominated convergence theorem, and
\[
\begin{split}
\|\Lcal(u_n)-\Lcal(u)\|_{\Lcal(X,Y)}^p&=\sup_{\|\psi\|_X\leq 1} \| \Lcal(u_n)\psi-\Lcal(u)\psi \|^p_{Y}\\
							&=\sup_{\|\psi\|_X\leq 1} \int_\Omega  |(u_n^{p-2}-u^{p-2})|^p|\psi|^p\, dx\\
							&\leq k_1^p \int_\Omega |(u_n^{p-2}-u^{p-2})|^p\varphi_1^p\, dx\to 0.
\end{split}
\]

2. Finally, let us prove that
\[
\left\|\frac{1}{\eps}((u+\eps \psi)^{p-1}-u^{p-1})-(p-2)u^{p-2}\psi\right\|_Y\to 0
\]
First of all, it is clear that, as $\eps\to 0$,
\[
\frac{1}{\eps}((u+\eps \psi)^{p-1}-u^{p-1})\to (p-2)u^{p-2}\psi \quad \text{pointwise in } \Omega.
\]
(since $u>0$ in $\Omega$). From now on take $\eps$ small so that $u+\eps\psi\geq u/2$ in $\Omega$. Then for each $x\in \Omega$ fixed, from the mean value theorem there exists $|\eps_x|<|\eps|$ so that
\[
\frac{1}{\eps}((u+\eps \psi)^{p-1}-u^{p-1})=(p-1)|(u+\eps_x\psi)^{p-2}\psi|\leq 2^{2-p}(p-1)|u^{p-2}\psi|
\]
and the conclusion follows once again by the Lebesgue's dominated convergence theorem.
\end{proof}
%

\section{The defocusing case $\mu<0$}\label{app:C}

In such case it is not necessary to restrict to spherical domains, therefore in this appendix we
consider a generic smooth, bounded domain $\Omega$. As in Section \ref{section:FocusingRadial} we
work in the space $X=\{w\in W^{2,k}(\Omega):\ w=0 \text{ on } \partial \Omega\}$, for some $k>N$,
and with the map $F:X\times \R^2\to L^k(\Omega)\times \R^2$ defined by
$$
F(u,\mu,\lambda,\alpha)=\left(\Delta u-\lambda u+\mu u^p,\int_\Omega u^2\, dx-1, \int_\Omega |\nabla u|^2-\alpha\right).
$$
We aim at providing a full description of the set
\[
\mathcal{S}^-=\left\{(u,\mu,\lambda,\alpha)\in X\times \R^3: u>0,\ \mu<0,\ F(u,\mu,\lambda,\alpha)=(0,0,0)\right\},
\]
thus concluding the proof of Theorem \ref{thm:main_final}.
\begin{lemma}\label{lemma:injective_case_mu<0}
Let $(u,\mu,\lambda,\alpha) \in \mathcal{S}^-$. Then the linear bounded operator
\[
F_{(u,\mu,\lambda)}(u,\mu,\lambda,\alpha):X\times\R^2 \to L^k(B_1)\times\R^2
\]
is invertible.
\end{lemma}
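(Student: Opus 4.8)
The plan is to follow the scheme of Lemma~\ref{lem:nondegeneracy_in_the_ball}: by the Fredholm Alternative and the Closed Graph Theorem it suffices to prove that the operator $F_{(u,\mu,\lambda)}(u,\mu,\lambda,\alpha)$ is injective. Assume then, for contradiction, that some $(v,m,l)\neq(0,0,0)$ satisfies $F_{(u,\mu,\lambda)}(u,\mu,\lambda,\alpha)[v,m,l]=(0,0,0)$, that is
\[
-\Delta v+\lambda v+lu=p\mu u^{p-1}v+mu^p,\qquad \int_\Omega uv\,dx=0,\qquad \int_\Omega\nabla u\cdot\nabla v\,dx=0,
\]
together with the equation $-\Delta u+\lambda u=\mu u^p$ for $u$ itself. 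Testing this last equation with $v$ and using the two orthogonality relations (and $\mu\neq0$) gives $\int_\Omega u^p v\,dx=0$; testing the linearized equation with $v$ and using $\int_\Omega uv\,dx=\int_\Omega u^p v\,dx=0$ then yields exactly
\[
\int_\Omega\bigl(|\nabla v|^2+\lambda v^2-p\mu u^{p-1}v^2\bigr)\,dx=0,
\]
i.e. $Q(v)=0$, where $Q$ denotes the quadratic form on $H^1_0(\Omega)$ of the linearized operator $L:=-\Delta+\lambda-p\mu u^{p-1}$. So far the computation is parallel to the focusing case of Lemma~\ref{lem:nondegeneracy_in_the_ball}.

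The crucial point — and the only place where the signs $\mu<0$ and $p>1$ enter — is that here $L$ is \emph{positive definite}. Indeed, $u>0$ belongs to the kernel of the auxiliary operator $\tilde L:=-\Delta+\lambda-\mu u^{p-1}$, simply because $\tilde L u=0$ is the equation for $u$; being a positive $H^1_0$-eigenfunction, $u$ is a principal eigenfunction, so the first Dirichlet eigenvalue of $\tilde L$ vanishes. Now $L=\tilde L-(p-1)\mu u^{p-1}$ with $-(p-1)\mu u^{p-1}>0$ in $\Omega$, and adding a strictly positive potential strictly increases the principal Dirichlet eigenvalue (it strictly increases the Rayleigh quotient of the principal eigenfunction, which is positive in $\Omega$). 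Hence the first eigenvalue of $L$ is strictly positive, so $Q(w)\geq c\int_\Omega w^2\,dx$ for all $w\in H^1_0(\Omega)$ and some $c>0$.

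Combining this with $Q(v)=0$ forces $v\equiv0$. The linearized equation then reduces to $lu=mu^p$ in $\Omega$, i.e. $l=mu^{p-1}$ pointwise; since $u$ is non-constant this gives $m=l=0$, contradicting $(v,m,l)\neq(0,0,0)$. The only routine parts are the two integrations by parts producing the identities above, and I do not expect a genuine obstacle here: in sharp contrast with the focusing case, where one must exploit the delicate fact that $u$ has Morse index one (Remark~\ref{rem:morse_index_in_B}), in the defocusing regime the sign of $\mu$ makes the linearization coercive outright, so injectivity is immediate.
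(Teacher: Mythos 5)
Your proof is correct, and it rests on exactly the same mechanism as the paper's: the only place the hypotheses $\mu<0$, $p>1$ enter is in showing that the quadratic form of $L=-\Delta+\lambda-p\mu u^{p-1}$ is positive definite, which kills $v$, after which $lu=mu^p$ forces $m=l=0$. The difference is purely one of packaging. You invoke the abstract spectral fact that a positive Dirichlet eigenfunction of $\tilde L=-\Delta+\lambda-\mu u^{p-1}$ must be the principal one (so $\lambda_1(\tilde L)=0$) and then perturb by the positive potential $-(p-1)\mu u^{p-1}$ to get $\lambda_1(L)>0$. The paper instead carries out the concrete computation underlying that spectral fact: it tests the equation for $u$ with $v^2/u$ (a Picone-type ground-state substitution, legitimate because $\partial_\nu u<0$ on $\partial\Omega$ guarantees $v^2/u\in H^1_0(\Omega)$) and arrives in one inequality chain at $-(p-1)\mu\int_\Omega u^{p-1}v^2\,dx\le 0$, hence $v\equiv 0$. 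Your version is slightly more modular and makes transparent why the defocusing case is so much easier than the focusing one; the paper's version is self-contained and avoids citing the eigenvalue comparison. Both are complete.
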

\begin{proof}
As in the proof of Lemma \ref{lem:nondegeneracy_in_the_ball}, it is sufficient to prove
injectivity.

As in that proof, we assume the existence of a nontrivial $(v,m,l)$ such that equations
\eqref{eq:cond_nondeg}, \eqref{eq:cond_nondeg2} hold. Since $\partial_\nu u<0$ on $\partial\Omega$,
we can test the equation for $u$ by $v^2/u \in H^1_0(\Omega)$, obtaining
\[
\begin{split}
\int_\Omega \left(\m u^{p-1}v^2 -\l v^2\right)\,dx
& =\int_\Omega \nabla u\cdot\nabla\left(\frac{v^2}{u}\right)\,dx
 =\int_\Omega \nabla u\cdot\left(2\frac{v}{u}\nabla v-\frac{v^2}{u^2}\nabla u\right)\,dx \\
& =-\int_\Omega\left|\frac{v}{u}\nabla u-\nabla v\right|^2\,dx+\int_\Omega|\nabla v|^2\,dx \\
& \leq \int_\Omega\left(p\m u^{p-1}v^2+mu^pv-luv-\l v^2\right)\,dx \\
& = \int_\Omega\left(p\m u^{p-1}v^2-\l v^2\right)\,dx.
\end{split}
\]
Therefore, being $\m<0$ and $p>1$, we must have $v\equiv0$. Finally, by testing the equation for $v$ by $u$, we deduce that $l=m\int_\Omega u^{p+1}\,dx$ and it is easy to conclude.
\end{proof}
\begin{proposition}\label{prop:defocusing_regular_curve}
$\mathcal{S}^-$ is a smooth curve, and it can be parameterized by a unique map
\[
\alpha\mapsto (u(\alpha),\mu(\a),\l(\a)), \qquad \alpha\in (\lambda_1({B_{1}}),+\infty).
\]
In particular, $u(\alpha)$ is the unique minimizer associated to $m_\alpha$ (as defined in
\eqref{eq:M_intro}).
Furthermore, $\mu'(\alpha)<0$ and $\lambda'(\alpha)<0$ for every $\alpha$.
\end{proposition}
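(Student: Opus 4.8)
The plan is to transcribe the argument of Proposition \ref{prop:focusing_radial_regular_curve}, using Lemma \ref{lemma:injective_case_mu<0} in place of Lemma \ref{lem:nondegeneracy_in_the_ball}, and then to obtain the monotonicity of $\mu$ and $\lambda$ from a \emph{positivity} property of $J_{\mu,\lambda}''(u)$ (in contrast with the Morse--index information used in the focusing case). For the construction of the curve: by Lemma \ref{lemma:m_M_attained} and Proposition \ref{prop:sign_of_mu_lambda}, for each $\alpha>\lambda_1(\Omega)$ there is at least one point of $\mathcal{S}^-$ above $\alpha$, and any minimizer of $m_\alpha$ produces one. Fixing such a point, Lemma \ref{lemma:injective_case_mu<0} makes $F_{(u,\mu,\lambda)}$ invertible there, so the Implicit Function Theorem continues it to a $C^\infty$ arc $\alpha\mapsto(u(\alpha),\mu(\alpha),\lambda(\alpha))$ defined on a maximal interval $(\underline\alpha,\overline\alpha)$ on which $\mu<0$ and (by the maximum principle and Hopf's lemma, in the $W^{2,k}$ topology) $u>0$; thus the arc lies in $\mathcal{S}^-$. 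If $\underline\alpha>\lambda_1(\Omega)$, then for $\alpha_n\to\underline\alpha^+$ Lemma \ref{lemma:case_alpha_n_bounded} gives that $\mu_n,\lambda_n$ stay bounded, and, arguing as in Proposition \ref{prop:focusing_radial_regular_curve}, $u_n\to\bar u$ strongly in $H^2(\Omega)$ along a subsequence, with $-\Delta\bar u+\bar\lambda\bar u=\bar\mu\bar u^p$, $\int_\Omega\bar u^2\,dx=1$ and $\int_\Omega|\nabla\bar u|^2\,dx=\underline\alpha>\lambda_1(\Omega)$; since $\bar\mu=0$ would force $\bar u=\varphi_1$ and hence $\int_\Omega|\nabla\bar u|^2\,dx=\lambda_1(\Omega)$, we get $\bar\mu<0$, so $(\bar u,\bar\mu,\bar\lambda,\underline\alpha)\in\mathcal{S}^-$ and Lemma \ref{lemma:injective_case_mu<0} lets us continue the arc past $\underline\alpha$, a contradiction. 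The same reasoning rules out $\overline\alpha<+\infty$. Hence $\mathcal{S}^-$ is a disjoint union of smooth arcs, each a graph over $\alpha\in(\lambda_1(\Omega),+\infty)$.

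\textbf{Uniqueness and identification with $m_\alpha$.} If two distinct such arcs existed, then, taking $\alpha_n\to\lambda_1(\Omega)^+$, Lemma \ref{lemma:alpha_to_lambda_1} would force both corresponding triplets to converge to $(\varphi_1,0,-\lambda_1(\Omega))$, contradicting the fact — from Proposition \ref{prop:neighborhood of lambda_1} — that in a neighbourhood of this point there is, for each small $\varepsilon$, exactly one solution with $\mu<0$. Thus $\mathcal{S}^-$ is a single graph over $\alpha$; since every minimizer of $m_\alpha$ lies in $\mathcal{S}^-$ with that value of $\alpha$, $m_\alpha$ is achieved by the unique function $u(\alpha)$.

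\textbf{Monotonicity.} The crucial point is that, for $(u,\mu,\lambda)\in\mathcal{S}^-$, the operator $J_{\mu,\lambda}''(u)$ is positive definite. Indeed, writing $\mu=-|\mu|$, the positive function $u$ solves $-\Delta u+(\lambda+|\mu|u^{p-1})u=0$, so the ground--state substitution $h=u\phi$ used in the proof of Lemma \ref{lemma:injective_case_mu<0} yields
\[
\int_\Omega|\nabla h|^2\,dx+\int_\Omega(\lambda+|\mu|u^{p-1})h^2\,dx=\int_\Omega u^2|\nabla\phi|^2\,dx\geq 0,
\]
whence, for every $h\in H^1_0(\Omega)\setminus\{0\}$,
\[
J_{\mu,\lambda}''(u)[h,h]=\int_\Omega u^2|\nabla\phi|^2\,dx+(p-1)|\mu|\int_\Omega u^{p-1}h^2\,dx>0.
\]
The identities of Corollary \ref{coro:derivate} hold, with the same proof, for the parameterization of $\mathcal{S}^-$; so, setting $v=\frac{d}{d\alpha}u$ and considering the quadratic form $J_{\mu,\lambda}''(u)[hu+kv,hu+kv]=ah^2+2bhk+ck^2$ with (exactly as in Lemma \ref{lemma:lambda'<0})
\[
a=-(p-1)\mu\int_\Omega u^{p+1}\,dx>0,\qquad b=-\tfrac{p-1}{2},\qquad c=\tfrac{\mu'}{2\mu},
\]
positive definiteness of $J_{\mu,\lambda}''(u)$ on $\mathrm{span}\{u,v\}$ — which is two--dimensional, since $v=cu$ would give $c=\int_\Omega uv\,dx=0$ while $v\neq 0$ because $\int_\Omega\nabla u\cdot\nabla v\,dx=\tfrac12$ — forces $ac-b^2>0$, i.e. $\mu'\int_\Omega u^{p+1}\,dx<-\tfrac{p-1}{2}$. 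In particular $\mu'(\alpha)<0$, and then $\lambda'(\alpha)=\tfrac{p-1}{2}+\mu'\int_\Omega u^{p+1}\,dx<0$ by the second identity in \eqref{eq:mu_u^p_v}.

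I expect the positivity/monotonicity step to be the only one requiring a genuinely new idea. The construction and uniqueness of the curve is a direct transcription of Proposition \ref{prop:focusing_radial_regular_curve}, the only mild subtlety being to check that the limit point does not collapse onto the branch $\{\mu=0\}$; by contrast, the sign of $J_{\mu,\lambda}''(u)$, although it follows quickly from the same ground--state substitution as in Lemma \ref{lemma:injective_case_mu<0}, is precisely what reverses the inequality $b^2-ac>0$ of Lemma \ref{lemma:lambda'<0} into $ac-b^2>0$ and thereby delivers both $\mu'<0$ and $\lambda'<0$.
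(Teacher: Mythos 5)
Your proposal is correct and follows the paper's proof almost step for step: the construction and uniqueness of the curve is the same transcription of Proposition \ref{prop:focusing_radial_regular_curve} via Lemma \ref{lemma:injective_case_mu<0}, and the monotonicity is extracted from the same quadratic form $ah^2+2bhk+ck^2$ on $\mathrm{span}\{u,v\}$ with the discriminant inequality reversed. The only point where you diverge is in how the positive definiteness of $J''_{\mu,\lambda}(u)$ is obtained. The paper observes that, for $\mu<0$, the functional $J_{\mu,\lambda}$ is coercive and bounded below, so $u$ is its global minimizer and $J''_{\mu,\lambda}(u)\geq 0$; nondegeneracy (Lemma \ref{lemma:injective_case_mu<0}) then upgrades this to strict positivity. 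You instead prove strict positivity directly by the ground-state substitution $h=u\phi$ (the same device, testing the equation for $u$ against $h^2/u$, that the paper uses inside the proof of Lemma \ref{lemma:injective_case_mu<0}), which yields $J''_{\mu,\lambda}(u)[h,h]\geq (p-1)|\mu|\int_\Omega u^{p-1}h^2\,dx>0$ at once. Your variant is slightly more self-contained, since it does not require first establishing that $u$ is the global minimizer of the unconstrained functional; the paper's variant reuses the nondegeneracy already proved. Both are sound, and both lead identically to $c>0$, $ac-b^2>0$, hence $\mu'\int_\Omega u^{p+1}\,dx<-\tfrac{p-1}{2}$ and, via \eqref{eq:mu_u^p_v}, $\lambda'<0$.
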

\begin{proof}
One can use Lemma \ref{lemma:injective_case_mu<0} and reason as in the proof of Proposition \ref{prop:focusing_radial_regular_curve} in order to prove that $\mathcal{S}^-$ consists in a unique, smooth curve parameterized by $\alpha\in (\lambda_1({B_{1}}),+\infty)$, so that $u(\a)$ must achieve $m_\a$. Moreover,
all the relation contained in Corollary \ref{coro:derivate} are true also in this case.

In order to show the monotonicity of $\mu$ and $\l$, we remark that one can also prove, in a standard way, that
$u$ is the global minimizer of the related functional $J_{\mu,\lambda}$, which is bounded below and coercive since $\mu<0$.
Since $u$ is non-degenerate (by virtue of Lemma \ref{lemma:injective_case_mu<0}), we obtain that $J''_{\mu,\l}(u)[w,w]>0$ for every nontrivial $w$. But then one can reason as in the proof of Lemma \ref{lemma:lambda'<0}: using the corresponding notations, we have that in this case both $c>0$ and $b^2-ac<0$. This,
together with equation \eqref{eq:mu_u^p_v}, concludes the proof.
\end{proof}
\begin{remark}
By the above results, it is clear that $\mathcal{S}^-$ may be parameterized also with respect to $\lambda$ (or $\mu$).
Under this perspective, uniqueness and continuity for the case $p=3$ were proved in \cite{BergerFraenkel1969} (for the problem without mass constraint).
\end{remark}
We conclude by showing some asymptotic properties of $\mathcal{S}^-$ as $\alpha\to+\infty$ (the case $\alpha\to\lambda_1(\Omega)^+$ has been considered in Section \ref{section:Ambrosetti-Prodi}).
Such properties are well known in case $p=3$ since they have been studied in a different context (among others we cite \cite{BergerFraenkel1969,BethuelBrezisHelein1993,AndreShafrir1998,Serfaty2001}) and the proof can be adapted to general $p$.

\begin{proposition}
Under the notations of Proposition \ref{prop:defocusing_regular_curve}, we have that, as $\alpha\to +\infty$, $\mu\to-\infty$ and $\lambda\to -\infty$. Furthermore, if $\partial\Omega$ is smooth, then 
\[
u\to |\Omega|^{-1/2} \ \text{strongly in }L^{p+1}(\Omega), \quad \frac{\lambda}{\mu}\to |\Omega|^{-(p-1)/2} \quad \text{ and } \quad \frac{\a}{\l}\to0,
\]
as $\alpha\to+\infty$.
\end{proposition}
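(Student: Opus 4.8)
The plan is to first identify the limit of $m_\alpha$ and the shape of the minimizers as $\alpha\to+\infty$, and then to read off the asymptotics of $(\mu,\lambda)$ from two elementary test-function identities. Throughout, $u=u(\alpha)$ is the curve from Proposition~\ref{prop:defocusing_regular_curve}, and for a fixed sequence $\alpha_n\to+\infty$ we set $u_n:=u(\alpha_n)$, $\lambda_n:=\lambda(\alpha_n)$, $\mu_n:=\mu(\alpha_n)$. The first step is to show that $m_\alpha\to|\Omega|^{-(p-1)/2}$ as $\alpha\to+\infty$. Indeed $\alpha\mapsto m_\alpha$ is nonincreasing (the constraint set $\mathcal{U}_\alpha$ grows with $\alpha$) and, by Lemma~\ref{lemma:U_not_empty_and_compact}~(iv), bounded below by $|\Omega|^{-(p-1)/2}$; for the matching upper bound I would use the smoothness of $\partial\Omega$, which makes the collar $\{x\in\Omega:\mathrm{d}(x,\partial\Omega)<\eps\}$ of measure $O(\eps)$. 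Then the Lipschitz functions $\eta_\eps(x):=\min\{1,\mathrm{d}(x,\partial\Omega)/\eps\}\in H^1_0(\Omega)$ satisfy $\int_\Omega\eta_\eps^2\,dx\to|\Omega|$ and $\int_\Omega|\nabla\eta_\eps|^2\,dx=O(1/\eps)$, so normalising $w_\eps:=(\int_\Omega\eta_\eps^2\,dx)^{-1/2}\eta_\eps$ and choosing $\eps=\eps(\alpha)$ with $\eps(\alpha)^{-1}\sim\alpha$ gives $w_{\eps(\alpha)}\in\mathcal{U}_\alpha$ and $\int_\Omega w_{\eps(\alpha)}^{p+1}\,dx\to|\Omega|^{-(p-1)/2}$, whence $\limsup_{\alpha\to+\infty}m_\alpha\le|\Omega|^{-(p-1)/2}$.

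The main step is then to prove that $u_n\to c_\Omega:=|\Omega|^{-1/2}$ strongly in $L^{p+1}(\Omega)$. Since $u_n\ge0$, $\|u_n\|_{L^2(\Omega)}=1$ and $\|u_n\|_{L^{p+1}(\Omega)}^{p+1}=m_{\alpha_n}\to|\Omega|^{-(p-1)/2}$, the sequence $\{u_n\}$ is minimising for $\inf\{\|w\|_{L^{p+1}(\Omega)}^{p+1}:\ w\in L^{p+1}(\Omega),\ w\ge0,\ \|w\|_{L^2(\Omega)}=1\}$, whose value is $|\Omega|^{-(p-1)/2}$ and whose unique minimizer is the constant $c_\Omega$ (Jensen's inequality for the strictly convex $t\mapsto t^{(p+1)/2}$ with respect to $dx/|\Omega|$, equality holding iff $w$ is constant). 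Introducing the associated Jensen-defect density
\begin{equation}\label{eq:jensen_defect}
\Psi(t):=t^{(p+1)/2}-c_\Omega^{p+1}-\tfrac{p+1}{2}c_\Omega^{p-1}\big(t-c_\Omega^{2}\big)\ \ge\ 0,
\end{equation}
which vanishes only at $t=c_\Omega^{2}$ and tends to $+\infty$ as $t\to+\infty$, a direct computation exploiting $\int_\Omega u_n^2\,dx=1$ shows $\int_\Omega\Psi(u_n^2)\,dx=m_{\alpha_n}-|\Omega|^{-(p-1)/2}\to0$. Since $\Psi$ is bounded away from $0$ outside any neighbourhood of $c_\Omega^{2}$, this forces $u_n^2\to c_\Omega^{2}$, hence $u_n\to c_\Omega$, in measure; combining this with the convergence of the $L^{p+1}$-norms (first step) gives strong convergence $u_n\to c_\Omega$ in $L^{p+1}(\Omega)$, and therefore in every $L^q(\Omega)$, $q\le p+1$.

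It remains to deduce the behaviour of the multipliers. Testing $-\Delta u_n+\lambda_n u_n=\mu_n u_n^p$ against $u_n$ and against $\varphi_1$ gives
\begin{equation}\label{eq:two_id_defoc}
\alpha_n+\lambda_n=\mu_n\int_\Omega u_n^{p+1}\,dx,\qquad (\lambda_1(\Omega)+\lambda_n)\int_\Omega u_n\varphi_1\,dx=\mu_n\int_\Omega u_n^p\varphi_1\,dx .
\end{equation}
Since $\mu_n<0$ on $\mathcal{S}^-$, the first identity forces $\lambda_n<-\alpha_n\to-\infty$. Subtracting the two identities in \eqref{eq:two_id_defoc} (after dividing the second by $\int_\Omega u_n\varphi_1\,dx>0$) yields
\[
\alpha_n-\lambda_1(\Omega)=\mu_n\left(\int_\Omega u_n^{p+1}\,dx-\frac{\int_\Omega u_n^p\varphi_1\,dx}{\int_\Omega u_n\varphi_1\,dx}\right),
\]
and by the previous step both quotients tend to $c_\Omega^{p-1}=|\Omega|^{-(p-1)/2}$, so the bracket tends to $0$ while the left-hand side tends to $+\infty$: hence $\mu_n\to-\infty$. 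Then the second identity in \eqref{eq:two_id_defoc} gives $\lambda_n/\mu_n=\int_\Omega u_n^p\varphi_1\,dx\big/\int_\Omega u_n\varphi_1\,dx-\lambda_1(\Omega)/\mu_n\to|\Omega|^{-(p-1)/2}$, and the first gives $\alpha_n/\lambda_n+1=(\mu_n/\lambda_n)\int_\Omega u_n^{p+1}\,dx\to|\Omega|^{(p-1)/2}\cdot|\Omega|^{-(p-1)/2}=1$, i.e. $\alpha_n/\lambda_n\to0$; since the sequence $\{\alpha_n\}$ was arbitrary, all the stated limits follow. I expect the main obstacle to be the second step: the minimizing sequence is controlled only in $L^{p+1}\cap L^2$ — its Dirichlet energies blow up — so Sobolev compactness is unavailable and oscillation/concentration must be excluded directly, which is precisely the role of the coercive Jensen-defect density $\Psi$ in \eqref{eq:jensen_defect}; this argument works uniformly in $p$, in particular in the range $p<3$ where direct $L^{p+1}$-norm manipulations are less comfortable. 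The first step is where smoothness of $\partial\Omega$ is genuinely used, and the last step is routine algebra with \eqref{eq:two_id_defoc}.
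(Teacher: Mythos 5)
Your proof is correct and reaches all the stated limits, but it follows a genuinely different route from the paper's in three places, and the comparison is instructive. First, to show $\int_\Omega u^{p+1}\,dx\to|\Omega|^{-(p-1)/2}$ you work directly with the two--constraint value $m_\alpha$, squeezing it between the lower bound of Lemma \ref{lemma:U_not_empty_and_compact}~(iv) and a boundary--layer competitor whose width is tuned to the gradient constraint ($\eps\sim 1/\alpha$); the paper instead exploits that $u$ is the \emph{global} minimizer of $J_{\mu,0}$ on the $L^2$--sphere and takes a competitor of width $(-\mu)^{-1/2}$, which has the advantage of producing the quantitative rate $\mathrm{O}((-\mu)^{-1/2})$ that the paper later reuses. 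Second, for the upgrade to strong $L^{p+1}$ convergence the paper rewrites $J_{\mu,0}$ around the constant as a square plus a cross term, and must control that cross term by H\"older and interpolation with a case split $p\geq 3$ versus $p<3$; your Jensen--defect density $\Psi(u^2)$, whose integral equals exactly $m_\alpha-|\Omega|^{-(p-1)/2}$ because the $L^2$--constraint annihilates the linear term, gives convergence in measure uniformly in $p$ and is then upgraded via the Brezis--Lieb/Riesz argument. This is cleaner than the paper's treatment and removes the case distinction. Third, for the multipliers the paper obtains $\mu\to-\infty$ abstractly from $\mu'(\alpha)<0$ together with the attainability of every $\mu<0$ on the curve $\mathcal{S}^-$, and derives $\lambda/\mu\to|\Omega|^{-(p-1)/2}$ from the two--sided bounds $\lambda\leq\mu|\Omega|^{-(p-1)/2}$ and $-\lambda\leq(p+1)J_{\mu,0}(\varphi_\mu)$, where the rate from the first step is essential; you instead test the equation against $\varphi_1$ and subtract from the $u$--tested identity, which extracts $\mu\to-\infty$, $\lambda/\mu$ and $\alpha/\lambda$ from the strong convergence alone, with no quantitative estimates needed. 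The only (cosmetic) price of your route is that the elementary claim $\mu\to-\infty$, which the statement asserts before the smoothness hypothesis is invoked, becomes dependent on the boundary--layer construction and hence on the regularity of $\partial\Omega$; since the appendix assumes $\Omega$ smooth throughout, this is immaterial, but if you wanted that claim for rough domains you would fall back on the paper's monotonicity argument.
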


\begin{proof}
Since we know that $\mu$ is decreasing and that for each $\mu<0$ there exists a solution, we must have $\mu(\alpha)\to -\infty$. Moreover, $\lambda \leq -\alpha\to -\infty$.

Next we are going to show that, under the assumption $\partial\Omega$ smooth,
\begin{equation}\label{eq:last_auxiliary2}
\int_\Omega u^{p+1}\to |\Omega|^{-(p-1)/2}.
\end{equation}
To this aim notice that, by the uniqueness proved in the previous proposition, $u$ satisfies
\begin{equation*}
J_{\mu,0}(u)=\min\left\{ J_{\mu,0}(\varphi):\ \varphi\in H^1_0(\Omega),\ \int_\Omega\varphi^2\,dx=1 \right\}.
\end{equation*}
Setting, for $x\in\Omega$, $d(x):=\text{dist}(x,\partial\Omega)$, we construct a competitor function for the energy $J_{\mu,0}(u)$ as follows
\[
\varphi_\mu(x)=\left\{ \begin{array}{ll}
k^{-1}|\Omega|^{-1/2} \quad &\text{if } d(x)\geq (-\mu)^{-1/2} \\
k^{-1}|\Omega|^{-1/2} (-\mu)^{1/2} d(x) \quad &\text{if } 0\leq d(x)\leq (-\mu)^{-1/2},
\end{array}\right.
\]
where $k$ is such that $\|\varphi_\mu\|_{L^2(\Omega)}=1$. With the aid of the coarea formula, and using the fact that $\partial\Omega$ is smooth, it is possible to check that $k=1+\text{O}((-\mu)^{-1/2})$, and thus
\begin{equation}\label{eq:last_auxiliary}
\int_\Omega |\nabla \varphi_\mu|^2\, dx=\textrm{O}(\sqrt{-\mu}),\qquad \int_\Omega \left(\varphi_\mu^q-|\Omega|^{-q/2}\right)\, dx=\textrm{O}((-\mu)^{-1/2})
\end{equation}
for every $q>1$. By rewriting $J_{\mu,0}$ in the following form
\[
J_{\mu,0}(\varphi)=\int_\Omega \left\{ \frac{|\nabla\varphi|^2}{2}-
\frac{\mu}{p+1}\left(|\varphi|^{p+1}-|\Omega|^{-(p+1)/2}\right)\right\}\,dx -\frac{\mu}{p+1}|\Omega|^{-(p-1)/2},
\]
and by using the estimates \eqref{eq:last_auxiliary} with $q=p+1$, we obtain
\[
J_{\mu,0}(u) \leq J_{\mu,0}(\varphi_\mu) = \textrm{O}(\sqrt{-\mu}) -\frac{\mu}{p+1}|\Omega|^{-(p-1)/2},
\]
so that
\[
0\leq \int_\Omega \left(u^{p+1}-|\Omega|^{-(p+1)/2}\right)\,dx \leq \textrm{O}((-\mu)^{-1/2}) \to 0
\]
(by using Lemma \ref{lemma:U_not_empty_and_compact}-(iv)), so that \eqref{eq:last_auxiliary2} is proved.

Now, for each $L^2$--normalized $\varphi$, we rewrite $J_{\mu,0}(\varphi)$ as
\begin{multline*}
J_{\mu,0}(\varphi)=\int_\Omega\left\{ \frac{|\nabla \varphi|^2}{2}-\frac{\mu}{p+1}\left(|\varphi|^{(p+1)/2}-|\Omega|^{-(p+1)/4}\right)^2\right\}\, dx\\
-\frac{2\mu}{p+1}|\Omega|^{-(p+1)/4}\int_\Omega\left(|\varphi|^{(p+1)/2}-|\Omega|^{-(p+1)/4}\right)\, dx -\frac{\mu}{p+1}|\Omega|^{-(p-1)/2}.
\end{multline*}
Reasoning as before (using this time \eqref{eq:last_auxiliary} for $q=(p+1)/2$), one shows that
\begin{multline*}
\int_\Omega \left(|u|^{(p+1)/2}-|\Omega|^{-(p+1)/4}\right)^2\, dx\\
+2|\Omega|^{-(p+1)/2}\int_\Omega \left(|u|^{(p+1)/2}-|\Omega|^{-(p+1)/2}\right)\, dx\leq \textrm{O}((-\mu)^{-1/2}).
\end{multline*}
If $p\geq 3$, by H\"older inequality we have that the second integral in the l.h.s. above is nonnegative, while for $p<3$ it tends to 0 as $\alpha\to +\infty$. The latter statement is a consequence of both H\"older and interpolation inequalities, which provide
\[
\int_\Omega u^{(p+1)/2}\,dx \leq |\Omega|^{(3-p)/4}, \qquad
\|u\|_{L^{(p+1)/2}(\Omega)}\geq \|u\|_{L^{p+1}(\Omega)}^{(p-3)/(p-1)},
\]
as well as of \eqref{eq:last_auxiliary2}. Thus we have concluded that
$$
u^{(p+1)/2}\to |\Omega|^{-(p+1)/4} \qquad \text{ in } L^2(\Omega).
$$
In particular, up to a subsequence, $u\to |\Omega|^{-1/2}$ a.e. and there exists $h\in L^2$ (independent of $\alpha$) so that $|u|^{(p+1)/2}\leq h$. We can now conclude by applying Lebesgue's Dominated Convergence Theorem.

To proceed with the proof notice that, from the equality $\alpha+\lambda=\mu\int_\Omega u^{p+1}\,dx$ and Lemma \ref{lemma:U_not_empty_and_compact} (iv), we deduce
\begin{equation}\label{eq:defocusing_lambda_over_mu}
\lambda \leq \mu |\Omega|^{-(p-1)/2}.
\end{equation}
On the other hand, we have
\[
-\lambda \leq (p+1)J_{\mu,0}(u)\leq (p+1)J_{\mu,0}(\varphi_\mu) \leq C(-\mu)^{1/2} -\mu |\Omega|^{-(p-1)/2}.
\]
Dividing the last inequality by $-\mu$ and letting $\mu\to -\infty$ we obtain
\[
\limsup \frac{\lambda}{\mu} \leq |\Omega|^{-(p-1)/2},
\]
which together with \eqref{eq:defocusing_lambda_over_mu} provides the convergence of $\l/\mu$.

The last part of the statement is obtained by combining the previous asymptotics with the identity $\a/\mu=-\l/\mu+\int_\Omega u^{p+1}\,dx$.
\end{proof}

\section*{Acknowledgments} B. Noris and G. Verzini are partially supported by the PRIN2009 grant ``Critical Point Theory and Perturbative Methods for Nonlinear Differential Equations''. H. Tavares is partially supported by Funda\c c\~ao para a Ci\^encia e a Tecnologia, PEst-OE/MAT/UI0209/2013.


\begin{thebibliography}{10}

\bibitem{agrawal2000}
Agrawal, G.~P.: \emph{Nonlinear fiber optics}.
\newblock Springer (2000)

\bibitem{FibichMerle2001}
Fibich, G. and Merle, F.: Self-focusing on bounded domains.
\newblock \emph{Phys D} 155, 132--158 (2001)

\bibitem{BartschParnet}
Bartsch, T. and Parnet, M.: Nonlinear Schr{\"o}dinger equations near an
  infinite well potential.
\newblock \emph{arxiv:12051345}  (2012)

\bibitem{CazenaveLions1982}
Cazenave, T. and Lions, P.-L.: Orbital stability of standing waves for some
  nonlinear {S}chr\"odinger equations.
\newblock \emph{Comm Math Phys} 85, 549--561 (1982)

\bibitem{Cazenave2003}
Cazenave, T.: \emph{Semilinear {S}chr\"odinger equations}, vol.~10 of
  \emph{Courant Lecture Notes in Mathematics}.
\newblock New York University Courant Institute of Mathematical Sciences, New
  York (2003).
\newblock ISBN 0-8218-3399-5

\bibitem{Fukuizumi2012}
Fukuizumi, R., Selem, F.~H., and Kikuchi, H.: Stationary problem related to the
  nonlinear Schr{\"o}dinger equation on the unit ball.
\newblock \emph{Nonlinearity} 25, 2271 (2012)

\bibitem{AdamiNojaVisciglia2012}
Adami, R., Noja, D., and Visciglia, N.: Constrained energy minimization and
  ground states for NLS with point defects.
\newblock \emph{arxiv:12046344}  (2012)

\bibitem{BerestyckiLions1983}
Berestycki, H. and Lions, P.-L.: Nonlinear scalar field equations. {I}.
  {E}xistence of a ground state.
\newblock \emph{Arch Rational Mech Anal} 82, 313--345 (1983)

\bibitem{Kwong1989}
Kwong, M.~K.: Uniqueness of positive solutions of {$\Delta u-u+u^p=0$} in
  {${\bf R}^n$}.
\newblock \emph{Arch Rational Mech Anal} 105, 243--266 (1989)

\bibitem{GrillakisShatahStrauss}
Grillakis, M., Shatah, J., and Strauss, W.: Stability theory of solitary waves
  in the presence of symmetry, I.
\newblock \emph{Journal of Functional Analysis} 74, 160--197 (1987)

\bibitem{AmbrosettiProdipaper}
Ambrosetti, A. and Prodi, G.: On the inversion of some differentiable mappings
  with singularities between {B}anach spaces.
\newblock \emph{Ann Mat Pura Appl (4)} 93, 231--246 (1972)

\bibitem{AmbrosettiProdiBook}
Ambrosetti, A. and Prodi, G.: \emph{A primer of nonlinear analysis}, vol.~34 of
  \emph{Cambridge Studies in Advanced Mathematics}.
\newblock Cambridge University Press, Cambridge (1993).
\newblock ISBN 0-521-37390-5

\bibitem{GidasNiNirenberg1979}
Gidas, B., Ni, W.~M., and Nirenberg, L.: Symmetry and related properties via
  the maximum principle.
\newblock \emph{Comm Math Phys} 68, 209--243 (1979)

\bibitem{KwongLi1992}
Kwong, M.~K. and Li, Y.: Uniqueness of radial solutions of semilinear elliptic
  equations.
\newblock \emph{Trans Amer Math Soc} 333, 339--363 (1992)

\bibitem{Zhang1992}
Zhang, L.~Q.: Uniqueness of positive solutions of {$\Delta u+u+u^p=0$} in a
  ball.
\newblock \emph{Comm Partial Differential Equations} 17, 1141--1164 (1992)

\bibitem{KabeyaTanaka1999}
Kabeya, Y. and Tanaka, K.: Uniqueness of positive radial solutions of
  semilinear elliptic equations in {$\bold R^N$} and {S}\'er\'e's
  non-degeneracy condition.
\newblock \emph{Comm Partial Differential Equations} 24, 563--598 (1999)

\bibitem{Korman2002}
Korman, P.: On uniqueness of positive solutions for a class of semilinear
  equations.
\newblock \emph{Discrete Contin Dyn Syst} 8, 865--871 (2002)

\bibitem{Tang2003}
Tang, M.: Uniqueness of positive radial solutions for {$\Delta u-u+u^p=0$} on
  an annulus.
\newblock \emph{J Differential Equations} 189, 148--160 (2003)

\bibitem{FelmerMartinezTanaka2008}
Felmer, P., Mart{\'{\i}}nez, S., and Tanaka, K.: Uniqueness of radially
  symmetric positive solutions for {$-\Delta u+u=u^p$} in an annulus.
\newblock \emph{J Differential Equations} 245, 1198--1209 (2008)

\bibitem{EspositoPetralla2011}
Esposito, P. and Petralla, M.: Pointwise blow-up phenomena for a {D}irichlet
  problem.
\newblock \emph{Comm Partial Differential Equations} 36, 1654--1682 (2011)

\bibitem{AftalionPacella2003}
Aftalion, A. and Pacella, F.: Uniqueness and nondegeneracy for some nonlinear
  elliptic problems in a ball.
\newblock \emph{J Differential Equations} 195, 380--397 (2003)

\bibitem{Weinstein1983}
Weinstein, M.~I.: Nonlinear {S}chr\"odinger equations and sharp interpolation
  estimates.
\newblock \emph{Comm Math Phys} 87, 567--576 (1982/83)

\bibitem{OrtegaVerzini2004}
Ortega, R. and Verzini, G.: A variational method for the existence of bounded
  solutions of a sublinear forced oscillator.
\newblock \emph{Proc London Math Soc (3)} 88, 775--795 (2004)

\bibitem{BergerFraenkel1969}
Berger, M.~S. and Fraenkel, L.~E.: On the asymptotic solution of a nonlinear
  {D}irichlet problem.
\newblock \emph{J Math Mech} 19, 553--585 (1969/1970)

\bibitem{BethuelBrezisHelein1993}
Bethuel, F., Brezis, H., and H{\'e}lein, F.: Asymptotics for the minimization
  of a {G}inzburg-{L}andau functional.
\newblock \emph{Calc Var Partial Differential Equations} 1, 123--148 (1993)

\bibitem{AndreShafrir1998}
Andr{\'e}, N. and Shafrir, I.: Minimization of a {G}inzburg-{L}andau type
  functional with nonvanishing {D}irichlet boundary condition.
\newblock \emph{Calc Var Partial Differential Equations} 7, 191--217 (1998)

\bibitem{Serfaty2001}
Serfaty, S.: On a model of rotating superfluids.
\newblock \emph{ESAIM Control Optim Calc Var} 6, 201--238 (electronic) (2001)

\end{thebibliography}

\noindent\verb"benedettanoris@gmail.com"\\
INdAM-COFUND Marie Curie Fellow \\
\noindent Laboratoire de Math\'ematiques, Universit\'e de Versailles-St Quentin, 45 avenue des \'Etats-Unis, 78035 Versailles cedex (France) \\

\noindent \verb"htavares@ptmat.fc.ul.pt"\\
CMAF/UL and FCUL, Av. Prof. Gama Pinto 2, 1649-003 Lisboa, Portugal \\

\noindent \verb"gianmaria.verzini@polimi.it"\\
Dipartimento di Matematica, Politecnico di Milano, p.za Leonardo da
Vinci 32,  20133 Milano, Italy

\end{document}